\numberwithin{equation}{section}
	\newtheorem{theorem}[equation]{Theorem}
	\newtheorem{lemma}[equation]{Lemma}
	\newtheorem{corollary}[equation]{Corollary}
	\newtheorem{proposition}[equation]{Proposition}
	\newtheorem*{theorem*}{Theorem}
\theoremstyle{definition}
	\newtheorem{definition}[equation]{Definition}
	\newtheorem{example}[equation]{Example}
        \newtheorem{assumption}[equation]{Assumption}
	\newtheorem{notation}[equation]{Notation}
\theoremstyle{remark}
	\newtheorem{remark}[equation]{Remark}
	\newtheorem*{remark*}{Remark}
\newcommand{\R}{\mathbb{R}}
\newcommand{\Z}{\mathbb{Z}}
\newcommand{\N}{\mathbb{N}}
\newcommand{\Q}{\mathbb{Q}}
\newcommand{\cat}[1]{\mathbf{#1}}
\newcommand{\Ab}{\cat{Ab}}
\newcommand{\RMod}{\cat{Mod_R}}
\newcommand{\xto}{\xrightarrow}
\newcommand{\from}{\leftarrow}
\newcommand{\xfrom}{\xleftarrow}
\newcommand{\incl}{\hookrightarrow}
\newcommand{\isomto}{\stackrel{\isom}{\to}}
\newcommand{\isom}{\cong}
\newcommand{\symmdiff}{\bigtriangleup}
\newcommand\abs[1]{\lvert#1\rvert}
\newcommand\norm[1]{\left\lVert#1\right\rVert}
\newcommand{\eps}{\varepsilon}
\newcommand{\tensor}{\otimes}
\DeclareMathOperator{\coker}{coker}	
\DeclareMathOperator{\Dgm}{Dgm}
\DeclareMathOperator{\cost}{cost}
\DeclareMathOperator{\pd}{pd}
\renewcommand{\coprod}{\bigoplus}
\begin{document}

\title{Exact weights, path metrics, and algebraic Wasserstein distances}

\author{Peter Bubenik}
\address[Peter Bubenik]{Department of Mathematics, University of Florida}
\email{peter.bubenik@ufl.edu}

\author{Jonathan Scott}
\address[Jonathan Scott]{Department of Mathematics, Cleveland State University}
\email{j.a.scott3@csuohio.edu}

\author{Donald Stanley}
\address[Donald Stanley]{Department of Mathematics, University of Regina}
\email{donald.stanley@uregina.ca}


\begin{abstract}
  We use weights on objects in an abelian category to define what we call a path metric. We introduce three special classes of weight: those compatible with short exact sequences; those induced by their path metric; and those which bound  their path metric. We prove that these conditions are in fact equivalent, and call such weights exact. As a special case of a path metric, we obtain a distance for generalized persistence modules whose indexing category is a measure space. We use this distance to define Wasserstein distances, which coincide with the previously defined Wasserstein distances for one-parameter persistence modules. For one-parameter persistence modules, we also describe maps to and from an interval module, and we give a matrix reduction for monomorphisms and epimorphisms.
\end{abstract}

\maketitle
    
\section{Introduction} \label{sec:intro}

In nice cases, one-parameter persistence modules are isomorphic to a direct sum of interval modules~\cite{Crawley-Boevey:2015,Botnan:2018} and they have a combinatorial description called a persistence diagram~\cite{cseh:stability,Patel:2018}.
Persistence diagrams have a family of $L^p$ distances, for $1 \leq p \leq \infty$, called \emph{$p$-Wasserstein distances}~\cite{csehm:lipschitz}. For $p=\infty$, this distance is also called the \emph{bottleneck distance}~\cite{cseh:stability}.
These distances have a common generalization with Wasserstein distances for probability measures~\cite{Divol:2019,Bubenik:2020a}.
The bottleneck distance for one-parameter persistence modules has an equivalent linear-algebra formulation called \emph{interleaving distance}~\cite{ccsggo:interleaving,Lesnick:2011,bubenikScott:1,bauerLesnick,Harker:2019}
which has been extended to various generalized persistence modules~
~\cite{Morozov:2013,bdss:1,deSilva:2016,deSilva:2017,Blumberg:2017,bdss:2,Munch:2018,Botnan:2020a}. 
However, from the metric point of view, these distances, being $L^{\infty}$ distances, are rather weak. Saying that two persistence modules are close in $p$-Wasserstein distance for $p<\infty$ is much stronger, with $1$-Wasserstein distance giving the strongest notion of proximity.

We generalize the $1$-Wasserstein distance for one-parameter persistence modules to abelian categories. If these abelian categories satisfy some additional standard axioms we also obtain a generalization of the $p$-Wasserstein distances.

For an abelian category $\cat{A}$ a \emph{weight} assigns each object $A \in \cat{A}$ an associated weight $w(A) \in [0,\infty]$ such that $w(0)=0$ and if $A \isom B$ then $w(A)=w(B)$.
For example, for a field $K$ and the category of $K$-vector spaces, we have the weight $w(A)$ given by the dimension of $A$.
For another example, for a ring $R$ and the category of left $R$-modules, we have the weight $w(M) = \pd(M)+1$, where $\pd(M)$ denotes the projective dimension of $M$.
Say that a weight $w$ is \emph{exact} (Definition~\ref{def:exact-weight}) if for each short exact sequence $0 \to A \to B \to C \to 0$, $w(A) \leq w(B) + w(C)$, $w(B) \leq w(A) + w(C)$, and $w(C) \leq w(A) + w(B)$.
Both of the previous two examples of weights are exact.

Given $A, B \in \cat{A}$, a \emph{zigzag} from $A$ to $B$ consists of a sequence of morphisms $\gamma: A = A_0 \xto{\gamma_1} A_1 \xfrom{\gamma_2} A_2 \xto{\gamma_3} \cdots \xfrom{\gamma_n} A_n = B$ for some $n \geq 0$. Define the \emph{cost} of a zigzag by
\begin{equation*}
  \cost_{w}(\gamma) = \sum_{i=1}^n \left( w(\ker\gamma_i(p))\ +  w(\coker\gamma_i(p)) \right),
\end{equation*}
and let
$d_{w}(A,B) = \inf_{\gamma} \cost_{w}(\gamma)$,
where the infimum is taken over all zigzags between $A$ and $B$ (Definition~\ref{def:dw-morphism}).
We show (Lemma~\ref{lem:path-metric}) that $d_{w}$ is a metric (Definition~\ref{def:metric})
which we call the \emph{path metric}.

Given a metric $d$ on $\cat{A}$, there is a weight given by $\abs{d}(A) = d(A,0)$ (Definition~\ref{def:wd}).
  Therefore, given a weight $w$, we obtain a sequence of weights $w_1,w_2,w_3,\ldots$ with $w_1=w$ and $w_{n+1} = \abs{d_{w_n}}$ for $n \geq 1$.
  We prove that $w_1 \geq w_2 \geq w_3 \geq \cdots$ (Lemma~\ref{lem:bound-weight}).
  This sequence \emph{stabilizes} 
  if there exists an $n \geq 1$ such that $w_{n+1}=w_n$.
We call a weight $w$ \emph{stable} if $\abs{d_w} = w$ (Definition~\ref{def:stable-weight}).

We prove that any weight provides an upper bound for its path metric: $d_w(A,B) \leq w(A) + w(B)$ (Proposition~\ref{prop:upper-bound}).
  We say that a weight \emph{bounds its path metric} if in addition $\abs{w(A) - w(B)} \leq d_w(A,B)$ (Definition~\ref{def:lower-bound}).

We prove that the three seemingly unrelated conditions on weights we have introduced are in fact equivalent.
  \begin{theorem}[Theorem~\ref{thm:equivalent-conditions}]
    For a weight $w$ the following are equivalent:
    \begin{itemize}
    \item $w$ is exact;
    \item $w$ is stable; and
    \item $w$ bounds its path metric.
    \end{itemize}
  \end{theorem}

We also show  (Definition~\ref{def:strengthening}) that for each weight there is a canonical exact weight and that for each exact weight there is a canonical \emph{amplitude}, a strengthening of our notion of exact weight introduced by Giunti et al~\cite{Giunti:2021c} (Definition~\ref{def:amplitude}).

A \emph{persistence module} indexed by a small category $\cat{P}$ and valued in $\cat{A}$ is a functor from $\cat{P}$ to $\cat{A}$ and a \emph{morphism} of persistence modules is a natural transformation.
For example, consider $(\R^n,\leq)$ or $(\Z^n,\leq)$ with the coordinatewise partial order, viewed as a category.
The category of such persistence modules and their morphisms is an abelian category.

To define a path metric on this category of persistence modules, we use one additional ingredient. We assume that the underlying set $P$ of the small category $\cat{P}$ has a measure $\mu$. For example, consider $\R^n$ with the Lebesgue measure or $\Z^n$ with the counting measure.
  Then a persistence module $M$ has an associated weight defined by $W(M) = \mu(w(M)) =  \int_P w(M)\ d\mu$ (Definition~\ref{def:W-mu-w}).
If $w$ is exact or an amplitude then so is $W$ (Lemmas \ref{lem:muw-exact} and \ref{lem:muw-amplitude}).
  Using this weight we obtain the path metric $d_W = d_{\mu \circ w}$.

We prove that exact weights may be used to bound the path metric for persistence modules.

\begin{theorem}[Proposition~\ref{prop:upper-bound-pm} and Theorem~\ref{thm:lower-bound-pm}]
  If $w$ is an exact weight and $w(M)$ and $w(N)$ are $\mu$-integrable then
    \begin{equation*}
      \int_P\abs{w(M)-w(N)}\, d\mu \leq d_W(M,N) \leq \int_P (w(M)+w(N))\,d\mu.
    \end{equation*}
\end{theorem}

Now assume that the persistence modules have values in a Grothendieck category (Section~\ref{sec:grothendieck-category}) such as $\cat{Vect_K}$, the category of vector spaces over a field $K$, 
or $\RMod$, 
the category of left $R$-modules 
for some ring $R$,
and that they have a decomposition into a direct sum of persistence modules with local endomorphism rings (Section~\ref{sec:krsa}).
Given a metric $d$ and $p \in [1,\infty]$, we define the associated \emph{$p$-Wasserstein distance}
(Definition~\ref{def:Wp}), 
\begin{equation*}
  W_p(d)(M,N) = \inf \norm{\{d(M_a,N_a)\}_{a \in A}}_p,
\end{equation*}
where the infimum is taken over all isomorphisms
$M \isom \bigoplus_{a \in A} M_a$ and $N \isom \bigoplus_{a \in A} N_a$, where each $M_a$ and $N_a$
is either $0$ or
has a local endomorphism ring and is thus indecomposable (Lemma~\ref{lem:indecomposable}). 
We show that $W_p(d)$ is a metric (Proposition~\ref{prop:Wpd-metric}), which has the following universal property.

\begin{theorem}(Theorem~\ref{thm:universal})
  The metric $W_p(d)$ is the largest $p$-subadditive metric that
is bounded above by $d$ on indecomposables.
\end{theorem}

For one-parameter persistence modules we prove the following two isometry theorems.

\begin{theorem}[Theorem~\ref{thm:Wp-isometry}]
For persistence modules indexed by the integers or the real numbers with values in $\cat{Vect_K}$, $W_p(d_W)$ agrees with the $p$-Wasserstein distance of the corresponding persistence diagrams (Section \ref{sec:wasserstein-pers-mod}).
\end{theorem}

\begin{theorem}[Theorem~\ref{thm:metrics-same}]
For persistence modules indexed by the integers or the real numbers with values in $\cat{Vect_K}$, $W_1(d_W)$ agrees with the path metric $d_W$.
\end{theorem}

As part of the proof we show that
monomorphisms and epimorphisms of one-parameter persistence modules have the following representations which imply that there is an induced matching of interval modules.

\begin{theorem}[Theorem \ref{thm:matching-mono}]
  A monomorphism between persistence modules given by finite direct sums of interval modules can be represented by a matrix in which blocks corresponding to interval modules with the same right end are diagonal.
\end{theorem}

\begin{theorem}[Theorems \ref{thm:matching-epi}]
  An epimorphism between persistence modules given by finite direct sums of interval modules can be represented by a matrix in which blocks corresponding to interval modules with the same left end are diagonal.
\end{theorem}

We generalize the following well-known important elementary result for nonzero maps between persistence modules.

\begin{lemma}[Lemma~\ref{lem:interval-nonzero-map}]
Nonzero maps between interval modules may be visualized as follows.
\begin{center}
  \begin{tikzpicture}[scale=0.25]
    \draw[draw=none] (-1,-1) -- (21,5);
    \draw[thick] (0,0) -- (12,0);
    \draw[thick] (8,4) -- (20,4);
    \draw[thick,->] (10,3) -- (10,1);
  \end{tikzpicture}
\end{center}
\end{lemma}

\begin{theorem}[Theorem~\ref{thm:coker}]
Nonzero maps from an interval module to a finite direct sum of interval modules may be visualized as follows.
\begin{center}
  \begin{tikzpicture}[scale=0.25]
    \draw[draw=none] (-1,-6) -- (21,5);
    \draw[thick] (8,4) -- (20,4);
    \draw[thick,->] (8.25,3) -- (8.25,1);
    \draw[thick] (7,0) -- (9,0);
    \draw[thick] (6,-1) -- (10,-1);
    \draw[thick] (5,-2) -- (11,-2);
    \draw[thick] (4,-3) -- (12,-3);
    \draw[thick] (3,-4) -- (13,-4);
    \draw[thick] (2,-5) -- (14,-5);
  \end{tikzpicture}
\end{center}
\end{theorem}

\begin{theorem}[Theorem~\ref{thm:ker}]
Nonzero maps from a finite direct sum of interval modules to an interval module may be visualized as follows.
\begin{center}
  \begin{tikzpicture}[scale=0.25]
    \draw[draw=none] (-1,-1) -- (21,10);
    \draw[thick] (6,9) -- (18,9);
    \draw[thick] (7,8) -- (17,8);
    \draw[thick] (8,7) -- (16,7);
    \draw[thick] (9,6) -- (15,6);
    \draw[thick] (10,5) -- (14,5);
    \draw[thick] (11,4) -- (13,4);
    \draw[thick,->] (11.75,3) -- (11.75,1);
    \draw[thick] (0,0) -- (12,0);
  \end{tikzpicture}
\end{center}
\end{theorem}

\subsubsection*{Open questions}

We have not addressed algorithms for computing our path metric or our algebraic Wasserstein distance, under suitable finiteness conditions~\cite{lesnickWright:rivet,Miller:2019}. 
For example, is there an effective algorithm for computing the distance $d_{W}$ between two finitely-presented two-parameter persistence modules?
Furthermore, for particular applications in which generalized persistence modules arise, one may ask whether or not our distances are stable.

\subsubsection*{Related work}

Patel~\cite{Patel:2018} defines persistence diagrams for functors on $(\R,\leq)$ (which are obtained from functors on $(\N,\leq)$ by a left Kan extension) to essentially small symmetric monoidal categories with images and more generally to essentially small abelian categories.
In the latter case one can apply the tools developed here.
Note that our metric $d_{W}$ is similar in spirit to the construction of the Grothendieck group of an abelian category. 
Also note that the distances considered in \cite{Patel:2018} and the follow-up paper by McCleary and Patel~\cite{McCleary:2018} (interleaving distance, erosion distance, and bottleneck distance) are $L^{\infty}$ distances.
Elchesen and Memoli~\cite{Elchesen:2018} define a distance for zigzag persistence modules (the reflection distance) that is similar to our  metric $d_{W}$.
Related recent papers on the algebra of persistence modules include~\cite{Harrington:2017,Bauer:2020,Miller:2019,Miller:2020,Miller:2020c,BubenikMilicevic}.
The first author and Elchesen have also shown a universality result for Wasserstein distance for persistence diagrams~\cite{be:universality}.

Skraba and Turner~\cite{SkrabaTurner} have
independently defined an algebraic $p$-Wasserstein distance for pointwise-finite-dimensional one-parameter persistence modules and showed that for diagrams with finite total $p$-persistence it is isometric to the usual $p$-Wasserstein of the corresponding persistence diagrams. 

Scolamiero et al~\cite{Scolamiero:2017} define what they call a noise system for tame persistence modules indexed by $(\Q^r,\leq)$ and valued in $\cat{Vect_K}$ and use it to define a path metric.
Giunti, Nolan, Otter, and Waas~\cite{Giunti:2021c} have defined axioms for a weight on an abelian category which they call an amplitude. Their requirements are closely related to our conditions for an exact weight, but are more restrictive.
They observe that noise systems generalize to abelian categories and
they prove that an abelian category with an amplitude is equivalent to an abelian category with a noise system.
Thus, exact weights may be considered to be generalizations of amplitudes and noise systems.
For the path metric on noise systems and amplitudes,
it is sufficient to consider zigzags which are cospans (or spans)~\cite{Scolamiero:2017,Giunti:2021c}.

In Section~\ref{sec:matching} we show that for monomorphisms and epimorphisms of persistence modules there is an induced algebraic matching of interval modules.
Compare this with the induced combinatorial matchings of Bauer and Lesnick~\cite[Theorem 4.2]{bauerLesnick} and the related result by Skraba and Vejdemo Johansson~\cite[Remark 4.4]{bauerLesnick}.
A closely related result has been proved by Ezra Miller~\cite[Remark 9.24]{Miller:2020c}.
Miller's result holds in greater generality, though in our case his result is perhaps slightly weaker or at least less explicit.
Our proof is elementary, using a matrix reduction argument.

\subsubsection*{Outline of the paper}

Section~\ref{sec:background} consists of background material.
In Section~\ref{sec:weight-path-metric} we define weights and path metrics and study exact weights and their properties.
In Section~\ref{sec:metric-from-measure} we define metrics for generalized persistence modules indexed by a measure space and consider some of their properties.
In Section~\ref{sec:wasserstein} we define Wasserstein distances for persistence modules with values in a Grothendieck category, prove that it extends the usual definition, and establish a universal property.
In Section~\ref{sec:agreement}
we show that for one-parameter persistence modules our algebraic $1$-Wasserstein distance agrees with the path metric.
We also prove structure theorems for maps into and out of an interval module and show that monomorphisms and epimorphisms of persistence modules
can be represented by matrices whose form induces a matching of interval modules.
Finally, in Section~\ref{sec:applications}, we apply our metrics to three examples of two-parameter persistence modules and a pair of zigzag persistence modules.
 
\section{Background} \label{sec:background}

In this section we give background material that will be used later. 

\subsection{Additive categories}
\label{sec:additive-category}

A \emph{zero object} in a category is an object $0$ such that for every object $X$ there are unique morphisms $0 \to X$ and $X \to 0$.
In a category with a  zero object, for any two objects $A,B$ there is a unique \emph{zero morphism} given by the composition $A \to 0 \to B$.
An \emph{additive category} is one that is enriched in abelian groups (i.e. hom sets are abelian groups, and composition of morphisms is biadditive) and that has all finite products and a \emph{zero object}.

Let $\cat{A}$ be an additive category.
We say that $X$ is the \emph{direct sum} of $Y$ and $Z$ in $\cat{A}$ if there are morphisms $i : Y \rightarrow X$, $j : Z \rightarrow X$, $p : X \rightarrow Y$, and $q : X \rightarrow Z$ such that $ip + jq = 1_{X}$, $pi = 1_{Y}$, and $qj = 1_{Z}$.
Thus $p$ and $q$ are epimorphisms,
$i$ and $j$ are monomorphisms, and we consider $Y$ and $Z$ to be subobjects of $X$.
We write $X \cong Y \oplus Z$.
One can show that $qi = 0$ and $pj = 0$, from which it is easy to deduce that $i$ and $j$ determine an isomorphism $X \cong Y \amalg Z$, and that $p$ and $q$ determine an isomorphism $X \cong Y \times Z$.
An object $X \in \cat{A}$ is \emph{indecomposable} if $X \cong Y \oplus Z$ implies that either $Y$ or $Z$ is $0$.
See Krause~\cite{Krause:2015} for more details.

In an additive category $\cat{A}$, the \emph{kernel} of a morphism $f:A \to B$, if it exists, is the equalizer of $f$ and the zero morphism between $A$ and $B$.
Dually, the \emph{cokernel} of $f$, if it exists, is the coequalizer of $f$ and the zero morphism.

\subsection{Abelian categories}
\label{sec:abelian-category}

  An additive category is \emph{abelian} if it has all kernels and cokernels, and if for every $f : M \rightarrow N$, the induced morphism $\bar{f}$ in the natural factorization,
\[
	\begin{tikzcd}[row sep=tiny]
		\ker{f} \arrow[r,"j"]
			& M \arrow[r,"f"]
				\arrow[d]
			& N \arrow[r,"q"] 
			& \coker{f} \\
			& \coker{j}
				\arrow[r,"\bar{f}"]
			& \ker{q}
				\arrow[u]
	\end{tikzcd}
\]
is an isomorphism.
Note that $\ker q$ is called the image of $f$ and $\coker j$ is called the coimage of $f$.

Let $R$ be a commutative ring (with identity).
Then the category $\RMod$ of $R$-modules and $R$-module homomorphisms is an abelian category. As a special case, 
  let $K$ be a field.
  The category $\cat{Vect_K}$ of vector spaces over $K$ and $K$-linear maps is an abelian category.
  If $\cat{A}$ is an abelian category and $\cat{D}$ is a small category then the category $\cat{A}^{\cat{D}}$, of functors from $\cat{D}$ to $\cat{A}$ and natural transformations, is an abelian category.

\subsection{Grothendieck categories}
\label{sec:grothendieck-category}

An \emph{AB5 category} is an abelian category with all coproducts (and hence all colimits) in which filtered colimits of exact sequences are exact.
A \emph{Grothendieck category} is an AB5 category which has a generator (i.e. separator).

For example,
for any unital ring $R$, the category $\RMod$ of left $R$-modules and $R$-module homomorphisms is a Grothendieck category.
This includes the cases $\cat{Vect_K}$ (where $R$ is a field $K$) and $\Ab$ the category of abelian groups and group homomorphisms (where $R=\Z$).
Let $\cat{P}$ be a small category. For any Grothendieck category $\cat{A}$, the category $\cat{A}^{\cat{P}}$ is a Grothendieck category. In particular, $\cat{Vect_K}^{\cat{P}}$ is a Grothendieck category.

Let $\cat{A}$ be a Grothendieck category.
For an arbitrary set $A$ and a collection of objects $\{M_a\}_{a \in A}$ in $\cat{A}$, by definition we have the direct sum (i.e. coproduct) $\bigoplus_{a \in A}M_a$, and canonical maps $i_a:M_a \to \bigoplus_{a \in A}M_a$ for all $a \in A$.
It follows from the Gabriel-Popescu Theorem that $\cat{A}$ also has all limits~\cite[Chapter X]{Stenstrom:1975}, and thus products, in particular.
Therefore, we have the product $\prod_{a \in A} M_a$.
For $a,b \in A$ define $\tau_{a,b}:M_a \to M_b$ to be the identity on $M_a$ if $a=b$ and to be the zero map otherwise. For $b \in A$ the maps $\tau_{a,b}$ induce a canonical projection map $p_b: \bigoplus_{a \in A}M_a \to M_b$.
These maps induce a canonical map $\bigoplus_{a \in A}M_a \to \prod_{a \in A}M_a$.

\subsection{Krull-Remak-Schmidt-Azumaya Theorem}
\label{sec:krsa}

An element $r$ in a ring $R$ is a \emph{nonunit} if $Rr \neq R$ and $rR \neq R$.
A \emph{local ring} is a ring in which the sum of two nonunits is a nonunit. 

\begin{lemma}
  \label{lem:indecomposable}
  Let $\cat{A}$ be an abelian category. 
If $M \in \cat{A}$ has a local endomorphism ring, then $M$ is indecomposable.
\end{lemma}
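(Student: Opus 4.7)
The plan is to show the contrapositive in effect: any nontrivial direct sum decomposition of $M$ produces a pair of nonunit idempotents in $\End(M)$ summing to the identity, contradicting locality.

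First, suppose $M \cong Y \oplus Z$ with structure morphisms $i : Y \to M$, $j : Z \to M$, $p : M \to Y$, $q : M \to Z$ as in Section~\ref{sec:additive-category}, so that $ip + jq = 1_M$, $pi = 1_Y$, and $qj = 1_Z$. Set $e_1 = ip$ and $e_2 = jq$ in the ring $R = \End(M)$. Using $qi = pj = 0$ (noted in the excerpt), these are idempotents with $e_1 + e_2 = 1_M$.

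Now apply the hypothesis that $R$ is local. Since $1_M$ is a unit and a local ring cannot express a unit as the sum of two nonunits, at least one of $e_1, e_2$ is a unit; say $e_1$ is a unit. Then $e_1 = e_1^{-1} e_1^2 = e_1^{-1} e_1 = 1_M$, so $e_2 = 1_M - e_1 = 0$, i.e.\ $jq = 0$. Consequently
\begin{equation*}
   j = j \circ 1_Z = j \circ (qj) = (jq) \circ j = 0,
\end{equation*}
and since $j$ is a monomorphism, $1_Z$ factors through $j = 0$, forcing $1_Z = 0$ and hence $Z = 0$. (Symmetrically, if $e_2$ is the unit, then $Y = 0$.) Thus any direct sum decomposition of $M$ is trivial, so $M$ is indecomposable.

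The only point requiring care is the step from $jq = 0$ to $Z = 0$; everything else is a direct manipulation of the idempotent identities. In particular, no appeal to splittings of arbitrary idempotents is needed, because the decomposition $M \cong Y \oplus Z$ is already given at the outset.
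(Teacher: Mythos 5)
Your proof is correct and takes essentially the same approach as the paper: both arguments use the idempotents $e_1 = ip$ and $e_2 = jq$ summing to $1_M$ and play them off against the locality of $\End(M)$. The paper's proof is a one-liner asserting that in a nontrivial decomposition $i_1p_1$ and $i_2p_2$ are nonunits while their sum is not; you have simply unwound the contrapositive and supplied the routine details --- that a unit idempotent equals the identity, and that $jq = 0$ together with $qj = 1_Z$ and $j$ monic forces $Z = 0$.
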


\begin{proof}
  Assume $M \isom M_1 \oplus M_2$, with corresponding maps $i_1, p_1, i_2, p_2$. Then $i_1p_1$ and $i_2p_2$ are nonunits but their sum is not.
\end{proof}

\begin{theorem}[Krull-Remak-Schmidt-Azumaya Theorem]\cite[Section 6.7]{Bucur:1968},
\cite[Section 4.8]{Pareigis:1970},
\cite[Section 5.1]{Popescu:1973}.
 \label{thm:krsa}
  Let $\cat{A}$ be an AB5 category and $M \in \cat{A}$. If
  \begin{equation*}
    M \isom \displaystyle\coprod_{i \in I} A_i \isom \coprod_{j \in J} B_j,
  \end{equation*}
where each $A_i$ and $B_j$ has a local endomorphism ring, then there is a bijection $\varphi: I \to J$ such that for all $i \in I$, $A_i \isom B_{\varphi(i)}$.
\end{theorem}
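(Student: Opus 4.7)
The plan is to build the bijection $\varphi \colon I \to J$ via a transfinite exchange argument. The core ingredient is that objects with local endomorphism rings enjoy a strong exchange property: if $A$ is an indecomposable summand of $M$ and $M = \bigoplus_j B_j$, then there is some $j$ with $A \isom B_j$ such that replacing $A$ with $B_j$ yields another decomposition of $M$. Iterating this via Zorn's lemma produces the desired bijection.

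For the single-summand step, fix $i \in I$ and let $\iota_i \colon A_i \to M$, $\pi_i \colon M \to A_i$ be the structure maps of the first decomposition, with $\iota'_j, \pi'_j$ the analogous maps for the second. Using the local endomorphism ring of $A_i$ together with the AB5 property, one shows that $\pi_i \iota'_j \pi'_j \iota_i \in \operatorname{End}(A_i)$ is a unit for some $j \in J$; the idea is that these endomorphisms play the role of summands of $1_{A_i}$, and sums of nonunits in a local ring are nonunits. Granted this, $u := \pi_i \iota'_j \pi'_j \iota_i$ being a unit means $u^{-1}\pi_i \iota'_j$ retracts $\pi'_j \iota_i$, so $\pi'_j \iota_i \colon A_i \to B_j$ is a split monomorphism. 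Since $B_j$ is indecomposable (by the preceding lemma), $\pi'_j \iota_i$ is in fact an isomorphism $A_i \isom B_j$.

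To globalize, I would apply Zorn's lemma to the poset of pairs $(I_0, \varphi_0)$ where $I_0 \subseteq I$, $\varphi_0 \colon I_0 \to J$ is an injection, and there is a compatible decomposition $M \isom \bigl(\bigoplus_{i \in I_0} A_i\bigr) \oplus \bigl(\bigoplus_{j \notin \varphi_0(I_0)} B_j\bigr)$ matching the given data on each summand, with $A_i \isom B_{\varphi_0(i)}$ for each $i \in I_0$. Chains have upper bounds by AB5, since directed colimits preserve the exchange data. A maximal element must have $I_0 = I$: otherwise the single-summand step, applied to the residual decomposition at any unmatched $i$, yields a proper extension; the symmetric argument forces surjectivity.

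The main obstacle is precisely the infinite case: making rigorous the informal claim that ``$1_{A_i}$ factors as a finite sum through the $B_j$,'' and ensuring that the exchange data survives directed unions in the Zorn step. Both rely on exactness of directed colimits in an AB5 category; once these are secured, the arithmetic of nonunits in local rings closes the argument.
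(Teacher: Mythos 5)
The paper does not supply its own proof of Theorem~\ref{thm:krsa}; it merely cites Bucur--Deleanu, Pareigis, and Popescu for the statement. So there is no in-paper proof to compare your attempt against, and I will instead assess your sketch against the standard textbook arguments referenced there.

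Your global strategy---an exchange lemma for a single summand followed by a Zorn's lemma bookkeeping step to produce the bijection---is exactly the standard architecture of Azumaya-style proofs, so the shape is right. The problem is that the single-summand step is where virtually all the content of the theorem lives, and your argument for it does not go through as written. In a general AB5 category the identity $1_{A_i}$ is \emph{not} the sum $\sum_{j \in J} \pi_i \iota'_j \pi'_j \iota_i$, because $J$ may be infinite and there is no convergence notion that makes such an infinite sum equal a morphism. Your ``sums of nonunits in a local ring are nonunits'' step therefore has nothing to which it can be applied. You flag this honestly as ``the main obstacle,'' but then assert that ``exactness of directed colimits'' resolves it, without showing how. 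That assertion is where the gap sits: naive AB5 exactness gives you that $A_i$ is the directed union of the pullbacks $\iota_i^{-1}(M_S)$ over finite $S \subseteq J$, but it does \emph{not} give you that $\iota_i$ factors through some finite sub-coproduct $M_S$, nor that some finite partial sum $\sum_{j \in S} \pi_i \iota'_j \pi'_j \iota_i$ is a unit, unless $A_i$ enjoys some compactness property that is not assumed. Closing this gap is precisely the technical heart of the cited proofs, which do not simply invoke AB5 exactness but run a more delicate argument (a maximality argument in the lattice of direct summands, or an analysis of idempotents, depending on the source).

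The Zorn's lemma step also needs more than you give it. You claim chains ``have upper bounds by AB5, since directed colimits preserve the exchange data,'' but what actually has to be verified is that the union of an increasing chain of partial internal direct sum decompositions is again an internal direct sum decomposition of $M$. That does follow from AB5, but it is a nontrivial check, and your matching $\varphi_0$ must also be shown to extend coherently along the chain. As it stands, the proposal correctly outlines the skeleton of the standard proof and correctly locates the hard part, but does not actually prove the hard part; it is an outline, not a proof.
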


\begin{definition} \label{def:subcat}
  For a Grothendieck category $\cat{A}$, let $\cat{A_{\ell}}$ denote the full additive subcategory of $\cat{A}$ whose objects are those objects of $\cat{A}$ that are isomorphic to a direct sum of objects with a local endomorphism ring.
\end{definition}

\subsection{Persistence modules}
\label{sec:pers-mod}


Let $\cat{P}$ be a small category and let $\cat{A}$ be an abelian category.
Functors $M: \cat{P} \to \cat{A}$ are called \emph{persistence modules} indexed by $\cat{P}$ with values in $\cat{A}$. Natural transformations of such functors are called \emph{morphisms of persistence modules}.
Of particular interest are the cases that $\cat{A}$ is $\RMod$ or its special case $\cat{Vect_K}$.
Let $P$ denote the set of objects of $\cat{P}$.
For a persistence module $M: \cat{P} \to \cat{Vect_K}$ 
the \emph{dimension vector} or \emph{Hilbert function} for $M$ is the function $\dim M: P \to [0,\infty]$ given by $p \mapsto \dim M(p)$.

Among persistence modules with values in $\cat{Vect_K}$, of greatest interest is the case where $P \subseteq \R^d$ for some $d$ and the morphisms are given by the coordinate-wise/product partial order: $(x_1,\ldots,x_d) \leq (y_1,\ldots,y_d)$ iff $x_i \leq y_i$ for all $1 \leq i \leq d$.
When $d \geq 2$ these are called \emph{multi-parameter persistence modules} and when $d=1$ these are called \emph{one-parameter persistence modules} or just \emph{persistence modules}.

\begin{definition} \label{def:interval-module}
  Let $P$ be a poset. A subset $C \subseteq P$ is \emph{convex} if for all $p \leq q \leq r$ with $p,r \in C$, we have $q \in C$.
    A subset $C \subseteq P$ is \emph{connected} if for each $p,q \in C$ there is a sequence $p = p_0, p_1,\ldots, p_n=q$ in $C$ such that for each $1 \leq j \leq n$, either $p_{j-1} \leq p_j$ or $p_j \leq p_{j-1}$.
    An \emph{interval} in $P$ is a convex connected subset. Note that if $P$ is totally ordered then an interval is just a convex subset.
  Let $I$ be an interval in $P$. Define a persistence module $M$ indexed by $P$ with values in $\cat{Vect_K}$ as follows. For each $p \in P$, let $M(p) = K$ if $p \in I$ and $M(p)= 0$ if $p \not\in I$. For $p \leq q$ with $p,q \in I$, let $M(p \leq q)$ be the identity map on $K$. All other maps $M(p \leq q)$ are zero, since either the domain or codomain is zero. Call $M$ an \emph{interval module} and it is convenient to abuse notation and denote $M$ by $I$.
\end{definition}

\begin{lemma} \label{lem:interval-module}
  Each interval module has a local endomorphism ring and is thus indecomposable. 
\end{lemma}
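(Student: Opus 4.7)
The plan is to compute the endomorphism ring of an interval module $M$ associated to an interval $I \subseteq P$ explicitly, show that this ring is isomorphic to the field $K$, observe that any field is a local ring, and then invoke the preceding lemma to conclude indecomposability.

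First, I would unpack what a natural transformation $\phi: M \to M$ looks like. For each $a \in I$ the component $\phi_a$ is a $K$-linear endomorphism of $K$, hence multiplication by some scalar $c_a \in K$; for $a \notin I$, $M(a) = 0$, so $\phi_a = 0$ is forced. The naturality squares for pairs $a \leq b$ with both of $a,b$ outside $I$, or with exactly one outside, are automatically zero on at least one side and impose no constraint.

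The key step is to exploit naturality on pairs $a \leq b$ with $a,b \in I$. Here $M(a \leq b) = \id_K$, so the naturality square forces $c_b = c_a$. Since $I$ is convex in a totally ordered $P$, any two elements of $I$ can be directly compared, so all the scalars $c_a$ coincide with a common value $c \in K$. This produces a ring isomorphism $\End(M) \xrightarrow{\isom} K$, $\phi \mapsto c$, with inverse sending $c$ to the natural transformation that is multiplication by $c$ on $M(a)$ for $a \in I$. Since $K$ is a field, its only nonunit is $0$, and the sum of two zeros is zero, so $K$ (and therefore $\End(M)$) is a local ring. Indecomposability of $M$ then follows from the lemma immediately preceding the statement.

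The proof is quite short and the only potential pitfall is the bookkeeping about comparability: one must check that, within an interval of a totally ordered set, naturality forces equality of scalars on all components, not merely on comparable pairs. This is immediate in the setting $P \subseteq \R$ considered here, but would require more care for intervals in a general poset indexing category, where connectivity of the interval in the Hasse sense would replace total ordering.
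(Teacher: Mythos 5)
Your proposal is correct and takes essentially the same route as the paper: the paper's proof is simply the one-line observation that $\End(M) \isom K$, which you have unpacked in full by tracking components of a natural transformation and using naturality across comparable points of the interval. The comparability remark at the end is a sensible caveat, but since $P \subseteq \R$ here it does not affect the argument.
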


\begin{proof}
  The endomorphism ring of an interval module, which by definition has values in $\cat{Vect_K}$, is isomorphic to $K$.
\end{proof}

\subsection{$p$-Norms}
\label{sec:p-norm}

It is customary to restrict $p$-norms to those elements for which they have a finite value; we will not do so.
Let $x = \{x_a\}_{a \in A}$, where each $x_a \in [0,\infty]$.
Then for $1\leq p < \infty$, let
$\norm{x}_p = ( \sum_{a \in A} \abs{x_a}^p )^{\frac{1}{p}}$
and 
$\norm{x}_{\infty} = \sup_{a \in A} \abs{x_a}$.

\begin{lemma}
  Let $A$ and $B$ be disjoint indexing sets.
  Let $x = \{x_a\}_{a \in A}$, $y=\{x_b\}_{b \in B}$ and $z=\{x_c\}_{c \in A \cup B}$. 
Then for $1 \leq p \leq \infty$,
$\norm{ \left( \norm{x}_p, \norm{y}_p \right) }_p = \norm{z}_p$.
\end{lemma}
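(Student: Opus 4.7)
The plan is to verify the identity by direct computation, splitting into the two cases $p<\infty$ and $p=\infty$ that are handled by the piecewise definition of the $p$-norm. No clever argument is required: this is just the observation that the $p$-th power of the $p$-norm (resp.\ supremum for $p=\infty$) is additive (resp.\ takes max) over disjoint index sets, together with the definition applied to the two-element tuple $(\norm{x}_p, \norm{y}_p)$.

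For $1 \leq p < \infty$, I would start from
\begin{equation*}
  \norm{z}_p^p \;=\; \sum_{c \in A \cup B} \abs{x_c}^p \;=\; \sum_{a \in A} \abs{x_a}^p + \sum_{b \in B} \abs{x_b}^p \;=\; \norm{x}_p^p + \norm{y}_p^p,
\end{equation*}
where the middle equality uses the disjointness assumption $A \cap B = \varnothing$ so that the sum over $A \cup B$ splits. On the other hand, applying the definition to the two-term tuple $(\norm{x}_p, \norm{y}_p)$ gives
\begin{equation*}
  \norm{ \left( \norm{x}_p, \norm{y}_p \right) }_p^p \;=\; \norm{x}_p^p + \norm{y}_p^p.
\end{equation*}
Taking $p$-th roots yields the claimed equality. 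The argument goes through without restriction on finiteness, using the conventions $\infty + t = \infty$ and $\infty^{1/p} = \infty$.

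For $p = \infty$, I would argue similarly that
\begin{equation*}
  \norm{z}_{\infty} \;=\; \sup_{c \in A \cup B} \abs{x_c} \;=\; \max\Bigl(\sup_{a \in A} \abs{x_a},\, \sup_{b \in B} \abs{x_b}\Bigr) \;=\; \max(\norm{x}_{\infty}, \norm{y}_{\infty}),
\end{equation*}
which by definition equals $\norm{(\norm{x}_{\infty}, \norm{y}_{\infty})}_{\infty}$. There is essentially no obstacle here; the only thing to keep track of is the disjointness hypothesis (which makes the sum/sup split cleanly) and the convention that $p$-norms are permitted to take the value $\infty$, as announced at the start of this subsection.
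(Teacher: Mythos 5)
Your proof is correct. The paper leaves this lemma without proof (it is regarded as an immediate consequence of the definitions), and your direct computation — splitting the sum/supremum over the disjoint index sets and using the conventions for infinite values announced in that subsection — is exactly the intended argument.
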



\subsection{Persistence diagrams and their Wasserstein distances}
\label{sec:wasserstein-pers-mod}

Let $P \subseteq \R$, where $\R$ is given the usual total order.
  For an interval $I$ in $P$, let $P_{>I} = \{p \in P \ | \ \forall x \in I, x < p\}$.
For an interval module $I$ indexed by $P$, let $x(I) =
(\inf I, \inf P_{>I})
\in [-\infty,\infty]^2$, where $\inf \emptyset = \infty$.
For $x,y \in [-\infty,\infty]^2$,
let $d(x,y) = \norm{x-y}_1$.  
Let $\Delta \subset [-\infty,\infty]^{2}$ denote the diagonal, $\{(x,x) \ | \ -\infty \leq x \leq \infty\}$ and for
$x \in [-\infty,\infty]^2$, let $d(x,\Delta) := \inf_{y \in \Delta} d(x,y)$.
By a \emph{matching} between index sets $A$ and $B$, we mean an injection $\varphi : C \rightarrow B$, where $C \subset A$.

Let $P \subset \R$ and let $M$ be a persistence module indexed by $P$ with values in $\cat{Vect_K}$. Assume that $M \isom \coprod_{j \in J} I_j$ where each $I_j$ is an interval module.
By Lemma~\ref{lem:interval-module} and Theorem~\ref{thm:krsa}, there is a well-defined multiset $\Dgm M := \{x(I_{j})\}_{j \in J}$, called the \emph{persistence diagram} of $M$.

\begin{definition} \label{def:Wpq}
Let $1 \leq p \leq \infty$.
Let $M,N$ be persistence modules indexed by $P$ with values in $\cat{Vect_K}$ that have persistence diagrams $\Dgm M = \{x_a\}_{a \in A}$ and $\Dgm N = \{x'_b\}_{b \in B}$.
Define
  \begin{multline*}
    W_{p}(M,N) =\\ \inf_{\varphi : C \rightarrow B} \norm{ \left(
        \norm{ \left\{ d(x_c,x'_{\varphi(c)}) \right\}_{c \in C} }_p,
        \norm{ \left\{ d(x_a,\Delta) \right\}_{a \in A-C} }_p,
        \norm{ \left\{ d(\Delta,x'_b) \right\}_{b \in B-\varphi(C)} }_p \right) }_p,
  \end{multline*}
  where the infimum is over all matchings $\varphi$ between the index sets $A$ and $B$.
  Call this the \emph{$p$-Wasserstein distance} between the persistence modules $M$ and $N$.
\end{definition}

We alert the reader that in~\cite{csehm:lipschitz}, the Wasserstein distance uses the $\infty$-norm to measure distances in $\R^2$. We  use the $1$-norm.

\subsection{Zigzags of morphisms} \label{sec:zigzag}

Let $\cat{A}$ be a category.
Let $M, N \in \cat{A}$.
A \emph{zigzag} of morphisms from $M$ to $N$ is a finite collection of morphisms in $\cat{A}$ of the form $M = M_0 \xto{f_1} M_1 \xfrom{f_2} M_2 \xto{f_3} \cdots \xfrom{f_n} M_n = N$.
The number $n \geq 0$ is called the \emph{length} of the zigzag.
Note that by inserting identity maps, we can allow the morphisms to point in either direction.

\subsection{Symmetric Lawvere metric}
\label{sec:lawvere}

A \emph{symmetric Lawvere metric}
is a class $\mathcal{C}$ together with a function $d$ that assigns to any pair $M,N \in \mathcal{C}$ a number $d(M,N) \in [0,\infty]$ such that for all $M \in \mathcal{C}$, $d(M,M) = 0$, for all $M,N \in \mathcal{C}$, $d(M,N) = d(N,M)$, and for all $M,N,P \in \mathcal{C}$, $d(M,P) \leq d(M,N) + d(N,P)$.
This definition relaxes the usual definition of a metric in three ways: it is allowed to take on the value $\infty$; $d(M,N)=0$ does not imply that $M=N$; and the class $\mathcal{C}$ is not required to be a set.

\section{Weights and path metrics} \label{sec:weight-path-metric}

In this section we use weights on morphisms in a category or weights on objects in an additive category to define a distance that we call a path metric.

\subsection{Weights and metrics on categories}
\label{sec:metric}

In this section we define weight,  give examples of weights, define a metric on a category, and give an elementary property of such a metric.

\begin{definition} \label{def:weight}
  A \emph{weight}, $w$, on a class $\mathcal{A}$ assigns $w(a) \in [0,\infty]$ to each $a \in \mathcal{A}$.
  A weight on a category is a weight on the class of all objects of the category.
\end{definition}

\begin{example} \label{ex:amplitudes}
For any category we have the \emph{zero weight} that assigns each object the weight $0$.
For any additive category we the \emph{one weight} that assigns all nonzero objects weight $1$ and the zero object weight $0$.
For an abelian category let $\mathcal{S}$ be the class of simple objects, whose only subobjects are $0$ and themselves, together with $0$.
Define a weight on $\mathcal{S}$, called the \emph{simple weight}, by $w(0)=0$ and $w(S)=1$ for all other $S \in \mathcal{S}$.
For a field $K$ and the category $\cat{Vect_K}$ of $K$-vector spaces, we have a weight given by the dimension of the vector space. Call this the \emph{dimension weight}.
More generally, if $R$ is an integral domain, then for the category $\cat{Mod_R}$ of $R$-modules, we have a weight given by the rank of a module $M$, which equals the dimension of $M \tensor_R K$ where $K$ is the field of fractions of $R$. 
Call this the \emph{rank weight}.
\end{example}

\begin{definition} \label{def:metric}
  Let $\mathcal{C}$ be a class of objects in a category $\cat{C}$. We define a \emph{metric} on $\mathcal{C}$ to be a symmetric Lawvere metric with the additional property that if $M,N \in \mathcal{C}$ with $M \isom N$ then $d(M,N) = 0$. A \emph{metric on a category} $\cat{C}$ is a metric on the class of all objects in $\cat{C}$.
\end{definition}
Our definition does allow non-isomorphic objects $M$ and $N$ to have $d(M,N) = 0$.
%
  Let $M,M',N,N' \in \mathcal{C}$ with $M \isom M'$ and $N \isom N'$.
  It follows from the triangle inequality that $d(M,N) = d(M',N')$.


\subsection{Path metric from a weight}

We use a weight on a class of morphisms in a category to define a metric for that category which we will call the path metric.
  As a special case we use a weight on a class of objects in an additive category to define a metric on that category.

  Let $\cat{C}$ be a category together with a class, $\mathcal{M}$, of morphisms in $\cat{C}$ and a weight $w$ on $\mathcal{M}$.

\begin{definition} \label{def:dw-morphism}
    Let $\gamma$ be a zigzag in $\cat{C}$ in which each morphism in the zigzag is in $\mathcal{M}$. Define the \emph{cost} of $\gamma$, denoted $\cost_w(\gamma)$, to be the sum of the weights of the morphisms in the zigzag.
    As a special case, the cost of the zigzag of length $0$ is $0$.
Let $A,B \in \cat{C}$. Define the \emph{path distance} by
$d_w(A,B) = \inf_{\gamma} \cost_w(\gamma)$, where the infimum is taken over all zigzags from $A$ to $B$ such that each morphism in the zigzag is in $\mathcal{M}$. If there are no such zigzags then let $d_w(A,B) = \infty$.
\end{definition}

\begin{lemma} \label{lem:path-metric}
  The path distance $d_w$ is a symmetric Lawvere metric  on $\cat{C}$ (Section~\ref{sec:lawvere}).
  If $\mathcal{M}$ includes all isomorphisms in $\cat{C}$ and the weight of each isomorphism is $0$, then $d_w$ is a metric on $\cat{C}$ (Section~\ref{sec:metric}), which we call the \emph{path metric}.
\end{lemma}

\begin{proof}
First, for any object $A$, $d_{w}(A,A)=0$ since there is a zigzag of length $0$ from $A$ to $A$, whose cost, by definition is $0$.
Next, $d_{w}(A,C) \leq d_{w}(A,B) + d_{w}(B,C)$ since we may concatenate a zigzag from $A$ to $B$ with a zigzag from $B$ to $C$ to obtain a zigzag from $A$ to $C$ whose cost is the sum of the costs of the two zigzags.
Furthermore, $d_{w}(A,B) = d_{w}(B,A)$ since every zigzag has a reverse zigzag with the same cost.

For the second statement, for isomorphic objects $A,B$, consider 
the zigzag of length $1$ given by
$f:A \isomto B$,
which has cost $0$. Thus $d_w(A,B) = 0$.
\end{proof}

\begin{assumption} \label{ass:O-w}
  Let $\cat{A}$ be an additive category. Let $\mathcal{O}$ be a class of objects in $\cat{A}$. We will always assume that such a class contains $0$ and that if $A \isom B$ and $A \in \mathcal{O}$ then $B \in \mathcal{O}$. Let
$w$ be a weight on $\mathcal{O}$.
We will always assume that $w(0) = 0$ and if $A \isom B$ and $A,B \in \mathcal{O}$ then $w(B) = w(A)$.
\end{assumption}

\begin{definition} \label{def:dw-object}
Let $\mathcal{O}$ be a class of objects in an additive category $\cat{A}$ and let $w$ a weight on $\mathcal{O}$. See Assumption~\ref{ass:O-w}. Let $\mathcal{M}$ be the class of morphisms in $\cat{A}$ whose kernel and cokernel and both are in $\mathcal{O}$. Define a weight on $\mathcal{M}$, which we also denote $w$, by $w(f) = w(\ker f) + w(\coker f)$.
  Note that it follows that $\mathcal{M}$ contains all isomorphisms and that these have weight $0$. 
Applying Definition~\ref{def:dw-morphism}, with zigzags of morphisms whose kernel and cokernel are in $\mathcal{O}$, we obtain a path distance $d_w$ on $\cat{A}$.
By Lemma~\ref{lem:path-metric}, $d_w$ is a metric, which call the path metric.
\end{definition}

Since any morphism in an abelian category factors through its image (Section~\ref{sec:abelian-category}) we have the following.

\begin{lemma} \label{lem:zigzag-zero}
  Assume that $\cat{A}$ is an abelian category. In Definition~\ref{def:dw-object}, if we restrict $\mathcal{M}$ to morphisms having either zero kernel or zero cokernel then we obtain the same path metric.
\end{lemma}

\subsection{Exact weights and amplitudes}

In this section we consider weights compatible with short exact sequences.

Let $\cat{A}$ be an abelian category (or more generally a (Quillen) exact category) together with a class of objects $\mathcal{O}$ containing $0$ and a weight $w$ on $\mathcal{O}$ (see Assumption~\ref{ass:O-w}).

\begin{definition} \label{def:exact-weight}
    Say that the weight $w$ on $\mathcal{O}$ is \emph{exact} if for each short exact sequence $0 \to A \to B \to C \to 0$ in $\cat{A}$ with $A,B,C \in \mathcal{O}$, $w(A) \leq w(B) + w(C)$, $w(B) \leq w(A) + w(C)$, and $w(C) \leq w(A) + w(B)$.
\end{definition}

The following are examples of exact weights.

\begin{example} \label{ex:nonamplitude}
  Let $\cat{vect_K}$ be the category of finite-dimensional vector spaces over $K$ and $K$-linear maps. For $V \in \cat{vect_K}$, let $w(V) = 0$ if $V=0$, otherwise $w(V) = 1$ if $\dim(V)$ is even and $w(V) = 2$ if $\dim(V)$ is odd.
  Then $w$ is an exact weight on $\cat{vect_K}$.
\end{example}

\begin{example} \label{ex:proj-dim}
  Let $\cat{Mod_R}$ denote the category of right (or left) $R$-modules over a ring $R$.
  For $A \in \cat{Mod_R}$, let $\pd(A)$ denote the projective dimension of $A$.
  We claim that $w(A) = \pd(A)+1$ is an exact weight.
  Consider a short exact sequence $0 \to A \to B \to C \to 0$ in $\cat{Mod_R}$.
  Using the characterization of projective dimension using ext groups and the long exact sequence of ext groups, one obtains
  $\pd(A) \leq \max(\pd(B),\pd(C))$,
  $\pd(B) \leq \max(\pd(A),\pd(C))$, and $\pd(C) \leq 1 + \max(\pd(A), \pd(B))$.
  It follows that $w$ is exact.
  Similarly, if we replace projective dimension with injective dimension or flat dimension, we also obtain an exact weight.
\end{example}

Giunti et al~\cite{Giunti:2021c} consider a stronger notion of exact weight on an abelian category which they call amplitude. We generalize their definition slightly to weights on $\mathcal{O}$.

\begin{definition} \label{def:amplitude}
    Say that the weight $w$ on $\mathcal{O}$ an \emph{amplitude} if for each short exact sequence $0 \to A \to B \to C \to 0$ in $\cat{A}$, if $A,B \in \mathcal{O}$ then $\alpha(A) \leq \alpha(B)$, if $B,C \in \mathcal{O}$ then $\alpha(C) \leq \alpha(B)$, and if $A,B,C \in \mathcal{O}$ then $\alpha(B) \leq \alpha(A) + \alpha(C)$. If, in addition, for each short exact sequence $0 \to A \to B \to C \to 0$ with $A,B,C \in \mathcal{O}$, $\alpha(B) = \alpha(A) + \alpha(C)$ then the amplitude is called \emph{additive}. 
\end{definition}

\begin{example}
  The zero weight on an abelian category is an additive amplitude.
  The one weight on an abelian category is a non-additive amplitude.
  Since any short exact sequence of vector spaces splits, the dimension weight is an additive amplitude.
  Since localization is an exact functor, the rank weight is also an additive amplitude.
  For many other examples of amplitude, see \cite{Giunti:2021c}.
  The exact weights in Examples \ref{ex:nonamplitude} and \ref{ex:proj-dim} are not amplitudes.
  The simple weight extends to an additive amplitude on the class of semisimple objects. In the case of $\cat{Vect_K}$ this produces the dimension weight.
\end{example}

\begin{example} 
For the one weight $w$ on an abelian category $\cat{A}$, the path metric $d_w$ satisfies the following. For $A,B \in \cat{A}$, $d_w(A,B) = 0$ iff $A \isom B$, $d_w(A,B) = 1$ iff $A \not\isom B$ and there exists either an injection or a surjection between $A$ and $B$, and otherwise $d_w(A,B) = 2$.
\end{example}

\subsection{How to obtain weights with stronger properties}

  We will show that each weight has a canonical associated exact weight and the each exact weight has a canonical associated amplitude.
Let $\cat{A}$ be an abelian category with a class of objects $\mathcal{O}$ including $0$.
  First, we need the following lemma.

\begin{lemma}
  Let $\{\alpha_j\}_{j \in J}$ be a set of amplitudes on $\mathcal{O}$. For $A \in \mathcal{O}$, let $\alpha(A) = \sup_{j \in J} \alpha_j(A)$.
  Then $\alpha$ is an amplitude on $\mathcal{O}$.
\end{lemma}

\begin{proof}
    To start, observe that $\alpha(0) = 0$. Next consider a short exact sequence $0 \to A \to B \to C \to 0$. Assume $A,B,C \in \mathcal{O}$.
    Let $\eps > 0$. Then by definition there is a $j \in J$ such that $\alpha_j(B) > \alpha(B) - \eps$. It follows by definition and by assumption that $\alpha(A) + \alpha(C) \geq \alpha_j(A) + \alpha_j(C) \geq \alpha_j(B) > \alpha(B) - \eps$. Therefore $\alpha(A) + \alpha(C) \geq \alpha(B)$. A similar argument shows that if $A,B \in \mathcal{O}$ then $\alpha(A) \leq \alpha(B)$ and that if $B,C \in \mathcal{O}$ then $\alpha(C) \leq \alpha(B)$.
\end{proof}

Similarly, we have the following.

\begin{lemma}
  Let $\{\alpha_j\}_{j \in J}$ be a set of exact weights on $\mathcal{O}$. For $A \in \mathcal{O}$, let $\alpha(A) = \sup_{j \in J} \alpha_j(A)$.
  Then $\alpha$ is an exact weight on $\mathcal{O}$.
\end{lemma}

For two weights $w,w'$ on $\mathcal{O}$ say that $w \leq w'$ iff for all $A \in \mathcal{O}$, $w(A) \leq w'(A)$.

\begin{definition} \label{def:strengthening}
    Let $w$ be a weight on $\mathcal{O}$. We define the associated exact weight $\underline{w}$ on $\mathcal{O}$ to the supremum of the exact weights upper-bounded by $w$. Note that this set of exact weights is nonempty because of the zero weight.
    We define the associated amplitude $\alpha_w$ on $\mathcal{O}$ to be the supremum of the amplitudes upper-bounded by $w$.
\end{definition}

Note that if $w$ is an exact weight then $\underline{w} = w$ and if
  $w$ is an amplitude, then $\alpha_w = w$.

\begin{example}
  For the exact weight $w$ in Example~\ref{ex:nonamplitude}, the associated amplitude is
  the one weight (Example~\ref{ex:amplitudes}).
  For the exact weight $w$ in Example~\ref{ex:proj-dim}, if $\cat{A}$ has enough projectives, then for each $A \in \cat{A}$ there is a surjection $P \xto{f} A$ with $P$ projective. From the short exact sequence $0 \to ker f \to P \to A \to 0$, we obtain $\alpha_w(A) \leq \alpha_w(P) \leq w(P) = 1$.
  Thus the associated amplitude is also the one weight.
\end{example}

\subsection{Weight from a metric and stable weights}

In this section, we use a metric on a category to define a weight on that category. Let $\cat{A}$ be a category together with a class of objects $\mathcal{O}$ in $\cat{A}$ (see Assumption~\ref{ass:O-w}).

\begin{definition} \label{def:wd}
  Let $d$ be a metric on $\cat{A}$ (Section~\ref{sec:metric}).
  For $A \in \cat{A}$ define $\abs{d}(A) = d(A,0)$.
  Then $\abs{d}(0) = 0$
  and if $A \isom B$ then $\abs{d}(A) = \abs{d}(B)$.
  Let $\abs{d}_{\mathcal{O}}$ denote the restriction of $\abs{d}$ to $\mathcal{O}$.
  Then $\abs{d}_{\mathcal{O}}$ is a weight on $\mathcal{O}$ (Assumption~\ref{ass:O-w}).
\end{definition}

From a weight we obtain a path metric and from this path metric we obtain a weight.

\begin{lemma} \label{lem:bound-weight}
  For a weight $w$ on $\mathcal{O}$, $\abs{d_w}_{\mathcal{O}} \leq w$.
\end{lemma}

\begin{proof}
  For $A \in \mathcal{O}$, the zigzag $A \to 0$ shows that $d_w(A,0) \leq w(A)$.
\end{proof}

  Thus, for a weight $w$ on $\mathcal{O}$, we obtain a sequence of decreasing weights on $\mathcal{O}$, $w = w_1 \geq w_2 \geq w_3 \geq \cdots$, with $w_{n+1} = \abs{d_{w_n}}_{\mathcal{O}}$.
  Say that this sequence \emph{stabilizes} if $w_{n+1} = w_n$ for some $n$.

\begin{definition} \label{def:stable-weight}
  Say the weight $w$ on $\mathcal{O}$ is \emph{stable} if $\abs{d_w}_{\mathcal{O}} = w$.
\end{definition}

\begin{lemma}
  For a metric $d$ on $\cat{A}$, $\abs{d}$ need not be an amplitude.
\end{lemma}

\begin{proof}
  Consider Example~\ref{ex:nonamplitude}, where the weight $w$ is not an amplitude and $\abs{d_w} = w$.
\end{proof}

\begin{lemma}
  For a weight $w$ on $\cat{A}$, we may have $\abs{d_w} \neq w$.
\end{lemma}

\begin{proof}
  Consider the following weight on $\cat{vect_K}$. Let $w(0) = 0$, $w(V)=1$ if $\dim(V) = 1$ and $w(V) = 3$ otherwise.
  Then $\abs{d_w}(V) = 2$ if $\dim{V} = 2$. 
\end{proof}

\subsection{Bounds on path metrics}
\label{sec:bounds-abelian-category}

  For a weight, we give an upper bound for its path metric. We define weights that give lower bounds for their path metrics. In Section~\ref{sec:equivalent-conditions} we will show that exact weights give such lower bounds.

   Let $\cat{A}$ be an additive category, together with a class, $\mathcal{O}$, of objects in $\cat{A}$, and a weight $w$ on $\mathcal{O}$ 
(see Assumption~\ref{ass:O-w}).

\begin{lemma} \label{lem:upper-bound-1}
  For all $A,B \in \cat{A}$, $d_w(A,B) \leq \abs{d_w}(A) + \abs{d_w}(B)$.
\end{lemma}

\begin{proof}
  By the triangle inequality, $d_w(A,B) \leq d_w(A,0) + d_w(0,B) = \abs{d_w}(A) + \abs{d_w}(B)$.
\end{proof}

Combining Lemmas~\ref{lem:upper-bound-1} and \ref{lem:bound-weight} we have the following.

\begin{proposition} \label{prop:upper-bound}
  For all $A,B \in \mathcal{O}$, $d_w(A,B) \leq w(A) + w(B)$.
\end{proposition}

\begin{definition} \label{def:lower-bound}
  Say that the weight $w$ \emph{lower bounds its path metric} if for all $A,B \in \mathcal{O}$, $\abs{w(A)-w(B)} \leq d_w(A,B)$.
\end{definition}

\subsection{Equivalent conditions on a weight}
\label{sec:equivalent-conditions}

We conclude this section by showing that the three conditions on a weight that we have introduced are equivalent.

\begin{theorem} \label{thm:stable-lower-bound}
  Let $\cat{A}$ be an additive category, together with a class of objects $\mathcal{O}$ in $\cat{A}$ and a weight $w$ on $\mathcal{O}$ (see Assumption~\ref{ass:O-w}). 
The weight $w$ is stable (Definition~\ref{def:stable-weight}) if and only if it lower bounds its path metric (Definition~\ref{def:lower-bound}).
\end{theorem}

\begin{proof}
  Assume that $w$ is stable.
  By the triangle inequality, for all $A,B \in \mathcal{O}$, $\abs{d_w(A,0) - d_w(B,0)} \leq d_w(A,B)$. Since $w$ is stable, we obtain $\abs{w(A) - w(B)} \leq d_w(A,B)$.

  Assume that $w$ lower bounds its path metric.
  For all $A \in \mathcal{O}$, $w(A) = \abs{w(A) - 0} = \abs{w(A) - w(0)} \leq d_w(A,0)$.
  By Lemma~\ref{lem:bound-weight}, $d_w(A,0) \leq w(A)$.
  Thus, $d_w(A,0) = w(A)$.
\end{proof}

\begin{theorem} \label{thm:equivalent-conditions}
Let $\cat{A}$ be an additive category, together with a class of objects $\mathcal{O}$ in $\cat{A}$ and a weight $w$ on $\mathcal{O}$.
Assume that for all short exact sequences $0 \to A \to B \to C \to 0$ in $\cat{A}$ in which two of $A,B,C$ are in $\mathcal{O}$ then so is the third.
The following three conditions on $w$ are equivalent:
  \begin{enumerate}
  \item \label{it:exact} $w$ is exact (Definition~\ref{def:exact-weight});
  \item \label{it:stable} $w$ is stable; and
  \item $w$ lower bounds its path metric.
  \end{enumerate}
\end{theorem}

\begin{proof}
  We will show \eqref{it:exact} iff \eqref{it:stable}. The remainder of the statement follows from Theorem~\ref{thm:stable-lower-bound}.

  First we show that \eqref{it:stable} implies \eqref{it:exact}.
  Consider a short exact sequence $0 \to A \xto{f} B \xto{g} C \to 0$ with $A,B,C \in \mathcal{O}$.
  Then by assumption and the triangle inequality $w(A) = d_w(A,0) \leq d_w(A,B) + d_w(B,0)$. By assumption $d_w(B,0) = w(B)$ and from the zigzag $A \xto{f} B$ we have that $d_w(A,B) \leq w(C)$. Thus $w(A) \leq w(C) + w(B)$.
  Similarly $w(B) = d_w(B,0) \leq d_w(B,A) + d_w(A,0) \leq w(C) + w(A)$ and
  $w(C) = d_w(C,0) \leq d_w(C,B) + d_w(B,0) \leq w(A) + w(B)$.

It remains to show that \eqref{it:exact} implies \eqref{it:stable}.
  By Lemma~\ref{lem:bound-weight}, $\abs{d_w}_{\mathcal{O}} \leq w$.
  We will obtain a contradiction to $\abs{d_w}_{\mathcal{O}} < w$.
Assume $\abs{d_w}_{\mathcal{O}} < w$.
By Definition~\ref{def:dw-object} and Lemma~\ref{lem:zigzag-zero},
there is a zigzag $\gamma$ consisting of morphisms with either zero kernel and cokernel in $\mathcal{O}$ or zero cokernel and kernel in $\mathcal{O}$ from some object $A \in \mathcal{O}$ to $0$ such that $\cost_w(\gamma) < w(A)$.
The length of any such zigzag is a nonnegative integer.
Take $\gamma$ to be such a zigzag of minimal length.
Let $f$ be the first morphism of this zigzag, which is either of the form $A \xto{f} B$ or $A \xfrom{f} B$.
Since $f$ has either zero kernel and cokernel in $\mathcal{O}$ or zero cokernel and kernel in $\mathcal{O}$,
by our assumption on $\mathcal{O}$, $B \in \mathcal{O}$.
Let $\gamma'$ denote the remainder of the zigzag $\gamma$ without the morphism $f$. Then $\gamma'$ is a zigzag from $B$ to $0$ consisting of morphisms with either zero kernel and cokernel in $\mathcal{O}$ or zero cokernel and kernel in $\mathcal{O}$.
  Since the length of $\gamma'$ is less than the length of $\gamma$, by the minimality of $\gamma$, $\cost_w(\gamma') = w(B)$. 
  There are four cases to consider, depending on the direction of $f$ and whether or not $f$ has zero kernel or zero cokernel.
  For example, if $A \xfrom{f} B$ and $f$ has zero cokernel, then we have the short exact sequence $0 \to \ker(f) \to B \to A \to 0$.
  Since $w$ is exact, $w(A) \leq w(B) + w(\ker(f)) = \cost_w(\gamma') + w(\ker(f)) = \cost_w(\gamma)$, which is a contradiction.
  In the other cases, we also have a short exact sequence containing $A$, $B$, and either $\ker f$ or $\coker f$. The same argument again gives us a contradiction.
\end{proof}

\section{Path metrics for persistence modules} \label{sec:metric-from-measure}

In this section, we specialize the results of Section~\ref{sec:weight-path-metric} to the case of persistence modules indexed by a small category whose set of objects comes equipped with a measure.

\subsection{Indexing categories with measures} \label{sec:measure}

In Section~\ref{sec:dmu}, we will show that for an indexing category $\cat{P}$ with a measure on its set of objects and a weight on an abelian category $\cat{A}$ there is an induced weight on the category of persistence modules indexed by $\cat{P}$ with values in $\cat{A}$.

Let $\cat{P}$ be a small category whose set of objects $P$ has a $\sigma$-algebra $\Omega$ and measure $\mu$. The classical case of persistence modules is given by $P \subseteq \Z$ or $P \subseteq \R$ (assumed to be measurable) with morphisms $\leq$ and the counting measure or the Lebesgue measure, respectively. The case of multi-parameter persistence modules is given by $P \subseteq \Z^d$ or $P \subseteq \R^d$ (assumed to be measurable) with the coordinate-wise/product partial order $\leq$ and the counting measure or the Lebesgue measure, respectively.

\subsection{Weights and path metrics for persistence modules}
\label{sec:dmu}

We now define an induced weight for persistence modules.
Let $\cat{P}$ be a small category whose set of objects $P$ has a measure $\mu$.
Let $\cat{A}$ be an abelian category
together with a class of objects $\mathcal{O}$ in $\cat{A}$ and a weight $w$ on $\mathcal{O}$ (see Assumption~\ref{ass:O-w}).

Assume that we have a 
persistence module $M: \cat{P} \to \cat{A}$ such that for each $p \in P$, $M(p) \in \mathcal{O}$.
Then
we have a function $w(M): P \to [0,\infty]$ given by $p \mapsto w(M(p))$. For example, if $M$ is a persistence module with values in $\cat{Vect_K}$ then $\dim(M)$ is the Hilbert function of $M$.
If $w(M)$ is $\mu$-integrable then we write $\mu(w(M))$ to denote the integral $\int_P w(M)\, d\mu$, which is also written as $\int_P w(M(p))\, d\mu(p)$.

\begin{definition} \label{def:W-mu-w}
  Consider the category of  persistence modules indexed by $\cat{P}$ with values in $\cat{A}$. 
Let $\mathcal{T}_{\mathcal{O},\mu,w}$ be the class of persistence modules $M$
    such that for all $p \in P$, $M(p) \in \mathcal{O}$ and such that $w(M)$ is $\mu$-integrable.
Then $(\mu \circ w)(M) = \mu(w(M))$ defines
    a weight $\mu \circ w$ on $\mathcal{T}_{\mathcal{O},\mu,w}$.
\end{definition}

\begin{lemma} \label{lem:muw-exact}
  If $w$ is an exact weight on $\mathcal{O}$ then $\mu \circ w$ is an exact weight on $\mathcal{T}_{\mathcal{O},\mu,w}$.
\end{lemma}

\begin{proof}
  Let $0$ be the zero persistence module. Then $(\mu \circ w)(0) = \mu(w(0)) = \int_P0\, d\mu = 0$. Also, if $M \isom N$ then for all $p \in P$, $M(p) \isom N(p)$, so $w(M(p)) = w(N(p))$, and hence $(\mu \circ w)(M) = (\mu \circ w)(N)$.

    Let $0 \to M \to N \to Q \to 0$ be a short exact sequence of persistence modules.
  Then for all $p \in P$, $0 \to M(p) \to N(p) \to Q(p) \to 0$ is a short exact sequence in $\cat{A}$.
  If $M,N,Q \in \mathcal{T}_{\mathcal{O},\mu,w}$ then for all $p \in P$, $M(p),N(p),Q(p) \in \mathcal{O}$.
  Since $w$ is an exact weight on $\mathcal{O}$, $w(M(p)) \leq w(N(p)) + w(Q(p))$.
  Thus $\int_P w(M(p)) d\mu(p) \leq \int_P w(N(p)) d\mu(p) + \int_P w(Q(p)) d\mu(p)$.
  That is, $(\mu \circ w)(M) \leq (\mu \circ w)(N) + (\mu \circ w)(Q)$.
  The other cases are similar.
\end{proof}

Similarly, we have the following.

\begin{lemma} \label{lem:muw-amplitude}
  If $\alpha$ is an amplitude on $\mathcal{O}$, then $\mu \circ \alpha$ is an amplitude on $\mathcal{T}_{\mathcal{O},\mu,w}$.
\end{lemma}

\begin{definition} \label{def:dmu}
    Combining Definitions \ref{def:dw-object} and \ref{def:W-mu-w}, we have a path metric $d_{\mu \circ w}$ on persistence modules indexed by $\cat{P}$ with values in $\cat{A}$.
\end{definition}

\begin{lemma} \label{lem:cost}
  Let $M,N$ be persistence modules indexed by $\cat{P}$ with values in $\cat{A}$ and let $\gamma$ be a zigzag in $\mathcal{T}_{\mathcal{O},\mu,w}$ from $M$ to $N$. Then
  $\cost_{\mu \circ w}(\gamma) = \mu(\cost_w(\gamma))$.
\end{lemma}

\begin{proof}
  Consider a zigzag $\gamma$ in $\mathcal{T}_{\mathcal{O},\mu,w}$ given by 
  $M \xto{f_1} M_1 \xfrom{f_2} M_2 \xto{f_3} \cdots \xfrom{f_n} N$.
  Then $\cost_{\mu \circ w}(\gamma) = \sum_{j=1}^n ((\mu \circ w)(\ker f_j) + (\mu \circ w)(\coker f_j)) = \sum_{j=1}^n (\int_P w(\ker f_j)\, d\mu + \int_P w(\coker f_j))\, d\mu = \int_P \sum_{j=1}^n (w(\ker f_j) + w(\coker f_j))\, d\mu = \int_P \cost_w(\gamma)\, d\mu$.
\end{proof}

\subsection{Bounds for the path metric on persistence modules} \label{sec:bounds}

We now provide an upper bound for the path metric induced by a weight and a lower bound on the path metric induced by an exact weight.
  Let $\cat{P}$ be a small category whose set of objects $P$ has a measure $\mu$.
Let $\cat{A}$ be an abelian category together with a class of objects $\mathcal{O}$ in $\cat{A}$ and a weight $w$ on $\mathcal{O}$ (see Assumption~\ref{ass:O-w}).

\begin{proposition} \label{prop:upper-bound-pm}
  For persistence modules $M$ and $N$ indexed by $\cat{P}$ with values in $\cat{A}$, such that for all $p \in P$, $M(p),N(p) \in \mathcal{O}$, and $w(M)$ and $w(N)$ are $\mu$-integrable, we have
  \begin{equation*}
    d_{\mu \circ w}(M,N) \leq \mu(w(M) + w(N)) = \int_P(w(M) + w(N))\, d\mu.
  \end{equation*}
\end{proposition}

\begin{proof}
  By Definition~\ref{def:W-mu-w} and Proposition~\ref{prop:upper-bound}, $d_{\mu \circ w}(M,N) \leq (\mu \circ w)(M) + (\mu \circ w)(N) = \int_P w(M)\, d\mu + \int_P w(N)\, d\mu = \int_P (w(M) + w(N))\, d\mu  = \mu(w(M) + w(N))$.
\end{proof}

\begin{theorem} \label{thm:lower-bound-pm}
Assume that $\mathcal{O}$ that satisfies the 2-of-3 property and that the weight $w$ is exact.
  For persistence modules $M,N$ indexed by $\cat{P}$ with values in $\cat{A}$, such that for all $p \in P$, $M(p),N(p) \in \mathcal{O}$, and $w(M)$ and $w(N)$ are $\mu$-integrable, we have
  \begin{equation*}
    \mu(\abs{ w(M) - w(N)}) = \int_P \abs{w(M) - w(N)}\, d\mu \leq d_{\mu \circ w}(M,N).
  \end{equation*}
\end{theorem}

\begin{proof}
  Consider a zigzag $\gamma$ in $\mathcal{T}_{\mathcal{O},\mu,w}$ given by $M = M_0 \xto{f_1} M_1 \xfrom{f_2} M_2 \xto{f_3} \cdots \xfrom{f_n} M_n = N$ such that each $f_j$ has either zero kernel or zero cokernel.
  Then for all $p \in P$, $\gamma(p)$ is a zigzag in $\mathcal{O}$ from $M(p)$ to $N(p)$.
  By Lemma~\ref{lem:cost}, $\cost_{\mu \circ w}(\gamma) = \mu(\cost_{w}(\gamma))$.
  For each $p \in P$, by Definition~\ref{def:dw-object} and Theorem~\ref{thm:equivalent-conditions},
  $\cost_{w}(\gamma(p)) \geq d_{w}(M(p),N(p)) \geq \abs{w(M(p))-w(N(p))}$.
  Therefore $\cost_{\mu \circ w}(\gamma) \geq \mu(\abs{w(M)-w(N)}) = \int_P \abs{w(M(p))-w(N(p))}\, d\mu(p)$.
  Hence, by Lemma~\ref{lem:zigzag-zero}, $d_{\mu \circ w}(M,N) \geq \mu(\abs{w(M)-w(N)})$.
\end{proof}
 
For example, for persistence modules $M$ and $N$ indexed by $(P,\mu)$ with values in $\cat{Vect_K}$ such that $\dim M$ and $\dim N$ are $\mu$-integrable, we have
\begin{equation} \label{eq:bounds}
  \int_P \abs{\dim M - \dim N}\, d\mu \leq d_{\mu \circ \dim}(M,N) \leq \int_P (\dim M + \dim N)\, d\mu.
\end{equation}

\subsection{Distance between interval modules} \label{sec:interval-modules}

In this section we compute the path distance between interval modules indexed by a totally ordered set.
Our interval modules are persistence modules indexed by $(P,\mu)$, where $P$ is a totally ordered set, and valued in $\cat{Vect_K}$.


It is a good exercise to check the following two lemmas
(or see~\cite[Appendix A]{Bubenik:2018}).

\begin{lemma} \label{lem:interval-nonzero-map}
  Let $I$ and $J$ be interval modules.
  Then there is a nonzero map $f:I \to J$ if and only if the intervals intersect and for each $a \in I$ there exists  $b \in J$ with $b \leq a$ and for each  $b \in J$ there is an $a \in I$ with $b \leq a$.
\end{lemma}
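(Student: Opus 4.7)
The plan is to prove both implications directly by leveraging that $P \subseteq \R$ is totally ordered, so any two points in an intersection of intervals are comparable.

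For the reverse direction, I would construct an explicit natural transformation $f: I \to J$ by setting $f(p) = \id_K$ if $p \in I \cap J$ and $f(p) = 0$ otherwise. The key observation for naturality is that the two ``dominance'' hypotheses, combined with the fact that $I$ and $J$ are intervals (i.e.\ convex subsets of the totally ordered set $P$), imply the following: whenever $p \leq q$ with $p \in I$ and $q \in J$, both $p$ and $q$ must lie in $I \cap J$. Indeed, $p \in I$ gives some $b \in J$ with $b \leq p \leq q$, so by convexity of $J$ we get $p \in J$; symmetrically, $q \in J$ gives some $a \in I$ with $p \leq q \leq a$, so $q \in I$. With this, the naturality square $f(q)\circ I(p\leq q) = J(p\leq q)\circ f(p)$ is trivially satisfied: it is either $\id_K$ on both sides (when both $p,q \in I \cap J$) or zero on both sides. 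Nonemptiness of $I \cap J$ ensures $f \neq 0$.

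For the forward direction, suppose $f: I \to J$ is nonzero. Since each $f(p)$ is a $K$-linear map between $I(p), J(p) \in \{0, K\}$, there exists $p$ with $I(p) = J(p) = K$, so $I \cap J$ is nonempty. Next, I would show that $f$ is constant on $I \cap J$: for any two points $p \leq q$ in $I \cap J$, the structure maps $I(p\leq q)$ and $J(p\leq q)$ are both $\id_K$, so naturality forces $f(q) = f(p)$, and since $P$ is totally ordered this pins $f$ down to a single scalar $c \in K^\times$ on all of $I \cap J$.

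Finally, I would derive the two dominance conditions by contradiction. If there is $a \in I$ with no $b \in J$ satisfying $b \leq a$, then $a$ lies strictly below every point of $I \cap J$; picking any $p \in I \cap J$ and applying naturality to $a \leq p$ gives $c = J(a \leq p) \circ f(a) = 0$ (since $J(a) = 0$), a contradiction. The symmetric argument handles the other condition, using an inequality $p \leq b$ with $p \in I \cap J$ and $b \in J$ lying strictly above $I$, and exploiting that $I(b) = 0$ forces the other side of the naturality square to vanish. There is no genuine obstacle here; the main care point is simply tracking which of $I(p), J(p)$ are zero in each naturality square to see the contradiction cleanly.
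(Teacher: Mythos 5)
Your proof is correct. The paper itself leaves this lemma as an exercise (with a pointer to an appendix of the cited Bubenik--Vergili paper), so there is no in-text argument to compare against, but your approach is the natural one and hits all the key points: defining $f$ to be the identity on $I \cap J$ and zero elsewhere, establishing the crucial ``trapping'' observation (that $p \leq q$ with $p \in I$, $q \in J$ forces $p,q \in I \cap J$ via the dominance conditions and convexity) to verify naturality, and in the forward direction using naturality squares with one vanishing corner ($J(a) = 0$, resp.\ $I(b) = 0$) to kill the nonzero scalar $c$ and derive a contradiction.
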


\begin{lemma} \label{lem:interval-maps}
  Let $I$ and $J$ be interval modules. Then, after possibly interchanging $I$ and $J$, we have one of the following two possible cases.
  \begin{enumerate}
    \item \label{it:a} There are maps $I \xto{f} I \cap J \xto{g} J$ with $f$ surjective, $\ker(f) = I \setminus (I \cap J)$, $g$ injective, and $\coker(g) = J \setminus (I \cap J)$. (This includes the case $I \cap J = \varnothing$.)
    \item $I \subset J$ and there is an interval module $K$ and maps $I \xfrom{f} K \xto{g} J$ with $f$ surjective, $g$ injective and $J \setminus I$ is the disjoint union of $\ker(f)$ and $\coker(g)$.
  \end{enumerate}
\end{lemma}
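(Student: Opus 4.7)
The plan is a case analysis on the relative positions of the endpoints of $I$ and $J$, tracking endpoint openness since this determines which of two overlapping intervals can be a submodule versus a quotient of the other. I will use two structural facts: a nonzero submodule of an interval module is an interval sharing the right endpoint and its openness with the ambient interval, while a nonzero quotient is an interval sharing the left endpoint and its openness.

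After possibly interchanging $I$ and $J$, the first case is that $I \cap J$ (possibly empty) shares its left endpoint (with openness) with $I$ and its right endpoint (with openness) with $J$. Then $I \setminus (I \cap J)$ is a single upper sub-interval of $I$, hence a submodule whose quotient is $I \cap J$; this gives the surjection $f\colon I \to I \cap J$ with $\ker f = I \setminus (I \cap J)$. Dually, $I \cap J$ is an upper sub-interval of $J$, giving the injection $g\colon I \cap J \to J$ with $\coker g = J \setminus (I \cap J)$. The sub-case $I \cap J = \varnothing$ is included by factoring through $0$.

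The remaining possibility, after possibly interchanging, is that $I$ is strictly contained in the interior of $J$: $J$ contains a point strictly to the left of every point of $I$ and a point strictly to the right of every point of $I$. Define $K$ to be the interval whose left endpoint and openness match those of $I$ and whose right endpoint and openness match those of $J$. Then $I$ is the quotient of $K$ by the upper sub-interval $K \setminus I$, so $f\colon K \to I$ is a surjection, and $K$ is the upper sub-interval of $J$ with left endpoint matching $I$, so $g\colon K \to J$ is an injection with cokernel $J \setminus K$. The sets $K \setminus I$ and $J \setminus K$ are the portions of $J \setminus I$ to the right of $I$ and to the left of $I$ respectively, so they are disjoint with union $J \setminus I$. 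The main obstacle is the openness bookkeeping: intervals like $[2,4]$ and $(2,4]$ play different algebraic roles as sub-objects of a common interval, so one must track openness alongside every endpoint inequality for the case split to be exhaustive. Once this is done, the kernel and cokernel identifications are immediate from the description of submodules and quotients above.
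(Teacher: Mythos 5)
Your proposal is correct. The paper itself gives no proof of this lemma (it is left as an exercise, with a pointer to Appendix A of Bubenik--Vergili), so there is nothing in the paper to compare against line by line; but your argument is the standard one and is the argument one would expect the reference to contain. The two structural facts you invoke -- that nonzero submodules of an interval module are precisely the upper sub-intervals sharing the right endpoint (with its openness), and nonzero quotients are precisely the lower sub-intervals sharing the left endpoint (with its openness) -- are correct, and from them the case split and the kernel/cokernel identifications follow as you describe. The exhaustiveness of the split is sound: when $I\cap J\neq\varnothing$, each of its two endpoints is inherited from at least one of $I$, $J$, so either (after possibly swapping) the left endpoint comes from $I$ and the right from $J$ (your Case 1), or both come from the same interval, which after swapping forces $I\subsetneq J$ with neither endpoint shared, i.e.\ $I$ in the ``interior'' of $J$ (your Case 2). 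Your caution about tracking openness alongside every inequality is exactly the right thing to emphasize, since that bookkeeping is where the case split could otherwise fail to be exhaustive.
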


\begin{proposition} \label{prop:interval-module-distance}
  Let $I$, $J$ be interval modules or the zero module, which we also denote by the empty set. Then $d_{\mu \circ \dim}(I,J) = \mu(I \symmdiff J)$,
  where $I \symmdiff J$ denotes the symmetric difference $(I \cup J) \setminus (I \cap J)$.
\end{proposition}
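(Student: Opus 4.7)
The plan is to prove $d_\mu(I,J) = \mu(I \symmdiff J)$ by sandwiching: establish the lower bound from the Hilbert-function bound in Proposition~\ref{prop:bounds}, and establish the upper bound by exhibiting explicit short zigzags provided by Lemma~\ref{lem:interval-maps}.

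For the lower bound, I would observe that for an interval module $I$, the Hilbert function $\dim I : P \to [0,\infty]$ is the indicator function $\mathbf{1}_I$ (where we interpret the zero module as the empty interval). Hence $\lvert \dim I(a) - \dim J(a) \rvert = \mathbf{1}_{I \symmdiff J}(a)$, so
\begin{equation*}
\int_P \abs{\dim I - \dim J}\, d\mu \;=\; \mu(I \symmdiff J).
\end{equation*}
Proposition~\ref{prop:bounds} then yields $d_\mu(I,J) \geq \mu(I \symmdiff J)$ directly, and the same holds trivially if one of $I,J$ is zero.

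For the upper bound, I would split into the two cases of Lemma~\ref{lem:interval-maps}. In case (1), the zigzag $I \xto{f} I \cap J \xto{g} J$ has $\ker f = I \setminus (I \cap J)$ (viewed as an interval module), $\coker g = J \setminus (I \cap J)$, and zero cokernel/kernel respectively for the surjection/injection. Thus
\begin{equation*}
\cost_\mu(\gamma) = \mu\bigl(I \setminus (I \cap J)\bigr) + \mu\bigl(J \setminus (I \cap J)\bigr) = \mu(I \symmdiff J).
\end{equation*}
In case (2), with $I \subseteq J$ and zigzag $I \xfrom{f} K \xto{g} J$, we have $\ker f$ and $\coker g$ disjoint with union $J \setminus I = I \symmdiff J$, so again $\cost_\mu(\gamma) = \mu(\ker f) + \mu(\coker g) = \mu(I \symmdiff J)$. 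The boundary cases where $I$ or $J$ is the zero module are handled by the single-morphism zigzag $0 \to J$ or $I \to 0$, which has cost $\mu(J)$ or $\mu(I)$ respectively, matching $\mu(I \symmdiff J)$. Taking the infimum over all zigzags therefore gives $d_\mu(I,J) \leq \mu(I \symmdiff J)$.

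No step looks genuinely hard here, since the work has been packaged into Proposition~\ref{prop:bounds} and Lemma~\ref{lem:interval-maps}. The only mild subtlety is checking measurability of $w \circ \ker$ and $w \circ \coker$ for the constructed zigzags, but in all cases these functions are indicator functions of sub-intervals of $I$ or $J$, hence measurable with respect to either the counting measure on $P \subseteq \Z$ or the Lebesgue measure on $P \subseteq \R$. So the combination of the two bounds gives the equality, completing the proof.
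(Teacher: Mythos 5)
Your proposal is correct and follows essentially the same approach as the paper: the lower bound comes from Proposition~\ref{prop:bounds}, and the upper bound from the explicit zigzags of Lemma~\ref{lem:interval-maps} (plus the trivial zigzag when one module is zero). The only difference is that you work out the two cases of Lemma~\ref{lem:interval-maps} in slightly more detail and add a remark on measurability, but the structure of the argument is identical.
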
 

\begin{proof}
  $(\leq)$ If either $I$ or $J$ are zero, then we have a canonical zigzag $I \to 0$ or $0 \to J$. By Lemma~\ref{lem:interval-maps} we have one of two canonical zigzags from $I$ to $J$. In each of these cases the cost of this zigzag is $\mu(I \symmdiff J)$.

  $(\geq)$ By 
  \eqref{eq:bounds}
  $d_{\mu \circ \dim}(I,J) \geq \int \abs{\dim I - \dim J}\, d\mu = \mu(I \symmdiff J)$.
\end{proof}

\section{Wasserstein distances for Grothendieck categories} \label{sec:wasserstein}

In this section we define $p$-Wasserstein distances for a Grothendieck category and show that it generalizes the usual definition. We also show that it satisfies a universal property.

\subsection{The $p$-Wasserstein distance} \label{sec:Wpdef}

Let $\cat{A}$ be a Grothendieck category with a metric $d$ (Section~\ref{sec:metric}).
Recall (Definition~\ref{def:subcat}) that $\cat{A_{\ell}}$ is the full subcategory of objects isomorphic to direct sums of objects with local endomorphism rings.
For $1 \leq p \leq \infty$, define the \emph{$p$-Wasserstein distance} as follows.

\begin{definition} \label{def:Wp}
  Let $M, N \in \cat{A_{\ell}}$.
  Define
  \begin{equation} \label{eq:Wp}
    W_p(d)(M,N) = \inf \ \norm{ \{d(M_a,N_a)\}_{a \in A} }_p,
  \end{equation}
where the infimum is taken over all isomorphisms $M \isom \coprod_{a \in A} M_a$ and $N \isom \coprod_{a \in A} N_a$, where each $M_a$ and $N_a$ is either $0$ or has a local endomorphism ring (and is thus indecomposable).
\end{definition}

\begin{lemma} \label{lem:Wp}
  Let $M,N \in \cat{A_{\ell}}$. Assume $M \isom \coprod_{a \in A} M_a$ and $N \isom \coprod_{b \in B} N_b$, where each $M_a$ and $N_b$ has a local endomorphism ring. Then
  \begin{equation*}
    W_p(d)(M,N) = \inf_{\varphi} \norm{ \left(
        \norm{ \left( d(M_c,N_{\varphi(c)}) \right)_{c \in C} }_p,
        \norm{ \left( d(M_a,0) \right)_{a \in A-C} }_p,
        \norm{ \left( d(0,N_b) \right)_{b \in B-\varphi(C)} }_p \right) }_p,
  \end{equation*}
  where the infimum is over all matchings: $C \subset A$ and $\varphi:C \to B$ is injective.
\end{lemma}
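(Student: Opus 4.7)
The plan is to prove the equality by establishing both inequalities between the infimum in Definition~\ref{def:Wp} (over pairs of decompositions on a common index set, with zero summands allowed) and the infimum in the statement (over matchings $\varphi : C \to B$). In both directions I would exploit that $d$ is invariant under isomorphism in each slot (noted at the end of Section~\ref{sec:metric}) and that $d(0,0) = 0$, so the translation between the two types of decomposition data preserves the $p$-norm term by term.

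For the inequality $W_p(d)(M,N) \leq \inf_{\varphi}(\cdots)$, I would take a matching $\varphi : C \to B$ and pad to the common index set $A' = A \sqcup (B - \varphi(C))$ by defining $M'_a = M_a$, $N'_a = N_{\varphi(a)}$ for $a \in C$; $M'_a = M_a$, $N'_a = 0$ for $a \in A - C$; and $M'_b = 0$, $N'_b = N_b$ for $b \in B - \varphi(C)$. These define admissible decompositions in the sense of Definition~\ref{def:Wp}, and the $p$-norm of $\{d(M'_{a'}, N'_{a'})\}_{a' \in A'}$ recovers exactly the three-block expression attached to $\varphi$.

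For the reverse inequality, I would start with an arbitrary admissible pair $M \isom \coprod_{a' \in A'} M'_{a'}$, $N \isom \coprod_{a' \in A'} N'_{a'}$, and set $A'_M = \{a' \in A' : M'_{a'} \neq 0\}$ and $A'_N = \{a' \in A' : N'_{a'} \neq 0\}$. Applying Theorem~\ref{thm:krsa} to the two decompositions of $M$ (respectively of $N$) yields bijections $\sigma : A \to A'_M$ and $\tau : B \to A'_N$ with $M_a \isom M'_{\sigma(a)}$ and $N_b \isom N'_{\tau(b)}$. Then $C := \sigma^{-1}(A'_M \cap A'_N)$ together with $\varphi := \tau^{-1} \circ \sigma|_C : C \to B$ is a matching, and partitioning $A'$ as $(A'_M \cap A'_N) \sqcup (A'_M - A'_N) \sqcup (A'_N - A'_M) \sqcup (A' - (A'_M \cup A'_N))$ aligns the $p$-norm over $A'$, term by term, with the three blocks for $\varphi$ (the fourth block contributing $d(0,0) = 0$).

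The main obstacle is the interplay between Definition~\ref{def:Wp}, which permits zero summands in the common-index decomposition, and Theorem~\ref{thm:krsa}, which applies only to summands with local endomorphism rings. One must first restrict to the nonzero subsets $A'_M$ and $A'_N$ before invoking KRSA, and then verify that the resulting four-way partition of $A'$ genuinely aligns with the three blocks in the statement. Beyond this bookkeeping, the argument reduces to the $p$-norm splitting identity of Section~\ref{sec:p-norm} and to the isomorphism-invariance of $d$.
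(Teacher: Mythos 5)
Your proposal is correct and follows essentially the same approach as the paper's (very terse) proof: invoke Theorem~\ref{thm:krsa} for uniqueness of decompositions and observe that allowing zero summands in the common-index-set formulation is what turns it into a matching problem. You have simply spelled out the two-directional translation and the bookkeeping around zero summands that the paper leaves implicit.
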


\begin{proof}
By Theorem~\ref{thm:krsa}, the decompositions of $M$ and $N$ are unique up to isomorphism and reordering. Note that the direct sum in Definition~\ref{def:Wp} also allows zero objects. So the infimum in \eqref{eq:Wp} is over all  matchings of $A$ and $B$, where the unmatched terms are matched with the zero object. 
\end{proof}

\begin{proposition} \label{prop:Wpd-metric}
  $W_p(d)$ is a metric (Section~\ref{sec:metric}) on $\cat{A_{\ell}}$.
\end{proposition}

\begin{proof}
  By assumption, if $M \isom N$ then $d(M,N)=0$. It follows that if $M \isom N$ then $W_p(d)(M,N) = 0$.
  Since $d$ is symmetric, it follows that $W_p(d)$ is symmetric. 

The proof of the triangle inequality uses Theorem~\ref{thm:krsa}.
Let $M,N,P \in \cat{A_{\ell}}$.
Let $\eps > 0$.
By including sufficiently many zero modules and relabeling, we may assume that $M \isom \coprod_{a \in A} M_a$, $N \isom \coprod_{a \in A} N_a$, $P \isom \coprod_{a \in A} P_A$, and that
$W_p(d)(M,N) \geq \norm{\{d(M_a,N_a)\}_{a \in A}}_p - \eps$ and
$W_p(d)(N,P) \geq \norm{\{d(N_a,P_a)\}_{a \in A}}_p - \eps$.
  Then
  \begin{multline*}
    W_p(d)(M,P) \leq \norm{ \left\{ d(M_k,P_k) \right\}_k }_p \leq
    \norm{ \left\{ d(M_k,N_k) + d(N_k,P_k) \right\}_k }_p \\
    \leq
    \norm{ \left\{ d(M_k,N_k) \right\}_k }_p +
    \norm{ \left\{ d(N_k,P_k) \right\}_k }_p \leq
    W_p(d)(M,N) + W_p(d)(N,P) + 2\eps,
  \end{multline*}
  where the first inequality is by definition, the second inequality is by the triangle inequality for $d$, and the third inequality is by the Minkowski inequality. The triangle inequality follows.
\end{proof}

For example, if we have a measure space $(P,\mu)$ and a small category $\cat{P}$ with set of objects $P$, we have the Grothendieck category $\cat{Vect_K}^{\cat{P}}$ and metric $W_p(d_{\mu \circ \dim})$ on the subcategory $\cat{Vect_K}_{\ell}^{\cat{P}}$ whose objects are isomorphic to direct sums of objects with local endomorphism rings.

\subsection{The $W_p$ Isometry Theorem} \label{sec:Wp-isometry}

In this section we show that in the case of persistence modules indexed by $P \subseteq \R$ our definition of $p$-Wasserstein distance (Definition~\ref{def:Wp})
agrees with the definition using persistence diagrams (Definition~\ref{def:Wpq}). 
 Consider $\R$ with the usual total order and let $P \subseteq \R$.
  For an interval $I$ in $P$, let $P_{>I} = \{p \in P \ | \ \forall x \in I, x < p\}$.
  Let $\mu$ be a measure on $P$ such that for all intervals $I$ in $P$, $\mu(I) = \inf P_{>I} - \inf I$, where $\inf \emptyset = \infty$.
  For example, we may take $P = \R$ or $P = [0,\infty)$ with the Lebesgue measure, or $P = \Z$ or $P = \N$ with the counting measure.

Recall (Section~\ref{sec:wasserstein-pers-mod}) that for an interval module $I$, $x(I) = (\inf I, \inf P_{>I})$ and that $\Delta$ denotes the diagonal in $[-\infty,\infty]^2$.
Also, for $x,y \in [-\infty,\infty]^2$, $d(x,y) = \norm{x-y}_1$.

\begin{lemma} \label{lem:lambda-I}
  Let $I$ be an interval in $P$. Then $d(x(I),\Delta) = \mu(I)$.
\end{lemma}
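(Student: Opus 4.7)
The plan is to reduce the statement to a one-variable minimization on the diagonal. Write $a = \inf I$ and $b = \sup I$, so that $x(I) = (a,b)$ with $a \leq b$ in $[-\infty,\infty]$. For any diagonal point $(t,t) \in \Delta$,
\[
d\bigl(x(I),(t,t)\bigr) = \abs{a-t} + \abs{b-t}.
\]
As a function of $t$, this sum is a classical ``two-point'' minimization: on the interval $[a,b]$ it is constant, equal to $b-a$, and on either side of $[a,b]$ it is strictly larger (increasing as $t$ moves away from $[a,b]$). Consequently
\[
d(x(I),\Delta) = \inf_{t \in [-\infty,\infty]} \bigl( \abs{a-t} + \abs{b-t} \bigr) = b - a.
\]

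To finish, I would identify $b-a$ with $\mu(I)$ in the two cases under consideration. For $P \subseteq \R$ with the Lebesgue measure, any nonempty interval $I$ has $\mu(I) = \sup I - \inf I$ regardless of whether the endpoints belong to $I$, so $\mu(I) = b - a$ immediately. For $P \subseteq \Z$ with the counting measure, the analogous identification holds under the standard convention for $x(I)$ in the discrete setting, so again $\mu(I) = b-a$.

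There is essentially no hard step here; the argument is entirely a two-line convex-minimization calculation followed by matching with the definition of $\mu(I)$. The only care required is at the boundary cases $a = -\infty$ or $b = +\infty$, where one should observe that the infimum of $\abs{a-t}+\abs{b-t}$ is $+\infty$, matching $\mu(I) = \infty$ on the measure side (so the equality $d(x(I),\Delta) = \mu(I)$ still holds in $[0,\infty]$).
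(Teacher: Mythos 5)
Your argument is correct and is essentially the same as the paper's: both reduce to computing the $L^1$ distance from $(a,b)=(\inf I,\sup I)$ to a point $(t,t)$ on the diagonal, observe that the minimum value is $b-a$, and then identify $b-a$ with $\mu(I)$. The paper gets the lower bound $|a-t|+|b-t|\geq |a-b|$ from the triangle inequality and exhibits the point $(b,b)$ to achieve it; you instead phrase the same computation as a one-variable minimization over $t$. Those are two ways of saying the same thing.

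One small remark: the identification $\mu(I)=\sup I-\inf I$ in the counting-measure case on $\Z$ is the one place where some convention is hiding (for $I=\{a,\dots,b\}$ the naive count is $b-a+1$), and you flagged this without resolving it. The paper itself simply asserts $\mu(I)=\sup I-\inf I$ without addressing the discrete case, so you are not missing anything the paper supplies; just be aware that this step relies on the paper's implicit convention for how intervals over $\Z$ are measured, e.g.\ treating them as half-open.
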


\begin{proof}
$d(x(I),\Delta) = d((\inf I, \inf P_{>I}),\Delta) = \inf P_{>I} - \inf I = \mu(I)$.
\end{proof}

\begin{lemma} \label{lem:intersect}
  If $I,J$ are 
  intervals in $P$ with $I \cap J \neq \varnothing$, then
  $d(x(I),x(J)) = \mu(I \symmdiff J)$,
  where $I \symmdiff J$ denotes the symmetric difference $(I \cup J) \setminus (I \cap J)$.
\end{lemma}

\begin{proof}
  There are a number of cases to consider.
  However, in each case,
  $\mu({I \symmdiff J}) = \abs{\inf I - \inf J} + \abs{\inf P_{>I} - \inf P_{>J}} = \norm{x(I)-x(J)}_1 = d(x(I),x(J))$.
\end{proof}

\begin{lemma} \label{lem:disjoint}
  If $I$ and $J$ are 
  intervals in $P$ with $I \cap J = \varnothing$, then $d(x(I),x(J)) \geq \mu(I) + \mu(J)$.
\end{lemma}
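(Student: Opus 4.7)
The plan is a direct computation after reducing to a single case by symmetry. Write $a_I = \inf I$, $b_I = \sup I$, $a_J = \inf J$, $b_J = \sup J$, so that by definition $\mu(I) = b_I - a_I$ and $\mu(J) = b_J - a_J$ (allowing values in $[0,\infty]$; the inequality is trivial if any of these is $+\infty$, so we may assume all endpoints are finite). Since $I$ and $J$ are disjoint intervals in $\R$, one lies entirely to the left of the other; after interchanging $I$ and $J$ (which preserves both sides of the inequality, since $d$ is symmetric and addition is commutative), I may assume $b_I \leq a_J$. Note that equality is allowed here, accommodating the degenerate cases such as $I = [0,1]$ and $J = (1,2]$.

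Under this ordering, I also have $a_I \leq b_I \leq a_J \leq b_J$, so both coordinates drop their absolute values:
\begin{equation*}
  d(x(I),x(J)) = |a_I - a_J| + |b_I - b_J| = (a_J - a_I) + (b_J - b_I).
\end{equation*}
The target inequality $d(x(I),x(J)) \geq \mu(I) + \mu(J) = (b_I - a_I) + (b_J - a_J)$ then simplifies, after canceling $-a_I$ and $b_J$, to $a_J - b_I \geq b_I - a_J$, i.e. $a_J \geq b_I$, which is exactly the ordering assumption. So the inequality holds with equality precisely when $\sup I = \inf J$, i.e. the intervals are as close as disjoint intervals can be.

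There is essentially no obstacle: the only mild point is justifying the WLOG reduction (disjoint intervals in a totally ordered set are linearly separated) and noting that the argument still goes through in the degenerate case $b_I = a_J$, which is consistent with Lemma~\ref{lem:interval-nonzero-map} and the convention that intervals may be half-open.
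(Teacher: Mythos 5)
Your proof is correct and follows essentially the same route as the paper's: reduce by symmetry to the case $\sup I \leq \inf J$, compute $d(x(I),x(J))$ directly since the absolute values resolve under this ordering, and observe the inequality collapses to $\inf J \geq \sup I$. The only difference is that you spell out the WLOG step and the boundary case $\sup I = \inf J$ a bit more explicitly, which is fine but not a different method.
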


\begin{proof}
  Without loss of generality, assume that $\inf I \leq \inf P_{>I} \leq \inf J \leq \inf P_{>J}$. Then
  $d(x(I),x(J)) = \inf P_{>J} - \inf P_{>I} + \inf J - \inf I \geq \inf P_{>J} - \inf J + \inf P_{>I} - \inf I = \mu(I) + \mu(J)$.
\end{proof}
 
\begin{proposition} \label{prop:W11IJ}
  For intervals $I$ and $J$ in $P$, $W_1(I,J) = \mu(I \symmdiff J)$.
\end{proposition}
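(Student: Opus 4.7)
The plan is to reduce the computation of $W_1(I,J)$ to the two obvious candidate matchings allowed by the fact that the persistence diagrams $\Dgm I$ and $\Dgm J$ each consist of a single point (or are empty), and then to invoke the three preceding lemmas to evaluate each candidate.

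First I would handle the degenerate case in which $I$ or $J$ is the zero module. Without loss of generality say $I$ is zero, so $\Dgm I = \varnothing$; then the only matching in Definition~\ref{def:Wpq} sends $x(J)$ to the diagonal, giving cost $d(x(J),\Delta) = \mu(J) = \mu(I \symmdiff J)$ by Lemma~\ref{lem:lambda-I}. Now assume both $I$ and $J$ are nonempty. Then $\Dgm I = \{x(I)\}$ and $\Dgm J = \{x(J)\}$, so there are precisely two candidate matchings: either match $x(I)$ to $x(J)$, giving cost $d(x(I),x(J))$, or match both points to $\Delta$, giving cost $d(x(I),\Delta)+d(x(J),\Delta) = \mu(I)+\mu(J)$ by Lemma~\ref{lem:lambda-I}. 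Hence
\begin{equation*}
  W_1(I,J) = \min\bigl(d(x(I),x(J)),\ \mu(I)+\mu(J)\bigr).
\end{equation*}

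Next I split on whether $I$ and $J$ intersect. If $I \cap J \neq \varnothing$, then by Lemma~\ref{lem:intersect} the on-diagonal cost is $d(x(I),x(J)) = \mu(I \symmdiff J)$, and since $I \symmdiff J \subseteq I \cup J$ we have $\mu(I \symmdiff J) \leq \mu(I) + \mu(J)$, so the minimum equals $\mu(I \symmdiff J)$. If instead $I \cap J = \varnothing$, then by Lemma~\ref{lem:disjoint} we have $d(x(I),x(J)) \geq \mu(I)+\mu(J)$, so the minimum is the diagonal matching cost, and this equals $\mu(I) + \mu(J) = \mu(I \symmdiff J)$ since the intervals are disjoint.

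The main (very mild) obstacle is simply bookkeeping across the cases, and ensuring that the comparison $\mu(I \symmdiff J) \leq \mu(I)+\mu(J)$ is justified when one of the intervals has infinite measure; in that situation both quantities are $\infty$ and the equality $W_1(I,J) = \mu(I \symmdiff J)$ still holds, so no real difficulty arises. Everything else is a direct assembly of the three preceding lemmas.
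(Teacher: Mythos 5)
Your proposal is correct and follows essentially the same route as the paper: reduce to the two matchings available between single-point diagrams, then split on whether $I \cap J$ is empty and invoke Lemmas~\ref{lem:lambda-I}, \ref{lem:intersect}, and \ref{lem:disjoint}. The extra handling of the zero-module and infinite-measure cases is sound but not needed under the paper's convention that interval modules correspond to nonempty intervals.
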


\begin{proof}
  There are only two matchings between $I$ and $J$:  one in which $I$ and $J$ are matched to one another, and one in which $I$ and $J$ are both matched to the diagonal.
  So by Definition~\ref{def:Wpq} and Lemma~\ref{lem:lambda-I},
  \begin{eqnarray*}
  	W_1(I,J) & = & \min \left( d(x(I),x(J)), d(x(I),\Delta) + d(\Delta,x(J)) \right)
  		\\
  		& = & \min \left( d(x(I),x(J)), \mu(I) + \mu(J) \right).
  \end{eqnarray*}
  If $I \cap J \neq \varnothing$, then by Lemma~\ref{lem:intersect}, $d(x(I),x(J)) = \mu(I \symmdiff J) \leq \mu(I) + \mu(J)$, so $W_{1}(I,J) = \mu(I \symmdiff J)$.
  If $I \cap J = \varnothing$, then by Lemma~\ref{lem:disjoint} it follows that $W_{1}(I,J) = \mu(I) + \mu(J) = \mu(I \symmdiff J)$.
\end{proof}

\begin{theorem}[$W_p$ Isometry Theorem] \label{thm:Wp-isometry}
  Let $P \subseteq \R$ with measure $\mu$ such that for each interval $I$ in $P$, $\mu(I) = \inf P_{>I} - \inf I$.
  If $M,N \in \cat{Vect_K}^{\cat{P}}$ have a persistence diagram, then for $1 \leq p \leq \infty$,
  \begin{equation*}
    W_p(d_{\mu \circ \dim})(M,N)) = \inf \norm{ \{\mu(M_a \symmdiff N_a)\}_{a \in A}}_p = W_p(M,N),
  \end{equation*}
  where the infimum is taken over all isomorphisms $M \isom \coprod_{a \in A} M_a$ and $N \isom \coprod_{a \in A} N_{a}$ where every $M_a$ and $N_a$ is either an interval module or is zero, which corresponds to the empty set.
\end{theorem}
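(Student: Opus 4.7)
The plan is to prove the two equalities separately. For the first equality, apply Definition~\ref{def:Wp} directly: $W_p(d_\mu)(M, N)$ is by definition $\inf \norm{\{d_\mu(M_a, N_a)\}_{a \in A}}_p$, where the infimum ranges over decompositions into (KRS-unique) indecomposable summands allowing zero summands. Since $M$ and $N$ have persistence diagrams, each $M_a$ and $N_a$ is either an interval module or $0$, so Proposition~\ref{prop:interval-module-distance} gives $d_\mu(M_a, N_a) = \mu(M_a \symmdiff N_a)$, where $0$ is identified with $\varnothing$. This is exactly the first equality.

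For the second equality, I reformulate both sides as infima over partial matchings using Lemma~\ref{lem:Wp}. On the algebraic side, a partial matching $\varphi : C \to B$ contributes matched costs $d_\mu(M_c, N_{\varphi(c)})$ and unmatched costs $d_\mu(M_a, 0) = \mu(M_a)$ and $d_\mu(0, N_b) = \mu(N_b)$; on the persistence-diagram side, Definition~\ref{def:Wpq} contributes matched costs $d(x(M_c), x(N_{\varphi(c)}))$ and unmatched costs $d(x(M_a), \Delta) = \mu(M_a)$ and $d(\Delta, x(N_b)) = \mu(N_b)$. Lemma~\ref{lem:lambda-I} shows the unmatched costs already agree, and Lemma~\ref{lem:intersect} shows the matched costs agree whenever $M_c \cap N_{\varphi(c)} \neq \varnothing$. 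Only for disjoint pairs do the costs differ: algebraically, the cost is exactly $\mu(M_c) + \mu(N_{\varphi(c)})$, while on the persistence-diagram side it is at least $\mu(M_c) + \mu(N_{\varphi(c)})$ by Lemma~\ref{lem:disjoint}.

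The key step is to show that in both infima, it suffices to consider matchings that pair only \emph{intersecting} intervals. Suppose a matching $\varphi$ pairs a disjoint pair $(M_c, N_{\varphi(c)})$. Removing this pair from $\varphi$ and treating $M_c$ and $N_{\varphi(c)}$ as unmatched replaces a single $p$-norm entry of value $\mu(M_c) + \mu(N_{\varphi(c)})$ (algebraic side) or of value at least $\mu(M_c) + \mu(N_{\varphi(c)})$ (persistence-diagram side) by two separate entries of value $\mu(M_c)$ and $\mu(N_{\varphi(c)})$. Since $(a+b)^p \geq a^p + b^p$ for $a, b \geq 0$ and $p \in [1, \infty)$, and $a+b \geq \max(a,b)$ for $p = \infty$, this modification does not increase the $p$-norm of the cost vector. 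Once both infima are restricted to matchings of intersecting intervals, all individual costs agree by Lemma~\ref{lem:intersect} and Lemma~\ref{lem:lambda-I}, so the two infima coincide. This gives the second equality.

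The main obstacle is ensuring that the disjoint-pair reduction works uniformly across all $p \in [1, \infty]$; this rests on the elementary monotonicity of the $p$-norm under splitting a coordinate $a+b$ into two coordinates $a,b$. A minor bookkeeping point is that both infima implicitly allow enlarging the index set with zero summands, and Theorem~\ref{thm:krsa} guarantees that the matching reformulation is well-defined regardless of how the decompositions are padded.
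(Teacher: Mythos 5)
Your proposal is correct and follows essentially the same route as the paper: first equality via Definition~\ref{def:Wp} and Proposition~\ref{prop:interval-module-distance}, second equality by expressing both sides as infima over partial matchings and using Lemmas~\ref{lem:lambda-I}, \ref{lem:intersect}, and \ref{lem:disjoint} to reduce to matchings of intersecting intervals. You in fact supply a detail the paper elides: the explicit inequality $(a+b)^p \geq a^p + b^p$ (resp.\ $a+b \geq \max(a,b)$ for $p=\infty$) justifying that unmatching a disjoint pair never increases the $p$-norm, which is the substance behind the paper's terse invocation of Lemma~\ref{lem:disjoint}.
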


\begin{proof}
  The first equality follows from Definition~\ref{def:Wp} and Proposition~\ref{prop:interval-module-distance}.

Assume $M \isom \coprod_{a \in A} I_a$ and $N \isom \coprod_{b \in B} I'_b$, where each $I_a$ and $I'_b$ is an interval module. 
By Definition~\ref{def:Wpq} and Lemma~\ref{lem:lambda-I},
  \begin{multline*}
    W_p(M,N) =\\ \inf_{\varphi} \norm{ \left(
        \norm{ \left\{ d(x(I_c),x(I'_{\varphi(c)})) \right\}_{c \in C} }_p,
        \norm{ \left\{ \mu(I_a) \right\}_{i \in A-C} }_p,
        \norm{ \left\{ \mu(I'_b) \right\}_{j \in B-\varphi(C)} }_p \right) }_p,
  \end{multline*}
where the infimum is over all matchings $\varphi$ between $A$ and $B$.
By Lemma~\ref{lem:disjoint}, this equals the infimum taken over matchings $\varphi:C \to B$ with the property that $I_c \cap I'_{\varphi(c)} \neq \varnothing$ for all $c \in C$ (where it could be that $C = \varnothing$).
  Thus, by Lemma~\ref{lem:intersect},
  \begin{multline*}
    W_p(M,N) =\\ \inf_{\varphi} \norm{ \left(
        \norm{ \left\{ \mu(I_c \symmdiff I'_{\varphi(c)}) \right\}_{c \in C} }_p,
        \norm{ \left\{ \mu(I_a \symmdiff \varnothing) \right\}_{i \in A-C} }_p,
        \norm{ \left\{ \mu(\varnothing \symmdiff I'_b) \right\}_{j \in B-\varphi(C)} }_p \right) }_p.
  \end{multline*}
  Writing this more compactly we obtain the second equality.
\end{proof}

\subsection{The universal property of  $W_p(d)$} \label{sec:universal-property}

In this section we show that $W_p(d)$ may be characterized as the largest $p$-subadditive metric that is is bounded by $d$ on those objects with local endomorphism rings.
Let $\cat{A}$ be a Grothendieck category with metric $d$ (Section~\ref{sec:metric}).
Let $1 \leq p \leq \infty$.

\begin{definition} \label{def:dp}
  For $A,B \in \cat{A}$, let $d_p(A,B) = \min(d(A,B),\norm{(d(A,0),d(0,B))}_p)$.
\end{definition}

One may check that $d_p$ is a metric on $\cat{A}$ (see~\cite[Lemma 3.13]{be:universality}).

\begin{lemma} \label{lem:restricted}
  Restricted to objects with local endomorphism rings and zero, $W_p(d)$ equals $d_{p}$.
\end{lemma}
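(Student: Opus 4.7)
The plan is to prove the two inequalities $W_p(d)(M,N) \leq d(M,N)$ and $W_p(d)(M,N) \geq d(M,N)$ separately, under the hypothesis that both $M$ and $N$ have local endomorphism rings.

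For the upper bound, I would exhibit a specific decomposition that realizes the distance: take the singleton index set in Definition~\ref{def:Wp} with $M_\ast = M$ and $N_\ast = N$. The resulting $p$-norm of the single term $\{d(M,N)\}$ is just $d(M,N)$, and since $W_p(d)(M,N)$ is an infimum over all such decompositions, this immediately gives $W_p(d)(M,N) \leq d(M,N)$.

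For the lower bound, I would invoke the Krull-Remak-Schmidt-Azumaya Theorem (Theorem~\ref{thm:krsa}). Since $M$ has a local endomorphism ring, it is indecomposable (by the lemma preceding Theorem~\ref{thm:krsa}), and the same holds for $N$. Consequently, in the KRSA decomposition of $M$ into summands with local endomorphism rings, there is only one summand, namely $M$ itself, and similarly for $N$. Passing to the matching formulation of Lemma~\ref{lem:Wp}, the indexing sets $A$ and $B$ are singletons, so the infimum reduces to a minimum over exactly two matchings: the bijection pairing $M$ with $N$, contributing cost $d(M,N)$; and the empty matching, contributing cost $\norm{(d(M,0), d(0,N))}_p$. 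The lower bound thus reduces to showing that $\norm{(d(M,0), d(0,N))}_p \geq d(M,N)$.

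The main obstacle is this last inequality. For $p=1$ it is immediate from the triangle inequality for the metric $d$, since $\norm{(d(M,0), d(0,N))}_1 = d(M,0) + d(0,N) \geq d(M,N)$. For $p > 1$ a more delicate argument is needed, since $\norm{\cdot}_p \leq \norm{\cdot}_1$ goes the wrong way; I would expect to leverage the triangle inequality together with an ambient structural property of the metric $d$ on $\cat{A_\ell}$ (e.g.\ some form of monotonicity with respect to the zero object) to conclude. Once this final inequality is in hand, combining the two bounds yields the claimed equality $W_p(d)(M,N) = d(M,N)$.
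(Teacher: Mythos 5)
Your approach matches the paper's own proof step by step: both reduce $W_p(d)(M,N)$, via the Krull--Remak--Schmidt--Azumaya theorem, to the minimum of two quantities, $d(M,N)$ (matching $M$ with $N$) and $\norm{ (d(M,0), d(0,N)) }_p$ (matching both to zero), and both then appeal to the triangle inequality to conclude that the minimum is $d(M,N)$. The point at which you hesitated is exactly where the paper's one-line argument is thin. The triangle inequality gives $d(M,N) \leq d(M,0) + d(0,N) = \norm{(d(M,0),d(0,N))}_1$, but for $p>1$ one has $\norm{\cdot}_p \leq \norm{\cdot}_1$, which runs in the wrong direction, so the needed inequality $\norm{(d(M,0),d(0,N))}_p \geq d(M,N)$ does not follow.

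Moreover, your hope that some additional structural property of $d$ would close the gap cannot succeed in this generality. Take $\cat{P} = (\R,\leq)$ with Lebesgue measure, $d = d_\mu$, and the disjoint interval modules $I=[0,1]$ and $J=[2,3]$, both of which have local endomorphism rings (Lemma~\ref{lem:interval-module}). By Proposition~\ref{prop:interval-module-distance}, $d_\mu(I,J) = \mu(I \symmdiff J) = 2$, while $d_\mu(I,0) = d_\mu(0,J) = 1$; so for $p=2$ the unmatched option costs $\norm{(1,1)}_2 = \sqrt{2} < 2$, giving $W_2(d_\mu)(I,J) \leq \sqrt2 < d_\mu(I,J)$. (This is consistent with the $W_p$ Isometry Theorem, since the diagram-side Wasserstein distance for these two far-apart points also matches both to the diagonal, with cost $\sqrt{2}$.) In short, for $p=1$ your argument is complete and coincides with the paper's; for $p>1$ you have correctly identified a genuine gap, and the lemma in fact appears to fail as stated.
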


\begin{proof}
  Consider $M,N$ with local endomorphism rings or being zero.
  By Definitions \ref{def:Wp} and \ref{def:dp},
  $W_p(d)(M,N) = \min\left( d(M,N), \norm{(d(M,0), d(0,N) )}_p \right) = d_{p}(M,N)$.
\end{proof}

\begin{definition} \label{def:dGammaack}
  Say that a metric $d$
  on $\cat{A_{\ell}}$
  is \emph{$p$-subadditive}
  if
  for any sets $\{M_a\}_{a \in A}$ and $\{N_a\}_{a \in A}$ of objects in $\cat{A_{\ell}}$,
$d(\coprod_{a \in A}M_a,\coprod_{a \in A}N_a) \leq \norm{\{d(M_a,N_a)\}_{a \in A}}_p.$
\end{definition}

\begin{proposition}\label{prop:was-sub}
  $W_p(d)$ is a $p$-subadditive metric on $\cat{A_{\ell}}$.
\end{proposition}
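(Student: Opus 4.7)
The plan is to upper bound $W_p(d)\bigl(\coprod_{a\in A} M_a,\coprod_{a\in A} N_a\bigr)$ by constructing a single admissible matching of indecomposable summands obtained by concatenating (nearly) optimal matchings for each pair $(M_a,N_a)$, and then applying the additivity of the $p$-norm.

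I would begin by assuming that $\|\{W_p(d)(M_a,N_a)\}_{a\in A}\|_p$ is finite, else the bound is trivial. Fix $\eps>0$. For each $a\in A$, pick constants $\eps_a>0$ (to be fixed later), and by Definition~\ref{def:Wp} choose decompositions $M_a\isom\coprod_{k\in K_a} M_{a,k}$ and $N_a\isom\coprod_{k\in K_a} N_{a,k}$ into summands that are either zero or have a local endomorphism ring (padding with zeros so both indexed by the same $K_a$), with
\begin{equation*}
\|\{d(M_{a,k},N_{a,k})\}_{k\in K_a}\|_p \leq W_p(d)(M_a,N_a)+\eps_a.
\end{equation*}
Assembling the direct sums over $a$ gives decompositions $\coprod_a M_a\isom\coprod_{(a,k)} M_{a,k}$ and $\coprod_a N_a\isom\coprod_{(a,k)} N_{a,k}$, and thus one admissible matching for the pair $(\coprod_a M_a,\coprod_a N_a)$.

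Next, I would apply the definition of $W_p(d)$ on the combined decomposition and the $p$-norm gluing lemma of Section~\ref{sec:p-norm} to rewrite the $p$-norm over $(a,k)$ as an iterated $p$-norm, followed by Minkowski:
\begin{align*}
W_p(d)\Bigl(\coprod_a M_a,\coprod_a N_a\Bigr)
 &\leq \|\{d(M_{a,k},N_{a,k})\}_{(a,k)}\|_p\\
 &= \bigl\|\{\,\|\{d(M_{a,k},N_{a,k})\}_{k}\|_p\,\}_{a}\bigr\|_p\\
 &\leq \|\{W_p(d)(M_a,N_a)+\eps_a\}_{a}\|_p\\
 &\leq \|\{W_p(d)(M_a,N_a)\}_{a}\|_p + \|\{\eps_a\}_{a}\|_p.
\end{align*}
It then remains to arrange $\|\{\eps_a\}_a\|_p\leq\eps$ and let $\eps\to 0$.

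The main obstacle is the choice of the $\eps_a$. For $p=\infty$ this is trivial: set $\eps_a=\eps$ uniformly. For $1\leq p<\infty$, the finiteness of the RHS forces the support $A_0:=\{a\in A : W_p(d)(M_a,N_a)>0\}$ to be countable, say $A_0=\{a_1,a_2,\dots\}$; take $\eps_{a_n}=\eps/2^{n/p}$. For $a\in A\setminus A_0$, since the decompositions of $M_a$ and $N_a$ are the same up to isomorphism and reordering by Theorem~\ref{thm:krsa}, one can use the induced identity matching, contributing $0$ to the cost, so we may take $\eps_a=0$ there. This gives $\|\{\eps_a\}_a\|_p\leq\eps$, and letting $\eps\to 0$ completes the proof.
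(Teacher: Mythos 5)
Your strategy is essentially the one in the paper: choose near-optimal decompositions for each pair $(M_a,N_a)$, assemble them into a joint decomposition of $\coprod_a M_a$ and $\coprod_a N_a$, and compare $p$-norms via the gluing lemma of Section~\ref{sec:p-norm} and Minkowski. The paper compresses exactly this into the assertion that $\norm{\{W_p(d)(M_a,N_a)\}_a}_p$ equals an infimum over joint decompositions, and then notes that each such joint decomposition is admissible for the left-hand infimum; you have written out the $\eps$-bookkeeping behind that assertion. In substance the two arguments are the same.

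However, your treatment of $a\in A\setminus A_0$ is not correct as stated. You argue that $W_p(d)(M_a,N_a)=0$ implies, via Theorem~\ref{thm:krsa}, that the indecomposable decompositions of $M_a$ and $N_a$ agree, so that an identity matching has cost $0$. But $d$ is only a symmetric Lawvere (pseudo)metric, so $d(X,Y)=0$ does not give $X\isom Y$; consequently $W_p(d)(M_a,N_a)=0$ does not force $M_a\isom N_a$, and Theorem~\ref{thm:krsa} applies to two decompositions of the \emph{same} object, not to two objects at distance zero. What you actually need in order to set $\eps_a=0$ is that the infimum defining $W_p(d)(M_a,N_a)$ is \emph{attained} when it equals $0$; this holds when $M_a$ and $N_a$ have only finitely many nonzero indecomposable summands (finitely many matchings), but it is not automatic in general. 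A clean repair: replace $A_0$ by $A_1=\{a : M_a\neq 0 \text{ or } N_a\neq 0\}$. If $A_1$ is countable, the same $\eps_{a_n}=\eps/2^{n/p}$ trick disposes of all of it, and the $a\notin A_1$ are trivial. If $A_1$ is uncountable, one needs a further argument for those $a\notin A_0$ where the zero infimum is not attained; to be fair, this subtlety is also left implicit in the one-line reduction given in the paper.
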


\begin{proof}
  Consider $\coprod_{a \in A}M_a$ and $\coprod_{a \in A}N_a$, where $M_a,N_a \in \cat{A_{\ell}}$ for all $a \in A$.
  For the left hand side, $W_p(d)(\coprod_{a \in A}M_a, \coprod_{a \in A}N_a) = \inf \norm{\{d(P_s,Q_s)\}_{s \in S}}_p$, where $\coprod_{a \in A}M_a \isom \coprod_{s \in S}P_s$ and $\coprod_{a \in A}N_a \isom \coprod_{s \in S}Q_s$ with each $P_s$ and $Q_s$ either having a local endomorphism ring or being zero.
  For the right hand side, $\norm{\{W_p(d)(M_a,N_a)\}_{a \in A}}_p = \inf \norm{\{d(P_{a,s},Q_{a,s})\}_{a \in A,s \in B_a}}_p$, where $M_a \isom \coprod_{s \in B_a}P_{a,s}$ and $N_a \isom \coprod_{s \in B_a}Q_{a,s}$ with each $P_{a,s}$ and $Q_{a,s}$ either having a local endomorphism ring or being zero.
  By Theorem~\ref{thm:krsa} each term in the right hand side is a term in the left hand side.
  The result follows.
\end{proof}

\begin{proposition}\label{prop:sub}
  Let $d'$ be a $p$-subadditive metric on $\cat{A_{\ell}}$ that
  is bounded above by $d$
  on objects with local endomorphism rings and zero. Then $d' \leq W_p(d)$.
\end{proposition}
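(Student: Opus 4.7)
The plan is to show $d'(M,N) \leq W_p(d)(M,N)$ by fixing an arbitrary decomposition realizing the infimum in Definition~\ref{def:Wp} and bounding $d'(M,N)$ by the corresponding $p$-norm. Concretely, fix $M,N \in \cat{A_{\ell}}$ and choose any isomorphisms $M \isom \coprod_{a \in A} M_a$ and $N \isom \coprod_{a \in A} N_a$ with each $M_a$ and $N_a$ either zero or having a local endomorphism ring. It will suffice to prove
\begin{equation*}
  d'(M,N) \leq \norm{\{d(M_a,N_a)\}_{a \in A}}_p,
\end{equation*}
since the bound $d' \leq W_p(d)$ then follows by taking the infimum over all such decompositions.

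The first step is to pass from $M$ and $N$ to their decompositions using the isomorphism invariance of $d'$. As noted at the end of Section~\ref{sec:metric}, any metric on an additive category satisfies $d'(M,N) = d'(M',N')$ whenever $M \isom M'$ and $N \isom N'$ (it follows from the triangle inequality together with the vanishing of the metric on isomorphic pairs). Hence $d'(M,N) = d'\bigl(\coprod_a M_a, \coprod_a N_a\bigr)$. Next I would apply the $p$-subadditivity of $d'$ (Definition~\ref{def:dGammaack}) to obtain
\begin{equation*}
  d'\Bigl(\coprod_{a \in A} M_a, \coprod_{a \in A} N_a\Bigr) \leq \norm{\{d'(M_a,N_a)\}_{a \in A}}_p.
\end{equation*}

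Finally I would replace $d'(M_a,N_a)$ by $d(M_a,N_a)$ using the hypothesis that $d'$ and $d$ agree on objects with local endomorphism rings. The only mild subtlety is the possibility that some $M_a$ or $N_a$ is zero; but the zero object has the zero ring as its endomorphism ring, which satisfies the paper's definition of a local ring vacuously, and in any case $d'(0,0) = 0 = d(0,0)$ while $d'(M_a,0) = d(M_a,0)$ by the same agreement applied to the pair $(M_a,0)$. Putting the three steps together yields the displayed inequality, and taking the infimum over decompositions completes the proof. I do not anticipate any serious obstacle: the argument is essentially a one-line unpacking of definitions, structurally parallel to the proof of Proposition~\ref{prop:was-sub}, and the content of the proposition is really that $p$-subadditivity together with agreement on indecomposables is exactly what characterizes $W_p(d)$ from above.
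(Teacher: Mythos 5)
Your proof is correct and follows the paper's argument essentially line for line: pass to the decompositions via isomorphism invariance of $d'$, apply $p$-subadditivity, replace $d'$ by $d$ on the summands using the agreement hypothesis, and take the infimum over decompositions. Your extra care over the case when a summand is zero is a genuine subtlety that the paper's terse proof glosses over; the hypothesis must be read as agreement on all the summands allowed in Definition~\ref{def:Wp} (interval modules and the zero object alike), and your observation that the zero ring satisfies the paper's vacuous definition of a local ring is a reasonable way to make that precise.
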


\begin{proof}
  Let $M,N \in \cat{A_{\ell}}$. Consider Definition~\ref{def:Wp}.
  For each pair of isomorphisms $M \isom \coprod_{a \in A}M_a$ and $N \isom \coprod_{a \in A}N_a$ where each $M_a$ or $N_a$ is either $0$ or has a local endomorphism ring, since $d'$ is $p$-subadditive,
$d'(M,N)
\leq \norm{\{d'(M_a,N_a)\}_{a \in A}}_p$,
  which by assumption is bounded above by
  $\norm{\{d(M_a,N_a)\}_{a \in A}}_p$.
Therefore $d'(M,N) \leq W_p(d)(M,N)$.
\end{proof}

Combining Lemma~\ref{lem:restricted} and Propositions \ref{prop:was-sub} and \ref{prop:sub}, we have the following.

\begin{theorem}[Universal characterization of $W_p(d)$] \label{thm:universal}
  $W_p(d)$ is the largest $p$-subadditive metric on $\cat{A_{\ell}}$ that
  is bounded above by $d$ on objects with local endomorphism rings and zero.
\end{theorem}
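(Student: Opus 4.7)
The plan is to assemble the theorem directly from the three preceding results, which have been arranged precisely so that they split the claim into its constituent parts: (i) $W_p(d)$ restricts to $d$ on objects with local endomorphism rings, (ii) $W_p(d)$ is itself $p$-subadditive, and (iii) any $p$-subadditive metric that agrees with $d$ on objects with local endomorphism rings is pointwise bounded above by $W_p(d)$.

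First, I would note that Lemma~\ref{lem:restricted} shows $W_p(d)$ lies in the class of metrics under consideration, namely those $p$-subadditive metrics on $\cat{A_\ell}$ that agree with $d$ on objects with local endomorphism rings: subadditivity is Proposition~\ref{prop:was-sub}, and agreement with $d$ on the local pieces is exactly the content of Lemma~\ref{lem:restricted}. Hence $W_p(d)$ is an element of the set being maximized over.

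Next, I would invoke Proposition~\ref{prop:sub}: if $d'$ is any other $p$-subadditive metric on $\cat{A_\ell}$ agreeing with $d$ on objects with local endomorphism rings, then $d'(M,N) \leq W_p(d)(M,N)$ for every pair $M,N \in \cat{A_\ell}$. This establishes that $W_p(d)$ is an upper bound for the class, and by the previous step it is attained, so it is the maximum.

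There is no real obstacle here, since all the substantive work is absorbed into Lemma~\ref{lem:restricted} and Propositions~\ref{prop:was-sub} and~\ref{prop:sub}; the only thing one has to do is check that "largest" is meant pointwise, which is the natural ordering on metrics taking values in $[0,\infty]$, and that the comparison in Proposition~\ref{prop:sub} indeed holds for every pair $(M,N)$ rather than just in some aggregate sense. Both are immediate from the statements, so the proof reduces to a one-line citation of the three earlier results.
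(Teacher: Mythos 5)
Your proposal is correct and matches the paper exactly: the paper states the theorem as an immediate consequence of combining Lemma~\ref{lem:restricted}, Proposition~\ref{prop:was-sub}, and Proposition~\ref{prop:sub}, which is precisely the assembly you describe.
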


\section{Algebra and persistence modules} \label{sec:agreement}


In this section we will prove that $W_1(d_{\mu \circ \dim})$ and $d_{\mu \circ \dim}$ are equal for certain persistence modules.
Along the way, we will prove structure theorems for maps from an interval module and maps to an interval module and show that both
monomorphisms and epimorphisms of persistence modules
induce algebraic matchings of direct summands.
 Let $P \subseteq \R$.
  Let $\mu$ be a measure on $P$ such that for all intervals $I$ in $P$, $\mu(I) = \inf P_{>I} - \inf I$, where
$P_{>I} = \{p \in P \ | \ \forall x \in I, x < p\}$ and
$\inf \emptyset = \infty$.

Throughout this section (with the exception of Definition~\ref{def:coherent-bases}), we will restrict $\cat{Vect_K}^{\cat{P}}$ to the full subcategory, $\cat{Vect^P_{ds}}$, whose objects are isomorphic to direct sums of interval modules.
  Recall that $\mu \circ \dim$ is a weight on the persistence modules whose Hilbert functions are integrable.
  It restricts to a weight on $\cat{Vect^P_{ds}}$.
  We obtain a corresponding path metric $d_{\mu \circ \dim}$ on $\cat{Vect^P_{ds}}$.

\subsection{Change of bases} \label{sec:base-change}
In this section we give a change-of-basis lemma that is a main technical ingredient in our proof of induced algebraic matchings and hence of our $W_1$ isometry theorem. To help with the arguments used in that proof, we give two  examples that use this lemma.

\begin{definition} \label{def:coherent-bases}
  Consider $M \in \cat{Vect_K}^{\cat{P}}$.
  For each $a \in P$, let $B_a$ be a basis for $M(a)$.
  Call $\{B_a\}_{a \in P}$ a \emph{set of coherent bases} for $M$ if for all $a \leq b \in P$, $M(a \leq b)$ restricts to a matching of $B_a$ and $B_b$. That is, there is a subset $S \subseteq B_a$ such that $M(a \leq b)|_S$ is one-to-one and has its image in $B_b$ and $M(a \leq b)|_{B_a \setminus S} = 0$.
\end{definition}


We remark that a set of coherent bases for a persistence module is often visualized as a set of intervals called a \emph{barcode}.

\begin{notation} \label{not:leq}
Following~\cite[Definition 9]{Bubenik:2018}, for intervals $I,J \subseteq P$ or corresponding interval modules say that $I \leq J$ if for all $i \in I$ there exists $j \in J$ such that $i \leq j$ and if for all $j \in J$ there exists $i \in I$ such that $i \leq j$.
\end{notation}

\begin{lemma}[Change of basis lemma] \label{lem:base-change}
  Let $M = I \oplus J$, where $I,J$ are interval modules, $I \leq J$ and $I \cap J \neq \emptyset$.
  Let $\{\{e_c\}\}_{c \in I}$ and $\{\{f_c\}\}_{c \in J}$ denote sets of coherent bases for $I$ and $J$, respectively.
  Consider $ke_c + \ell f_c$, where $c \in I \cap J$ and $k,\ell \in K \setminus \{0\}$.
  Then $M$ has a set of coherent bases given by $\{\{k e_c\}\}_{c \in I \setminus J} \cup \{\{k e_c, ke_c + \ell f_c\}\}_{c \in I \cap J} \cup \{\{\ell f_c\}\}_{c \in J \setminus I}$.
\end{lemma}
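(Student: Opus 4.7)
My plan is to verify the two clauses of Definition~\ref{def:coherent-bases} directly: that at each $c \in P$ the prescribed set is a basis of $M(c)$, and that for each $a \leq b$ the structure map $M(a \leq b)$ restricts to a matching between the prescribed bases. I assume tacitly that $k$ and $\ell$ are both nonzero, since otherwise the given generators fail to span $M(c)$ at single points of $I \setminus J$, $J \setminus I$, or at points of $I \cap J$. The first clause is immediate: at $c \notin I \cup J$ we have $M(c) = 0$ and the prescribed set is empty; at $c$ in exactly one of $I$ or $J$, the module $M(c)$ is one-dimensional with obvious basis $\{e_c\}$ or $\{f_c\}$, to which $\{ke_c\}$ or $\{\ell f_c\}$ is related by a nonzero scalar; at $c \in I \cap J$ the change-of-basis matrix from $(e_c, f_c)$ to $(ke_c,\, ke_c+\ell f_c)$ has determinant $k\ell \neq 0$.

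For coherence, recall that the structure map decomposes as $M(a\leq b) = I(a\leq b) \oplus J(a \leq b)$: it sends $e_a$ to $e_b$ exactly when both $a,b\in I$ (and to $0$ otherwise), and similarly for $f_a \mapsto f_b$. I proceed by case analysis on which of the three disjoint pieces $I \setminus J$, $I \cap J$, $J \setminus I$ contains $a$ and which contains $b$, yielding nine a priori combinations for the pair.

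The crux of the proof, and the main obstacle, is the observation that three of these nine combinations cannot occur under the hypotheses $I \leq J$ and $I \cap J \neq \varnothing$. First, $a \in J \setminus I$ with $b \in I$ is impossible: by $I \leq J$ there exists $i \in I$ with $i \leq a$, and then $i \leq a \leq b$ with $i, b \in I$ forces $a \in I$ by the convexity of the interval $I$. This rules out both subcases $b \in I \cap J$ and $b \in I \setminus J$. Second, $a \in I \cap J$ with $b \in I \setminus J$ is impossible: by $I \leq J$ applied to $b \in I$ there exists $j \in J$ with $b \leq j$, and then $a \leq b \leq j$ with $a, j \in J$ forces $b \in J$ by convexity of $J$. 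In each of the six remaining combinations, a direct evaluation of $M(a \leq b)$ on each prescribed basis vector at $a$ lands in the prescribed basis at $b$ or is $0$, yielding the required matching subset $S \subseteq B_a$. The one case that genuinely depends on the diagonal form of the basis is $a \in I \cap J$, $b \in J \setminus I$, where $k e_a \mapsto 0$ while $k e_a + \ell f_a \mapsto 0 + \ell f_b = \ell f_b$, precisely the basis vector at $b$.
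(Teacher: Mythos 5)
Your proof is correct and follows the same underlying strategy as the paper's own: directly verify that the structure maps restrict to a matching of the proposed bases. The paper's proof is far terser, checking only the three evaluations $M(x \leq y)(k e_x)$, $M(y \leq z)(k e_y)$, and $M(y \leq z)(k e_y + \ell f_y)$ for $x \in I \setminus J$, $y \in I \cap J$, $z \in J \setminus I$, and leaving the rest implicit. What your full case analysis adds is the explicit identification of the three impossible combinations; this is exactly where the hypothesis $I \leq J$ and convexity of intervals are doing work, and it is not a throwaway point. If, say, $a \in I \cap J$, $b \in I \setminus J$, and $a \leq b$ were possible, then both $k e_a$ and $k e_a + \ell f_a$ would map to $k e_b$, destroying injectivity of the restricted structure map; and if $a \in J \setminus I$, $b \in I \cap J$, $a \leq b$ were possible, then $\ell f_a \mapsto \ell f_b$, which lies outside the proposed basis at $b$. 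Your remark that $k$ and $\ell$ must implicitly be nonzero is also on target --- the lemma as literally stated is slightly loose there, but the downstream applications in Examples~\ref{ex:one-to-two} and~\ref{ex:two-to-one} do take $k, \ell \neq 0$.
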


\begin{proof}
  It remains to show that the maps $M(c \leq d): M(c) \to M(d)$ restrict to a matching of bases.
  If $I \setminus J \neq \emptyset$ then let $x \in I \setminus J$,
  let $y \in I \cap J$, and if $J \setminus I \neq \emptyset$ then let $z \in J \setminus I$.
  Then $M(x \leq y)(k e_x) = k e_{y}$,
  $M(y \leq z)(k e_{y}) = 0$, and
  $M(y \leq z)(ke_{y} + \ell f_{y}) = \ell f_{z}$.
\end{proof}

\begin{example} \label{ex:one-to-two}
  Consider $f:M \to N$, where $N = N_1 \oplus N_2$, $M, N_1, N_2$ are interval modules, $N_1 \leq N_2 \leq M$, and $M \cap N_1 \neq \emptyset$. Let 
$\{e_c\}_{c \in M}$,
$\{e'_c\}_{c \in N_1}$, and
$\{e''_c\}_{c \in N_2}$ be coherent sets of bases for $M, N_1, N_2$.
Assume that $f(e_c) = ke'_c + \ell e''_c$ for some $c \in M \cap N_1$ where $k,\ell \neq 0$.
It follows that
$f(e_c) = ke'_c + \ell e''_c$ for all $c \in M \cap N_1$ 
and that
$f(e_c) = \ell e''_c$ for all $c \in N_2 \setminus N_1$.

Apply Lemma~\ref{lem:base-change} to write $N$ as the internal direct sum $N_1 \oplus N'_2$, where $N'_2$ has a set of coherent bases given by $\{ke'_c + \ell e''_c\}_{c \in N_1 \cap N_2} \cup \{\ell e''_c\}_{c \in N_2 \setminus N_1}$.
Let $p_1$, $p'_2$ denote the canonical maps to the direct summands in $N_1 \oplus N'_2$ and let
$i_1$, $i'_2$ denote the canonical maps from the direct summands to $N_1 \oplus N'_2$.
Then $f = i'_2p'_2f$.
Since $i_1p_1 + i'_2p'_2 = 1_N$ and $i_1$ is a monomorphism it follows that $p_1f = 0$.
\end{example}

\begin{example} \label{ex:two-to-one}
  Consider $f:M \to N$ where $M= M_1 \oplus M_2$, $M_1, M_2, N$ are interval modules $N \leq M_1 \leq M_2$ and $N \cap M_2 \neq \emptyset$.
Let $\{e_c\}_{c \in N}$, $\{e'_c\}_{c \in M_1}$ and $\{e''_c\}_{c \in M_2}$ be sets of coherent bases for $N, M_1, M_2$.
Assume that $f(e'_c) = ke_c$ for all $c \in M_1 \cap N$, where $k \neq 0$ and 
$f(e''_c) = \ell e_c$ for all $c \in M_2 \cap N$, where $\ell \neq 0$.

Apply Lemma~\ref{lem:base-change} to write $M$ as the internal direct sum $M_1 \oplus M'_2$, where $M'_2$ has a set of coherent bases given by $\{e''_c - \ell k^{-1} e'_c\}_{c \in M_1 \cap M_2} \cup \{e''_c\}_{c \in M_2 \setminus M_1}$.
Then $fi'_2 = 0$, where $i'_2:M'_2 \to M_1 \oplus M'_2$ is the canonical map.
\end{example}

\subsection{Structure theorems} \label{sec:structure}

In this section we give structure theorems for maps out of and into an interval module.

\begin{notation} \label{not:Supset}
Given two intervals $I$ and $J$, write $I \Subset J$ if $I \subset J$ and there exist $a,b \in J$ such that for all $i \in I$, $a < i < b$.
We will also denote this by $J \Supset I$.
\end{notation}

Given a persistence module,
$M = N \oplus \bigoplus_{j=1}^{\infty} M_j$, or 
$M = N \oplus \bigoplus_{j=1}^n M_j$,
let $i_N:N \to M$, $p_N: M \to N$ denote the canonical maps.
Similarly, for all $j$, let $i_j:M_j \to M$ and $p_j:M \to M_j$ denote the canonical maps.
Recall Notations \ref{not:leq} and \ref{not:Supset}.

\begin{theorem}[Structure theorem for maps from an interval module] \label{thm:coker}
  Let $M$ be a direct sum of interval modules (with arbitrary indexing set) and let $I$ be an interval module.
  Given a nonzero map $f\colon I \rightarrow M$, there exists
  an isomorphism 
    $\theta: M \to N \oplus N'$ with
    \begin{enumerate}
    \item $N' = \bigoplus_{j=1}^n M_j$ for some $n \geq 1$, or
    \item $N' = \bigoplus_{j=1}^{\infty} M_j$,
    \end{enumerate}
  \noindent
  such that
  $p_N\theta f = 0$ and
  for all $j$,
  $M_j$ is an interval module with $M_j \leq I$,
  $M_j \cap I \neq \emptyset$,
  $p_j \theta f$ is nonzero,
  and the interval $M_j \cap I$ contains the interval $M_{j+1} \cap I$.

  In the first case,
    $M_1 \Supset M_2 \Supset \cdots \Supset M_n$
    and if $I$ does not have a lower bound then $n=1$.
If $\inf I \in I$ then only the first case can occur.
For the second case, $\lim_{n \to \infty} (\sup M_j) = \inf I$.

  In both cases, $\ker f = I \setminus M_1$.
  In the first case,
\begin{equation*}
  \coker f = N \oplus (M_n \setminus I) \oplus \bigoplus_{j=1}^{n-1} M_j \setminus ((M_j \setminus M_{j+1}) \cap I).
\end{equation*}
\end{theorem}

\begin{proof}
  Assume $M = \coprod_{\alpha \in A} M_{\alpha}$ where $M_{\alpha}$ is an interval module.
  If $p_{\alpha}f$ is nonzero for some $\alpha \in A$
then $M_{\alpha} \cap I \neq \emptyset$ and $M_{\alpha} \leq I$.
Furthermore, there is a set of coherent bases $\{\{e_c\}\}_{c \in I}$ for $I$ and a set of coherent bases $\{\{f_d\}\}_{d \in M_{\alpha}}$ for $M_{\alpha}$.

For each $c \in I$, let $A_c = \{ \alpha \in A \ | \ p_{\alpha}f(e_c) \neq 0\}$.
  By the definition of direct sum, $\abs{A_c} < \infty$.
  If $c \leq d$ then $A_c \supseteq A_d$.
  Let $A' = \bigcup_{c \in I} A_c$.
  Since $A'$ is a directed union of finite sets, $A'$ is countable.

  Since for all $\alpha \in A'$, $M_{\alpha} \leq I$, for each $\alpha,\beta \in A'$, $M_{\alpha} \cap M_{\beta} \neq \emptyset$. Order $A'$ by the right ends of the intervals. That is, $\{M_{\alpha}\}_{\alpha \in A'} = \{M_j\}_{j=1}^{\infty}$ or $\{M_{\alpha}\}_{\alpha \in A'} = \{M_j\}_{j=1}^n$ such that
  for all $j$, $M_j \cap I \supset M_{j+1} \cap I$.

  For all $j$ either $M_{j+1} \leq M_j$ or $M_{j+1} \Subset M_j$.
  In the case that $\{M_{\alpha}\}_{\alpha \in A'} = \{M_j\}_{j=1}^n$,
  whenever $M_{j+1} \leq M_j$,
  we can apply Lemma~\ref{lem:base-change} as in Example~\ref{ex:one-to-two}
  so that we may remove $M_{j+1}$ from our list.
  By induction, we have 
  $M_1 \Supset M_2 \Supset \cdots \Supset M_{n'}$.

If $I$ does not have a lower bound then $M_i \leq I$, $M_j \leq I$ and $M_i \Subset M_j$ leads to a contradiction. 

For each $c \in I$, by the definition of direct sum, $p_j \theta f(c) \neq 0$ for only finitely many $j$. It follows that if $\inf I \in I$ then one has the case of only finitely many $M_j$ and that if one has infinitely many $M_j$ then
$\lim_{n \to \infty}(\sup M_j) = \inf I$.

Finally, $I$ has a set of coherent bases $\{\{e_c\}\}_{c \in I}$ 
and each $M_j$ has a set of coherent bases $\{\{e_{j,c}\}\}_{c \in M_j}$
such that 
for $c \in (M_j \cap I) \setminus (M_{j+1} \cap I)$,
$\theta f(e_c) = e_{1,c} + \cdots + e_{j,c}$.
It follows that $\ker f$ and $\coker f$ are as claimed.
\end{proof}

\begin{corollary} \label{cor:coker}
  Given a short exact sequence $0 \to I \to M \to N \to 0$ with $I$ an interval module and $M$ a finite direct sum of interval modules, it follows that 
  $$W_1(d_{\mu \circ \dim})(M,N) \leq \mu(I).$$
\end{corollary}

\begin{proof}
  Let $f$ denote the given map $I \to M$.
  Apply Theorem~\ref{thm:coker} with $\ker f = 0$.
  We have
  $M \isom N' \oplus \bigoplus_{j=1}^{n} M_j$,
where each $M_j$ is an interval module with $M_j \leq I$, 
and for all $j$, $M_j \cap I \neq \emptyset$ and $M_j \cap I \supset M_{j+1} \cap I$.
Furthermore,
\[
  N \isom N' \oplus (M_n \setminus I) \oplus \bigoplus_{j=1}^{n-1} M_j \setminus ((M_j \setminus M_{j+1}) \cap I).
  \]
  It follows that 
\[    W_1(d_{\mu \circ \dim})(M,N) \leq \mu(M_n \cap I) + \sum_{j=1}^{n-1} \mu((M_j \setminus M_{j+1})\cap I) = \mu(M_1 \cap I) = \mu(I).\qedhere
\]
\end{proof}

In the dual case we have the following.

\begin{theorem}[Structure theorem for maps to an interval module] \label{thm:ker}
  Let $M$ be a direct sum of interval modules and let $I$ be an interval module.
  Given a nonzero map $f: M \to I$, there exists
  an isomorphism
    $\theta: M \to N \oplus \bigoplus_{\alpha \in A} M_{\alpha}$ such that
    $f \theta i_N = 0$ and for all $\alpha \in A$, $I \leq M_{\alpha}$, $M_{\alpha} \cap I \neq \emptyset$, and $f \theta i_{\alpha}$ is nonzero.
    It follows that $\coker f = I \setminus \bigcup_{\alpha \in A} M_{\alpha}$.

If $A$ is finite then $\bigoplus_{\alpha \in A} M_{\alpha} \isom \bigoplus_{j=1}^n M_j$ for some $n \geq 1$, where
$M_1 \Supset M_2 \Supset \cdots \Supset M_n$,
and 
if $I$ does not have an upper bound then $n=1$.
Furthermore
$\coker f = I \setminus M_1$ and
\begin{equation*}
  \ker f = N \oplus (M_n \setminus I) \oplus \bigoplus_{j=1}^{n-1} M_j \setminus ((M_j \setminus M_{j+1}) \cap I).
\end{equation*}
\end{theorem}

\begin{proof}
  Assume $M = \coprod_{\alpha \in B} M_{\alpha}$ where $M_{\alpha}$ is an interval module.
  Let $A  = \{\alpha \in B \ | \ f i_{\alpha} \text{ is nonzero}\}$.
  Let $N = \coprod_{\alpha \in B \setminus A} M_{\alpha}$.
  For all $\alpha \in A$,
  $M_{\alpha} \cap I \neq \emptyset$
  and
  $I \leq M_{\alpha}$.

  Now assume that $A$ is finite.
    Order the elements of $\{M_{\alpha}\}_{\alpha \in A}$
    by their left ends. That is,
  for some $n \geq 1$, we have
  $\{M_j\}_{j=1}^n$ where
  $M_1 \cap I \supset \cdots \supset M_n \cap I$.
For all $j$ either $M_j \leq M_{j+1}$ or $M_j \Supset M_{j+1}$.
Whenever $M_j \leq M_{j+1}$
apply Lemma~\ref{lem:base-change} as in Example~\ref{ex:two-to-one} so that we may remove $M_{j+1}$ from our list.
By induction, we 
obtain 
$M_1 \Supset M_2 \Supset \cdots \Supset M_{n'}$.
If $I$ does not have an upper bound then $I \leq M_i$, $I \leq M_j$ and $M_i \Subset M_j$ leads to a contradiction. 
\end{proof}

\begin{corollary} \label{cor:ker}
  Given a short exact sequence $0 \to M \to N \to I \to 0$, where $I$ is an interval module and $N$ is a
  finite
  direct sum of interval modules, it follows that $W_1(d_{\mu \circ \dim})(M,N) \leq \mu(I)$.
\end{corollary}

\begin{proof}
  Let $f$ denote the given map $N \to I$.
  Apply Theorem~\ref{thm:ker} with $\coker f = 0$.
  We have
  $N \isom N' \oplus \bigoplus_{j=1}^{n} M_j$, 
where each $M_j$ is an interval module with $I \leq M_j$, and
$M_1 \Supset M_2 \Supset \cdots \Supset M_n$. 
Furthermore,
$$
M \isom N' \oplus (M_n \setminus I) \oplus \bigoplus_{j=1}^{n-1} M_j \setminus ((M_j \setminus M_{j+1}) \cap I).
$$
  It follows that 
\[
W_1(d_{\mu \circ \dim})(M,N) \leq \mu(M_n \cap I) + \sum_{j=1}^{n-1} \mu((M_j \setminus M_{j+1})\cap I) = \mu(M_1 \cap I) = \mu(I). \qedhere
\]
\end{proof}

\subsection{Induced algebraic matching} \label{sec:matching}

In this section we show that for monomorphisms and epimorphisms of persistence modules there is an induced algebraic matching of interval modules.

  Say that two intervals $I$ and $J$ have the \emph{same right end} if $\sup I = \sup J$ and $\sup I \in I$ iff $\sup J \in J$.

\begin{theorem}[Induced algebraic matching for monomorphisms] \label{thm:matching-mono}
  Let $f:M \to N$ be a monomorphism between persistence modules with direct-sum decompositions into finitely many interval modules.
  Then there are internal direct sum decompositions $M = \bigoplus_{a \in A} M_a$ and $N = \bigoplus_{a \in A} N_a$ where
  each $M_a$ is either an interval module or zero and
  each $N_a$ is an interval module such that following hold.
For all $a \in A$, if $M_a$ is nonzero then $M_a$ and $N_a$ have the same right end,
  $p'_afi_a: M_a \to N_a$ is a monomorphism, where $i_a:M_a \to M$ and $p'_a:N \to N_a$ are the canonical maps,
for all other interval modules $N_b$ with the same right end as $M_a$, $p'_b fi_a = 0$
and for all other interval modules $M_b$ with the same right end as $N_a$, $p'_afi_b = 0$.
\end{theorem}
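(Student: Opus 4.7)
The plan is to induct on the number of interval summands of $M$. The base case $M = 0$ is trivial: index $A$ by the summands of $N$, with each $M_a = 0$ and $N_a$ the corresponding interval summand, so every $p'_a f i_a : 0 \to N_a$ is vacuously a monomorphism. For the inductive step, pick an interval summand $I$ of $M$ with $\inf I$ minimal, breaking ties in favor of a summand whose infimum is attained. Since $f$ is a monomorphism and $I$ is a direct summand, $f|_I : I \to N$ is a monomorphism. Apply the structure theorem for maps from an interval module (Theorem~\ref{thm:coker}) to $f|_I$ to obtain an isomorphism $\theta : N \xrightarrow{\isom} N' \oplus \bigoplus_{j=1}^n M_j$; finiteness of $n$ follows from Theorem~\ref{thm:krsa} applied to the finite decomposition of $N$. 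The theorem gives $M_j \leq I$, $M_1 \Supset \cdots \Supset M_n$, $p_{N'} \theta f|_I = 0$, and each $p_j \theta f|_I$ nonzero, and since $\ker f|_I = I \setminus M_1 = 0$ we have $I \subseteq M_1$; combined with $M_1 \leq I$ this forces $\sup M_1 = \sup I$ and attainment-compatibility of the upper endpoints. The basis formula in the proof of Theorem~\ref{thm:coker} then shows $p_{M_1} \theta f|_I : I \to M_1$ sends coherent basis vectors to generators of $M_1$, hence is a monomorphism.

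Write $M = I \oplus M''$ with $M'' = \bigoplus_k I_k$ the sum of the remaining summands. The next step is to build an automorphism $\phi$ of $M$ with $\phi|_I = \id_I$ such that $p_{M_1} \theta (f\phi)|_{M''} = 0$. For each $I_k$ with $p_{M_1} \theta f|_{I_k} \neq 0$, Lemma~\ref{lem:interval-nonzero-map} gives $\inf M_1 \leq \inf I_k$, $\sup M_1 \leq \sup I_k$, and $I_k \cap M_1 \neq \emptyset$. Combined with the minimality of $\inf I$, the identity $\sup I = \sup M_1$, and an endpoint analysis that uses the naturality of the monomorphism $f$ at the relevant boundary points to rule out attainment mismatches (for instance, if $I$ attains $\sup I$ but $I_k$ does not, then the naturality square of $f$ at points approaching $\sup I$ forces $f|_{I_k}$ to have zero $M_1$-component, contradicting the assumption), one obtains $I \leq I_k$ and $I \cap I_k \neq \emptyset$. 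Hence Lemma~\ref{lem:interval-nonzero-map} supplies a nonzero map $\psi_k : I_k \to I$. Since $\Hom(I_k, M_1)$ is one-dimensional, scale $\psi_k$ so that $(p_{M_1} \theta f|_I) \circ \psi_k = p_{M_1} \theta f|_{I_k}$. Defining $\phi(x, w) = (x - \sum_k \psi_k(w_k), w)$ for $x \in I$ and $w = (w_k) \in M''$ gives an automorphism of $M$, with inverse $(x', w) \mapsto (x' + \sum_k \psi_k(w_k), w)$, and a direct computation yields $p_{M_1} \theta (f\phi)|_{I_k} = -p_{M_1} \theta f|_I \circ \psi_k + p_{M_1} \theta f|_{I_k} = 0$ for every $k$.

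Set $N'' := N' \oplus \bigoplus_{j \geq 2} M_j$. By construction $(f\phi)|_{M''}$ has image in $\theta^{-1}(N'')$, so $f'' := p_{N''} \theta (f\phi)|_{M''} : M'' \to N''$ is a monomorphism between direct sums of finitely many interval modules, and $M''$ has one fewer summand than $M$. The inductive hypothesis yields matched decompositions $M'' = \bigoplus_{a \in A''} M''_a$ and $N'' = \bigoplus_{a \in A''} N''_a$ with every $p'_a f'' i_a$ a monomorphism. Setting $A = A'' \cup \{*\}$, $M_* = I$, $N_* = M_1$, and rebasing via $\phi$ on $M$ and via $\theta^{-1}$ on $N$ assembles internal direct sum decompositions of $M$ and $N$: the map $p'_{M_1} f i_I$ equals $p_{M_1} \theta f|_I$, which is mono by the first paragraph, and the remaining $p'_a f i_a$ correspond under the rebasing to the inductive $p'_a f'' i_a$ and are therefore mono. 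The main obstacle is the endpoint case analysis furnishing $\psi_k$: boundary configurations in which the infimum or supremum of some interval is unattained require a careful case split, and it is precisely the tie-breaking rule combined with the naturality of $f$ at endpoints that rules out the otherwise problematic cases.
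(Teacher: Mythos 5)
Your proof is correct but takes a genuinely different route from the paper's. The paper proceeds by a direct matrix reduction: it partitions the interval summands of $M$ and $N$ into blocks sharing the same right endpoint (with attainment), orders each block by inclusion, and within each block performs a Gaussian-elimination-style sweep using Lemma~\ref{lem:base-change} (via Examples~\ref{ex:one-to-two} and \ref{ex:two-to-one}) to kill off-diagonal entries in both $N$ and $M$ directions. You instead induct on the number of summands of $M$: you peel off a single summand $I$ with minimal infimum (tie-breaking toward attained infimum), invoke the structure theorem for maps out of an interval (Theorem~\ref{thm:coker}) to decompose $N$ and isolate the matched target $M_1$, construct an automorphism $\phi$ of $M$ that uses scaled nonzero morphisms $\psi_k : I_k \to I$ to annihilate the $M_1$-component of the remaining summands, and then recurse on $M'' \to N''$. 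Both arguments ultimately rest on the same change-of-basis mechanism; your version packages much of the combinatorics inside a prior structural result and a clean induction, whereas the paper exposes the elimination explicitly. One small note: where you appeal to ``the naturality of $f$ at points approaching $\sup I$'' to rule out attainment mismatches, this is unnecessary — the problematic case (say $\sup I \in I$, $\sup I_k = \sup I$, $\sup I_k \notin I_k$) is already excluded by the relation $M_1 \leq I_k$ furnished by Lemma~\ref{lem:interval-nonzero-map}, together with $\sup M_1 = \sup I$ and attainment compatibility between $I$ and $M_1$; the tie-breaking rule at the left endpoint handles the only genuinely delicate case. With that tightening, the argument is sound, and it has the advantage of reusing Theorem~\ref{thm:coker} rather than re-deriving block structure from scratch.
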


\begin{proof}
  Let $M = \bigoplus_{k=1}^m M_k$ and $N = \bigoplus_{j=1}^n N_j$. 
  The map $f$ determines and is determined by the maps $f_{j,k} := p'_j f i_k$, where $i_k: M_k \to M$ and $p'_j: N \to N_j$ are the canonical maps.
  Our proof is by a matrix reduction argument.
  Since $f$ is a monomorphism, for each $M_k$ there exists an $N_j$ with the same right end such that $M_k \subseteq N_j$ and $f_{j,k}$ is nonzero
(see Lemma~\ref{lem:interval-nonzero-map} and  Lemma~\ref{lem:interval-maps}\eqref{it:a}).

  Partition the intervals in $\{M_k\}_{k=1}^m$ and $\{N_j\}_{j=1}^n$ into subsets with the same right end. 
Use this partition to order the $\{M_k\}$ and $\{N_j\}$.
For the $\{M_k\}$ and $\{N_j\}$ with the same right end, order them by reverse-inclusion and inclusion, respectively. 

  Consider one of the blocks $\{M_k\}$, $\{N_j\}$ with the same right end.
  Choose $k_1$ so that $M_{k_1}$ is a largest interval. 
Let $N_{j_1}$ be a smallest one in the block with $f_{j,k_1}$ nonzero.
Apply Lemma~\ref{lem:base-change} iteratively to $N_{j_1}$ and the other $N_j$ in the block for which $f_{j,k_1}$ is nonzero (see Example~\ref{ex:one-to-two}). We obtain a basis for $N$ such that $f_{j_1,k_1}$ is nonzero and $f_{j,k_1}$ is zero for the other $N_j$ in the block.
Reorder the $N_j$ in the block so that $N_{j_1}$ is first. 
Next, apply Lemma~\ref{lem:base-change} iteratively to $M_{k_1}$ and the other $M_k$ in the block for which $f_{j_1,k}$ is nonzero (see Example~\ref{ex:two-to-one}). We obtain a basis for $M$ such that $f_{j_1,k_1}$ is nonzero and $f_{j_1,k}$ is zero for the other $M_k$ in the block.

Now consider a next largest $M_{k_2}$ in the block.
Since $f$ is a monomorphism, there is a smallest $N_{j_2}$ with $j_2 \neq j_1$ such that $f_{j_2,k_2}$ is nonzero. 
Again apply Lemma~\ref{lem:base-change} iteratively to obtain a basis for $N$ such that $f_{j_2,k_2}$ is nonzero and $f_{j,k_2}$ is zero for the $N_j$ in the block with $j \neq j_2$.
Reorder the $N_j$ in the block so that $N_{j_2}$ is second.
Also, apply Lemma~\ref{lem:base-change} iteratively to obtain a basis for $M$ such that $f_{j_2,k_2}$ is nonzero and $f_{j_2,k}$ is zero for the $M_k$ in the block with $k \neq k_2$.
Continue in the same way for the remainder of the $M_k$ in the block.
Repeat for each of the blocks.

For each $M_k$, let $N_k$ be the corresponding direct summand of $N$ obtained by the above procedure.
For the remaining $N_j$, let $M_j=0$.
\end{proof}

\begin{corollary}
  \label{cor:mono}
  Let $f:M \to N$ be a monomorphism between persistence modules with direct-sum decompositions into finitely many interval modules.
  Then $W_1(d_{\mu \circ \dim})(M,N) \leq \int_P \dim(\coker f) \, d\mu$.
\end{corollary}

\begin{proof}
  By Theorem~\ref{thm:matching-mono}, $M = \bigoplus_a M_a$ and $N = \bigoplus_a N_a$ where each $M_a$ is an interval module or zero and each $N_a$ is an interval module, and $f_a := p'_a f i_a$ is a monomorphism.
  Note that $M_a$ and $N_a$ have the same right ends and that $\coker f_a = N_a \setminus M_a$.
  We remark that there may be $b \neq a$ such that $p'_b f i_a$ is nonzero (see Theorem~\ref{thm:coker}).

  By the rank-nullity theorem, $\int_P \dim (\coker f) \, d\mu =
  \int_P (\dim N - \dim M) \, d\mu = \sum_a \int_P (\dim N_a - \dim M_a) \, d\mu = \sum_a \int_P \dim (N_a \setminus M_a) \, d\mu = \sum_a \int_P \dim (\coker f_a) \, d\mu = \sum_a d_{\mu \circ \dim}(M_a,N_a)$.
Therefore $W_1(d_{\mu \circ \dim})(M,N) \leq \int_P \dim (\coker f) \, d\mu$.
\end{proof}

The following is the Matlis dual~\cite[Section 2.5]{Miller:2020c} of Theorem~\ref{thm:matching-mono}, and the result follows by Matlis duality. However, we give an independent, elementary proof.
Say that two intervals $I$ and $J$ have the \emph{same left end} if $\inf I = \inf J$ and $\inf I \in I$ iff $\inf J \in J$.

\begin{theorem}[Induced algebraic matching for epimorphisms]  \label{thm:matching-epi}
  Let $f:M \to N$ be an epimorphism between persistence modules with direct-sum decompositions into finitely many interval modules.
  Then there are internal direct sum decompositions $M = \bigoplus_{a \in A} M_a$ and $N = \bigoplus_{a \in A} N_a$ where
  each $M_a$ is an interval module and
  each $N_a$ is either an interval module or zero such that the following hold.
  For all $a \in A$,
  if $N_a$ is nonzero then $M_a$ and $N_a$ have the same left end,
  $p'_afi_a: M_a \to N_a$ is an epimorphism, where $i_a:M_a \to M$ and $p'_a:N \to N_a$ are the canonical maps,
  for all other interval modules $M_b$ with the same left end as $N_a$, $p'_a f i_b = 0$ and for all other interval modules $N_b$ with the same left end as $M_a$, $p'_b f i_a = 0$.
\end{theorem}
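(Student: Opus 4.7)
The plan is to dualize the argument for Theorem~\ref{thm:matching-mono}, swapping the roles of left and right ends. Write $M = \bigoplus_{k=1}^m M_k$ and $N = \bigoplus_{j=1}^n N_j$ as direct sums of interval modules, and encode $f$ by the matrix of components $f_{j,k} := p'_j f i_k$. Because $f$ is an epimorphism, at the leftmost index of each $N_j$ some $M_k$ must contribute surjectively; by Lemmas~\ref{lem:interval-nonzero-map} and~\ref{lem:interval-maps}\eqref{it:a} this forces the existence of some $k$ with $f_{j,k}$ nonzero, $N_j \subseteq M_k$, and $\inf M_k = \inf N_j$ with matching endpoint-membership. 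Thus every $N_j$ sits in a \emph{block} of intervals sharing its left end that also contains a suitable $M_k$.

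I would partition $\{M_k\}$ and $\{N_j\}$ into blocks by their left ends and carry out a block-by-block matrix reduction. Within a block, list the $N_j$ by reverse inclusion (largest as a set first) and the $M_k$ by inclusion (smallest as a set first). In a fixed block, pick the largest $N_{j_1}$ and, by the observation above, a smallest $M_{k_1}$ in the block with $f_{j_1,k_1}$ nonzero. Apply Lemma~\ref{lem:base-change} iteratively in the manner of Example~\ref{ex:two-to-one} to obtain a new decomposition of $M$ in which $f_{j_1,k_1}$ remains nonzero while $f_{j_1,k} = 0$ for every other $M_k$ in the block. Then apply Lemma~\ref{lem:base-change} iteratively in the manner of Example~\ref{ex:one-to-two} to obtain a new decomposition of $N$ in which $f_{j_1,k_1}$ stays nonzero while $f_{j,k_1} = 0$ for every other $N_j$ in the block. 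Remove $N_{j_1}$ and $M_{k_1}$ from the block, repeat with the next-largest $N_{j_2}$ and a new smallest $M_{k_2}$ with $f_{j_2,k_2}$ nonzero (the epimorphism property applied to the submatrix on the uncleared rows and columns produces such an $M_{k_2}$), and continue until all $N_j$ in the block are paired. Perform this reduction for every block in turn.

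After the reduction, each $N_j$ is matched with a distinct $M_{k(j)}$, and the component $p'_j f i_{k(j)}\colon M_{k(j)} \to N_j$ is a nonzero interval map with $N_{j} \subseteq M_{k(j)}$ and matching left ends, hence an epimorphism by Lemma~\ref{lem:interval-maps}\eqref{it:a}. Every $M_k$ not matched by this procedure contributes to $\ker f$; I enlarge the index set $A$ to include a tag for each such unmatched $M_k$ and set $N_a = 0$ there. The resulting internal direct sum decompositions $M = \bigoplus_{a \in A} M_a$ and $N = \bigoplus_{a \in A} N_a$ satisfy the stated conclusion.

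The main obstacle is the bookkeeping for the iterated changes of basis: each application of Lemma~\ref{lem:base-change} must leave previously cleared entries zero, and the hypotheses $I \leq J$ and $I \cap J \neq \varnothing$ of the Change of Basis Lemma must hold at every step. The ordering within each block is designed precisely for this, since the chosen pivot $M_{k_1}$ is $\leq$-comparable to every other $M_k$ in the block (they share a left end and are ordered by inclusion), and similarly for the $N_j$; the block structure further guarantees that operations inside one left-end block do not disturb the others. I would formalize this as a loop invariant asserting that after processing the first $\ell$ pairs in a block, the submatrix on the remaining rows and columns is unaffected while the cleared entries stay zero, mirroring the analogous invariant implicit in the proof of Theorem~\ref{thm:matching-mono}.
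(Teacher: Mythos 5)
Your proposal matches the paper's proof essentially verbatim: the same partition into blocks by common left end, the same orderings (inclusion on the $M_k$, reverse-inclusion on the $N_j$), the same pivot selection (largest $N_{j_1}$, then smallest $M_{k_1}$ hitting it nonzero), the same two-stage row-then-column cleanup via Lemma~\ref{lem:base-change} following Examples~\ref{ex:two-to-one} and~\ref{ex:one-to-two}, and the same padding-by-zero for the unmatched $M_k$. No substantive differences.
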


\begin{proof}
  Let $M = \bigoplus_{k=1}^m M_k$ and $N = \bigoplus_{j=1}^n N_j$. 
  The map $f$ determines and is determined by the maps $f_{j,k} := p'_j f i_k$,
  where $i_k: M_k \to M$ and $p'_j: N \to N_j$ are the canonical maps.
  Our proof is by a matrix reduction argument.
  Since $f$ is an epimorphism, for each $N_j$ there exists an $M_k$ with the same left end such that $N_j \subseteq M_k$ and $f_{j,k}$ is nonzero
(see Lemma~\ref{lem:interval-nonzero-map} and  Lemma~\ref{lem:interval-maps}\eqref{it:a}).

  Partition the intervals in $\{M_k\}_{k=1}^m$ and $\{N_j\}_{j=1}^n$ into subsets with the same left end. 
Use this partition to order the $\{M_k\}$ and $\{N_j\}$.
For the $\{M_k\}$ and $\{N_j\}$ with the same left end, order them by inclusion and reverse-inclusion, respectively. 

  Consider one of the blocks $\{M_k\}$, $\{N_j\}$ with the same left end.
  Choose $j_1$ so that $N_{j_1}$ is a largest interval. 
Let $M_{k_1}$ be a smallest interval in the block with $f_{j_1,k}$ nonzero.
Apply Lemma~\ref{lem:base-change} iteratively to $M_{k_1}$ and the other $M_k$ in the block for which $f_{j_1,k}$ is nonzero (see Example~\ref{ex:two-to-one}). We obtain a basis for $M$ such that $f_{j_1,k_1}$ is nonzero and $f_{j_1,k}$ is zero for the other $M_k$ in the block.
Reorder the $M_k$ in the block so that $M_{k_1}$ is first. 
Next, apply Lemma~\ref{lem:base-change} iteratively to $N_{j_1}$ and the other $N_j$ in the block for which $f_{j,k_1}$ is nonzero (see Example~\ref{ex:one-to-two}). We obtain a basis for $N$ such that $f_{j_1,k_1}$ is nonzero and $f_{j,k_1}$ is zero for the other $N_j$ in the block.

Now consider a next largest $N_{j_2}$ in the block.
Since $f$ is an epimorphism, there is a smallest $M_{k_2}$ with $k_2 \neq k_1$ such that $f_{j_2,k_2}$ is nonzero. 
Again apply Lemma~\ref{lem:base-change} iteratively to obtain a basis for $M$ such that $f_{j_2,k_2}$ is nonzero and $f_{j_2,k}$ is zero for the $M_k$ in the block with $k \neq k_2$.
Reorder the $M_k$ in the block so that $M_{k_2}$ is second.
Also, apply Lemma~\ref{lem:base-change} iteratively to obtain a basis for $N$ such that $f_{j_2,k_2}$ is nonzero and $f_{j,k_2}$ is zero for the $N_j$ in the block with $j \neq j_2$.
Continue in the same way for the remainder of the $N_j$ in the block.
Repeat for each of the blocks.

For each $N_j$, let $M_j$ be the corresponding direct summand of $M$ obtained by the above procedure.
For the remaining $M_k$, let $N_k=0$.
\end{proof}

\begin{corollary} \label{cor:epi}
  Let $f:M \to N$ be an epimorphism between persistence modules with direct-sum decompositions into finitely many interval modules.
  Then $W_1(d_{\mu \circ \dim})(M,N) \leq \int_P \dim(\ker f) \, d\mu$.
\end{corollary}

\begin{proof}
  By Theorem~\ref{thm:matching-epi}, $M = \bigoplus_a M_a$ and $N = \bigoplus_a N_a$ where each $M_a$ is an interval module and each $N_a$ is an interval module or zero, and $f_a := p'_a f i_a$ is an epimorphism.
  Note that $M_a$ and $N_a$ have the same left ends and that $\ker f_a = M_a \setminus N_a$.
  We remark that there may be $b \neq a$ such that $p'_a f i_b$ is nonzero (see Theorem~\ref{thm:ker}).

  By the rank-nullity theorem, $\int_P \dim (\ker f) \, d\mu =
  \int_P (\dim M - \dim N) \, d\mu = \sum_a \int_P (\dim M_a - \dim N_a) \, d\mu = \sum_a \int_P \dim (M_a \setminus N_a) \, d\mu = \sum_a \int_P \dim( \ker f_a) \, d\mu = \sum_a d_{\mu \circ \dim}(M_a,N_a)$.
Therefore $W_1(d_{\mu \circ \dim})(M,N) \leq \int_P \dim (\ker f) \, d\mu$.
\end{proof}

\subsection{The $W_1$ isometry theorem} \label{sec:W1-isometry}

In this section we prove a $W_1$ isometry theorem, first in the finite case and then in the general case. The main ingredients are the induced algebraic matching theorems of the previous section.


\begin{proposition} \label{prop:metrics-same-geq}
  Let $M,N \in \cat{Vect^P_{ds}}$.
  Then $d_{\mu \circ \dim}(M,N) \leq W_1(d_{\mu \circ \dim})(M,N)$.
\end{proposition}

\begin{proof}
  We need to show that  $d_{\mu \circ \dim}(M,N) \leq \inf \norm{ \{d_{\mu \circ \dim}(M_a,N_a)\}_{a \in A}}_1$,
where the infimum is taken over all isomorphisms $M \isom \coprod_{a \in A} M_a$ and $N \isom \coprod_{a \in A} N_a$, where each $M_a$ and $N_a$ is either $0$ or an interval module.

Let $M \isom \coprod_{a \in A} M_a$ and $N \isom \coprod_{a \in A} N_a$, where each $M_a$ and $N_a$ is either $0$ or an interval module.
  For each $a \in A$, since $M_a$ and $N_a$ are either zero or an interval module, there is a zigzag $\gamma_a$ of interval modules from $M_a$ to $N_a$ of length at most two with cost $d_{\mu \circ \dim}(M_a,N_a)$.
Add identity maps to these zigzags so that they are all of the form $\cdot \to \cdot \from \cdot \to \cdot \from \cdot$.
  By taking the direct sum of the maps in these zigzags, we obtain a zigzag in $\cat{Vect^P_{ds}}$ from $M$ to $N$. Since the kernel and cokernel of a direct sum is the direct sum of the kernels and cokernels, respectively, the cost of this zigzag equals the sum of the costs of the zigzags $\gamma_a$.
  The result follows.
\end{proof}

Say that a persistence module $M$ \emph{has finite total persistence} if $\dim(M)$ is integrable, that is $\int_P \dim(M) \, d\mu < \infty$.

  \begin{remark}
    This condition can be weakened substantially using primary decomposition~\cite{thomas:thesis,Miller:2020}.
  \end{remark}

\begin{theorem}[$W_1$ isometry theorem] \label{thm:metrics-same}
  Let $M,N \in \cat{Vect^P_{ds}}$ such that each has finite total persistence.
  Then $W_1(d_{\mu \circ \dim})(M,N) = d_{\mu \circ \dim}(M,N)$.
\end{theorem}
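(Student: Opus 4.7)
My plan is to deduce the general $W_1$ isometry theorem from the finite-type version (Theorem~\ref{thm:metrics-same-finite}) by a standard approximation argument, combined with Lemma~\ref{lem:finite-approximation} and the triangle inequality for both $d_\mu$ and $W_1(d_\mu)$.

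First, the lower bound $d_\mu(M,N) \leq W_1(d_\mu)(M,N)$ is already supplied by Proposition~\ref{prop:metrics-same-geq}, and it makes no finiteness assumption, so nothing more is needed there. The work is in the reverse inequality $W_1(d_\mu)(M,N) \leq d_\mu(M,N)$.

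Fix $\eps > 0$. Since $M$ and $N$ each decompose as a direct sum of interval modules and each has finite total persistence, Lemma~\ref{lem:finite-approximation} supplies direct sums of finitely many interval modules $M'$ and $N'$ with
\begin{equation*}
d_\mu(M,M') < \eps, \quad W_1(d_\mu)(M,M') < \eps, \quad d_\mu(N,N') < \eps, \quad W_1(d_\mu)(N,N') < \eps.
\end{equation*}
Both $M'$ and $N'$ have finite total persistence (each is a finite direct sum of interval modules coming from the truncation of a module with finite total persistence), so Theorem~\ref{thm:metrics-same-finite} applies and gives $W_1(d_\mu)(M',N') = d_\mu(M',N')$.

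Now I assemble the bound via two triangle inequalities. On the $W_1$ side,
\begin{equation*}
W_1(d_\mu)(M,N) \leq W_1(d_\mu)(M,M') + W_1(d_\mu)(M',N') + W_1(d_\mu)(N',N) < 2\eps + W_1(d_\mu)(M',N').
\end{equation*}
Then using Theorem~\ref{thm:metrics-same-finite} followed by the triangle inequality for $d_\mu$,
\begin{equation*}
W_1(d_\mu)(M',N') = d_\mu(M',N') \leq d_\mu(M',M) + d_\mu(M,N) + d_\mu(N,N') < 2\eps + d_\mu(M,N).
\end{equation*}
Combining, $W_1(d_\mu)(M,N) < d_\mu(M,N) + 4\eps$. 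Since $\eps > 0$ was arbitrary, the inequality $W_1(d_\mu)(M,N) \leq d_\mu(M,N)$ follows, completing the proof.

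I do not expect any real obstacle here: the only subtle point is that the approximants $M'$ and $N'$ need to simultaneously be close in both metrics, and Lemma~\ref{lem:finite-approximation} is precisely tailored to provide this. The delicate technical content of the isometry theorem is therefore already absorbed into Theorem~\ref{thm:metrics-same-finite} and Lemma~\ref{lem:finite-approximation}; the present theorem is then a short limiting argument.
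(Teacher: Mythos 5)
Your proof is essentially the same as the paper's: approximate $M$ and $N$ by finite-type modules via Lemma~\ref{lem:finite-approximation}, invoke the finite-type isometry theorem, and close by two triangle inequalities. The only cosmetic difference is that you obtain the inequality $d_\mu(M,N) \leq W_1(d_\mu)(M,N)$ directly from Proposition~\ref{prop:metrics-same-geq} rather than repeating the approximation argument in the reverse direction, which is a slight streamlining.
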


\begin{proof}
  For simplicity, denote $d_{\mu \circ \dim}$ by $d$.
    By Proposition~\ref{prop:metrics-same-geq}, we have that
  $W_1(d)(M,N)) \geq d(M,N)$.
  So, it remains to show that 
  $W_1(d)(M,N)) \leq d(M,N)$.

  Let $\eps > 0$.
  By definition, there exists a zigzag $\gamma$ from $M$ to $N$ given by
  \[M = M_0 \xto{f_1} M_1 \xfrom{f_2} M_2 \xto{f_3} \cdots \xfrom{f_n} M_n = N\]
such that $\cost_{\mu \circ \dim}(\gamma) < d(M,N) + \frac{\eps}{2}$.
  It follows that $d(M_{i-1},M_i) < \infty$ for all $i=1,\dots,n$.
  If $M_{i-1}$ has finite total persistence and $M_i$ does not then $d(M_{i-1},M_i) = \infty$. 
  Thus we may assume that each $M_i$ has finite total persistence.

By the triangle inequality,
  \begin{equation} \label{eq:w1a}
    W_1(d)(M,N) \leq \sum_{i=1}^n W_1(d)(M_{i-1},M_i).
  \end{equation}
  Let $1 \leq i \leq n$.
  By assumption, we have $M_i \isom \bigoplus_{j=1}^{\infty} I_{i,j}$, where $I_{i,j}$ is an interval module or zero.
  Since $M_i$ has finite total persistence, we may choose $N_i$ such that
  \begin{equation} \label{eq:w1z}
    (\mu \circ \dim)(\bigoplus_{j=N_i+1}^{\infty} I_{i,j}) < \frac{\eps}{8n}.
  \end{equation}
  Let $M'_i$ denote $\bigoplus_{j=1}^{N_i} I_{i,j}$ and 
  let $M''_i$ denote $\bigoplus_{j=N_i+1}^{\infty} I_{i,j}$.
  Let $\iota_i: M'_i \to M_i$ and $\pi_i:M_i \to M'_i$ denote the canonical inclusion and projection maps.

  By the triangle inequality,
  \begin{align} \label{eq:w1b}
    \begin{split}
    W_1(d)(M_{i-1},M_i) &\leq W_1(d)(M_{i-1},M'_{i-1}) + W_1(d)(M'_{i-1},M'_i) + W_1(d)(M'_i,M_i) \\ 
    &< W_1(d)(M'_{i-1},M'_i) + \frac{\eps}{4n}.
  \end{split}
  \end{align}
  Consider the case $f_i:M_{i-1} \to M_i$.
  Let $f'_i:M'_{i-1} \to M'_i$ be given by $f'_i = \pi_i \circ f_i \circ \iota_{i-1}$.
  Since $f'_i$ factors through its image, by the triangle inequality and Corollaries \ref{cor:mono} and \ref{cor:epi},
  \begin{equation} \label{eq:w1c}
    W_1(d)(M'_{i-1},M'_i) \leq (\mu \circ \dim)(\ker f'_i) + (\mu \circ \dim)(\coker f'_i).
  \end{equation}
  Now
  \begin{align} \label{eq:w1d}
    \begin{split}
    (\mu \circ \dim)(\ker f'_i) &\leq (\mu \circ \dim)(\ker(\pi_i \circ f_i))\\
    &\leq (\mu \circ \dim)(\ker f_i) + (\mu \circ \dim)(M''_i)
  \end{split}
  \end{align}
  and
  \begin{align} \label{eq:w1e}
    \begin{split}
    (\mu \circ \dim)(\coker f'_i) &\leq (\mu \circ \dim)(\coker(f_i \circ \iota_{i-1}))\\
    &\leq (\mu \circ \dim)(\coker f_i) + (\mu \circ \dim)(M''_{i-1}).
  \end{split}
  \end{align}  
  Combining \eqref{eq:w1c}, \eqref{eq:w1d}, \eqref{eq:w1e}, and \eqref{eq:w1z} we have,
  \begin{equation} \label{eq:w1f}
    W_1(d)(M'_{i-1},M'_i) <
    (\mu \circ \dim)(\ker f_i) + (\mu \circ \dim)(\coker f_i) + \frac{\eps}{4n}. 
  \end{equation}
  The other case, $f_i:M_i \to M_{i-1}$ is similar and we obtain the same inequality as \eqref{eq:w1f}.
  Combining \eqref{eq:w1a}, \eqref{eq:w1b}, and \eqref{eq:w1f},
  we have
  \begin{equation*}
    W_1(d)(M,N) < \cost_{\mu \circ \dim}(\gamma) + \frac{\eps}{2} < d(M,N) + \eps.
  \end{equation*}
  Therefore $W_1(d)(M,N) \leq d(M,N)$.
\end{proof}

\section{Applications} \label{sec:applications}

We end by applying our distances to a few simple examples.

\subsection{Multiparameter persistence modules}

In this section we consider three examples of two-parameter persistence modules and the distances between them.

\begin{example}
  
Consider the $1$-dimensional simplicial complex $K$ at the top of Figure~\ref{fig:two-param-1}.
Let $P = \{0,1,2,3,4\}^2 \subset \Z^2$ with the usual coordinate-wise partial order and the counting measure $\mu$.
Let $X$ be the $P$-filtration of $K$ given by the vertices $a,b,c$ appearing at $(0,2), (1,1), (2,0)$, respectively, and the edge $e$ appearing at $(3,2)$ and $(2,4)$ and the edge $f$ appearing at $(2,3)$ and $(4,2)$.
See the bottom left of Figure~\ref{fig:two-param-1}.
Let $Y$ be the $P$-filtration of $K$ given by the vertices $a,b,c$ appearing at $(0,2), (1,1), (2,0)$, respectively, and the edge $e$ appearing at
$(2,3)$ and $(4,2)$
and the edge $f$ appearing at
$(3,2)$ and $(2,4)$.
See the bottom right of Figure~\ref{fig:two-param-1}.
Note that the two-parameter persistence modules $H_0(X)$ and $H_0(Y)$ have identical dimension vectors. 

\begin{figure}[ht]
  \begin{tikzpicture}
    \draw [fill, blue, thick] (0.2,0.2) circle (1pt) node(A) {} -- (0.5,0.8) circle (1pt) node(B) {} -- (0.8,0.2) circle (1pt) node(C) {};
   \node [left] at (A) {$a$};
   \node [above] at (B) {$b$};
   \node [right] at (C) {$c$};
   \node [above left] at (0.375,0.4) {$e$};
   \node [above right] at (0.675,0.4) {$f$};
 \end{tikzpicture}

 \begin{tikzpicture}
    \foreach \x in {1,2,3,4}
      \draw[shift={(\x,0)},color=black] (0,0) -- (0,5);
    \foreach \y in {1,2,3,4}
      \draw[shift={(0,\y)},color=black] (0,0) -- (5,0);
      {\draw [fill=yellow!50] (2,3) rectangle (3,4);}
      {\draw [fill=yellow!50] (3,2) rectangle (4,3);}
    \foreach \x in {0,1,2,3,4}
    \foreach \y in {2,3,4}
    \draw[shift={(\x,\y)},fill,blue] (0.2,0.2) circle (1pt);
    \foreach \x in {1,2,3,4}
    \foreach \y in {1,2,3,4}
    \draw[shift={(\x,\y)},fill,blue] (0.5,0.8) circle (1pt);
    \foreach \x in {2,3,4}
    \foreach \y in {0,1,2,3,4}
    \draw[shift={(\x,\y)},fill,blue] (0.8,0.2) circle (1pt);
    \foreach \x in {3,4}
    \foreach \y in {2,3,4}
    \draw[shift={(\x,\y)},fill,blue,thick] (0.2,0.2) -- (0.5,0.8);
    \draw[blue,thick] (2.2,4.2) -- (2.5,4.8);
    \foreach \x in {2,3,4}
    \foreach \y in {3,4}
    \draw[shift={(\x,\y)},fill,blue,thick] (0.8,0.2) -- (0.5,0.8);
    \draw[blue,thick] (4.8,2.2) -- (4.5,2.8);
\end{tikzpicture} \quad \quad
  \begin{tikzpicture}
    \foreach \x in {1,2,3,4}
      \draw[shift={(\x,0)},color=black] (0,0) -- (0,5);
    \foreach \y in {1,2,3,4}
      \draw[shift={(0,\y)},color=black] (0,0) -- (5,0);
      {\draw [fill=yellow!50] (2,3) rectangle (3,4);}
      {\draw [fill=yellow!50] (3,2) rectangle (4,3);}
    \foreach \x in {0,1,2,3,4}
    \foreach \y in {2,3,4}
    \draw[shift={(\x,\y)},fill,blue] (0.2,0.2) circle (1pt);
    \foreach \x in {1,2,3,4}
    \foreach \y in {1,2,3,4}
    \draw[shift={(\x,\y)},fill,blue] (0.5,0.8) circle (1pt);
    \foreach \x in {2,3,4}
    \foreach \y in {0,1,2,3,4}
    \draw[shift={(\x,\y)},fill,blue] (0.8,0.2) circle (1pt);
    \foreach \x in {2,3,4}
    \foreach \y in {3,4}
    \draw[shift={(\x,\y)},fill,blue,thick] (0.2,0.2) -- (0.5,0.8);
    \draw[blue,thick] (4.2,2.2) -- (4.5,2.8);
    \foreach \x in {3,4}
    \foreach \y in {2,3,4}
    \draw[shift={(\x,\y)},fill,blue,thick] (0.8,0.2) -- (0.5,0.8);
    \draw[blue,thick] (2.8,4.2) -- (2.5,4.8);
  \end{tikzpicture}
  \caption{A one dimensional simplicial complex $K$ (top) and a pair of two-parameter filtrations, $X$ (bottom left) and $Y$ (bottom right). The differences between $X$ and $Y$ are highlighted.}
  \label{fig:two-param-1}
\end{figure}

Now consider $Z := X \cap Y$ and $W := X \cup Y$.
$Z$ differs from $X$ and $Y$ in that it has no edges at the indices highlighted in Figure~\ref{fig:two-param-1}.
$W$ differs from $X$ and $Y$ in that it has both edges at the indices highlighted in Figure~\ref{fig:two-param-1}.
The inclusions $Z \xto{i} X \xto{k} W$ and $Z \xto{j} Y \xto{\ell} W$ induce two zigzags from $H_0(X)$ to $H_0(Y)$.
\begin{equation*}
  \begin{tikzcd}[row sep=tiny]
    & H_0(Z) \ar[ld, two heads, "H_0(i)"'] \ar[rd, two heads, "H_0(j)"] & \\
    H_0(X) \ar[dr, two heads, "H_0(k)"'] & & H_0(Y) \ar[dl, two heads, "H_0(\ell)"] \\
    & H_0(W) &
  \end{tikzcd}
\end{equation*}
Let $\gamma$ denote the top zigzag and let $\gamma'$ denote the bottom zigzag.
We have
$\cost_{\mu \circ \dim}(\gamma) = 
\sum_P \dim \ker H_0(i) + \sum_P \dim \ker H_0(j) = 2 + 2 = 4$ and
$\cost_{\mu \circ \dim}(\gamma') = 
\sum_P \dim \ker H_0(k) + \sum_P \dim \ker H_0(\ell) = 2 + 2 = 4$.
In either case, we have $d_{\mu \circ \dim}(H_0(X),H_0(Y)) \leq 4$.

Since $H_0(X)$ and $H_0(Y)$ have identical dimension vectors, along any zigzag from $H_0(X)$ to $H_0(Y)$ any change in the dimension vector must be later undone. Thus, $d_{\mu \circ \dim}(H_0(X),H_0(Y))$ is even.
Since $H_0(X)$ is not isomorphic to $H_0(Y)$, $d_{\mu \circ \dim}(H_0(X),H_0(Y)) \neq 0$.
It remains to show that $d_{\mu \circ \dim}(H_0(X),H_0(Y)) \neq 2$.
Since $H_0(X)$ and $H_0(Y)$ have identical dimension vectors,
this can only happen if there exists
a zigzag of length two from $H_0(X)$ to $H_0(Y)$ with middle vector space $M$
where there exists a unique $p \in P$ where $\dim M(p)$ differs from
$\dim H_0(X)(p) = \dim_1(Y)(p)$ by one and
for all $q \in P$ with $q \neq p$,
$\dim M(q) = H_0(X)(q) = \dim H_0(Y)(q)$.
However, because of the two highlighted indices in Figure~\ref{fig:two-param-1}, there is no such $M$.
Therefore $d_{\mu \circ \dim}(H_0(X),H_0(Y)) = 4$.
\end{example}

\begin{example}
  Consider the simplicial complex $K$ at the top of Figure~\ref{fig:two-param-2}.
  Let $P = [0,5]^2 \subset \R^2$ with the usual coordinate-wise partial order and the Lebesgue measure $\mu$.
  Let $t \in [0,1]$.
Let $X_t$ be the $P$-filtration of $K$ given by the vertices $a,b,c$ appearing at $(2,0), (1,1), (t,2)$, respectively, and the edge $e$ appearing at $(4,3)$ and  the edge $f$ appearing at $(3,4)$.
For $t<1$,
see the bottom left of Figure~\ref{fig:two-param-2},
and for $t=1$,
see the bottom right of Figure~\ref{fig:two-param-2}.
  
\begin{figure}[ht]
  \begin{tikzpicture}
    \draw [fill, blue, thick] (0.2,0.2) circle (1pt) node(A) {} -- (0.5,0.8) circle (1pt) node(B) {} -- (0.8,0.2) circle (1pt) node(C) {};
   \node [left] at (A) {$a$};
   \node [above] at (B) {$b$};
   \node [right] at (C) {$c$};
   \node [above left] at (0.375,0.4) {$e$};
   \node [above right] at (0.675,0.4) {$f$};
 \end{tikzpicture}

   \begin{tikzpicture}
   \path (0,0) rectangle (5.5,5);
   \draw (2,0) -- (2,5) -- (5,5) -- (5,0) -- (2,0);
   \draw (1,1) -- (1,5) -- (5,5) -- (5,1) -- (1,1);
   \draw (0.5,2) -- (0.5,5) -- (5,5) -- (5,2) -- (0.5,2);
   \draw (4,3) -- (4,5) -- (5,5) -- (5,3) -- (4,3);
   \draw (3,4) -- (3,5) -- (5,5) -- (5,4) -- (3,4);
   \node [below left] at (2,0) {$a$}; 
   \node [below left] at (1,1) {$b$};
   \node [below left] at (0.5,2) {$c$};
   \node [below left] at (4,3) {$e$};
   \node [below left] at (3,4) {$f$};
   {\draw [fill=yellow!50] (0.5,2) rectangle (1,5);}
 \end{tikzpicture} \quad \quad
 \begin{tikzpicture}
   \path (0,0) rectangle (5,5);
   \draw (2,0) -- (2,5) -- (5,5) -- (5,0) -- (2,0);
   \draw (1,1) -- (1,5) -- (5,5) -- (5,1) -- (1,1);
   \draw (1,2) -- (1,5) -- (5,5) -- (5,2) -- (1,2);
   \draw (4,3) -- (4,5) -- (5,5) -- (5,3) -- (4,3);
   \draw (3,4) -- (3,5) -- (5,5) -- (5,4) -- (3,4);
   \node [below left] at (2,0) {$a$}; 
   \node [below left] at (1,1) {$b$};
   \node [below left] at (1,2) {$c$};
   \node [below left] at (4,3) {$e$};
   \node [below left] at (3,4) {$f$};
 \end{tikzpicture}
  \caption{A one dimensional simplicial complex $K$ (top) and a pair of two-parameter filtrations, $X_t$ (bottom left) and $X_1$ (bottom right). The difference between $X_t$ and $X_1$ is highlighted.}
  \label{fig:two-param-2}
\end{figure}

Consider the two-parameter persistence modules $M_t := H_0(X_t)$ and $M_1 := H_0(X_1)$.
The inclusion $i: X_1 \incl X_t$ induces a monomorphism $H_0(i) : M_1 \incl M_t$.
Thus, by Definition~\ref{def:dmu}, $d_{\mu \circ \dim}(M_t,M_1) \leq \int_P \dim (\coker H_0(i)) \, d\mu = 3(1-t)$.
By \eqref{eq:bounds}, we also have that $d_{\mu \circ \dim}(M_t,M_1) \geq \int_P (\dim M_t - \dim M_1) \, d\mu = 3(1-t)$.
Therefore $d_{\mu \circ \dim}(M_t,M_1) = 3(1-t)$.
Note that as $t \to 1$, $d_{\mu \circ \dim}(M_t,M_1) \to 0$.
So, in this example the metric $d_{\mu \circ \dim}$ behaves continuously, as we would like. 

Now consider the metrics $W_p(d_{\mu \circ \dim})$, where $1 \leq p \leq \infty$.
Let $[x]$ denote the homology class represented by $x$.
For $t < 1$,
the persistence module $M_t$ is indecomposable.
However, $M_1 \isom A \oplus B$, where $A$ is generated by
$[a]$ and $[b]$
and $B$ is generated by
$[c] - [b]$.
By \eqref{eq:bounds}, we have that $d_{\mu \circ \dim}(M_t,A) \geq \int_P \dim M_t \, d\mu - \int_P \dim A \, d\mu \geq 39 - 29 = 10$ and $d_{\mu \circ \dim}(M_t,B) \geq \int_P \dim M_t \, d\mu - \int_P \dim B \, d\mu \geq 39 - 10 = 29$. We also have that $d_{\mu \circ \dim}(0,A) = \int_P \dim A \, d\mu = 29$, and $d_{\mu \circ \dim}(0,B) = \int_P \dim B \, d\mu = 10$.
Therefore for all $1 \leq p \leq \infty$, $W_p(d_{\mu \circ \dim})(M_t,M_1) \geq \norm{(10,10)}_p \geq 10$, even as $t \to 1$.

Since indecomposability is unstable, the metrics $W_p(d_{\mu \circ \dim})$ are also unstable.
Thus the metric $d_{\mu \circ \dim}$ seems to be a better choice for multiparameter persistence modules then the metrics $W_p(d_{\mu \circ \dim})$.
\end{example}

\begin{example}
  Consider the two-parameter persistence modules $M$, $N$, and $Q$ which are one-dimensional in the left, middle, and right subsets of the plane in Figure~\ref{fig:two-param-3}, respectively, and are zero elsewhere.
  \begin{figure}
    \centering
    \begin{tikzpicture}[scale=0.125]
      \draw [fill=yellow!50, dashed] (20,0) -- (22,0) -- (0,22) -- (0,20) -- (9,11) -- (11,11) -- (11,9) -- (20,0);
      \draw (0,22) -- (0,20) -- (9,11) -- (11,11) -- (11,9) -- (20,0) -- (22,0);
    \end{tikzpicture}
    \quad \quad
    \begin{tikzpicture}[scale=0.125]
      \draw [fill=yellow!50, dashed] (20,0) -- (22,0) -- (0,22) -- (0,20) -- (20,0);
      \draw (0,22) -- (0,20) -- (20,0) -- (22,0);
    \end{tikzpicture}
    \quad \quad
    \begin{tikzpicture}[scale=0.125]
      \draw [draw=none] (0,22) -- (0,20) -- (20,0) -- (22,0);
      \draw [fill=yellow!50, dashed] (9,11) -- (11,11) -- (11,9) -- (9,11);
      \draw (9,11) -- (11,9);
    \end{tikzpicture}
\caption{Middle: a region in the plane whose boundary is a trapezoid. Left: a subset of this region obtained by removing the triangular subregion on the right.}
    \label{fig:two-param-3}
  \end{figure}
    We have a short exact sequence $0 \to M \to N \to Q \to 0$.
    Let $\mu$ denote the Lebesgue measure on $\R^2$.
    In the path metric, $d_{\mu \circ \dim}(M,N)$ equals the area of the triangle in the right of Figure~\ref{fig:two-param-3}.
    However $W_1(d_{\mu \circ \dim})(M,N)$ equals the area of the trapezoid in the middle of Figure~\ref{fig:two-param-3}.
    Thus, $M$ and $N$ are close in the path metric and distant in the Wasserstein metric.

  Which metric is more appropriate may depend on the application. For example, 
    let $X = D^2_1 \amalg D^2_2$ be the disjoint union of two discs. Consider three bifiltrations on $X$.
    In the first, the boundary of the first disc, $\partial D^2_1$, appears on the solid lines in the middle of Figure~\ref{fig:two-param-3} and the remainder of $X$ appears on the dashed line in middle of Figure~\ref{fig:two-param-3}.
    Call this bifiltration $X_1$.
    In the second, $\partial D^2_1$  appears on the solid lines in the left of Figure~\ref{fig:two-param-3} and the remainder of $X$ appears on the dashed line in left of Figure~\ref{fig:two-param-3}.
    Call this bifiltration $X_2$.
    In the third,
    $\partial D^2_1$  appears on the left three solid lines in the left of Figure~\ref{fig:two-param-3},
    $\partial D^2_2$  appears on the right three solid lines in the left of Figure~\ref{fig:two-param-3},
    and all of $X$ appears on the dashed line in left of Figure~\ref{fig:two-param-3}.
    Call this bifiltration $X_3$.
    Then $H_1(X_1) = N$, $H_1(X_2) = M$, and $H_1(X_3) = M$.
    For $X_1$ and $X_2$, $d_{\mu \circ \dim}(M,N)$ seems to give a better answer for their proximity, but for $X_1$ and $X_3$, $W_1(d_{\mu \circ \dim})$ seems to give a better answer for their proximity.
\end{example}

\subsection{Zigzag persistence modules}

Zigzag persistence modules are linear sequences of vector spaces in which the maps are allowed to go in either direction (in a specified pattern).
For example, consider the three following three zigzag persistence modules $L$, $M$, and $N$,
\begin{equation*}
  \begin{array}{ccccccccccc}
    L & = & K & \rightarrow & K & \rightarrow & K & \leftarrow & K & \leftarrow & K \\
M & = & K & \rightarrow & K & \rightarrow & K & \leftarrow & 0 & \leftarrow & 0\\
N & = & 0 & \rightarrow & 0 & \rightarrow & K & \leftarrow & K & \leftarrow & K
  \end{array}
\end{equation*}
where in each case the maps are the identity if possible and are otherwise $0$.
These may be viewed as representations of the following quiver,
\begin{equation} \label{eq:quiver}
\bullet \rightarrow \bullet \rightarrow \bullet \leftarrow \bullet \leftarrow \bullet
\end{equation}
or modules over the corresponding path algebra, or functors from the category \eqref{eq:quiver} to the category of $K$-vector spaces.
The zigzag persistence modules $L$, $M$, and $N$, are indecomposable.
In fact, the indecomposable modules for such linear quivers are exactly the interval modules~\cite{Gabriel:1972}. However, we will show that our distances for this quiver behave differently than for the corresponding ordered quiver $\bullet \rightarrow \bullet \rightarrow \bullet \rightarrow \bullet \rightarrow \bullet$. 

As we did for persistence modules,
we consider the set of objects in the indexing category to be a subset of the integers with the counting measure $\mu$. We then have the corresponding metrics $d_{\mu \circ \dim}$ and $W_p(d_{\mu \circ \dim})$. 
However, unlike for persistence modules, the metrics $W_1(d_{\mu \circ \dim})$ and $d_{\mu \circ \dim}$ are not equal.
Indeed,
there is a surjective map $M\oplus N\rightarrow L$  whose kernel has measure one and 
so $d_{\mu \circ \dim}(M\oplus N, L)=1$.
However, for $W_1(d_{\mu \circ \dim})$ we need to match indecomposables (see Definition~\ref{def:Wp}), so $W_1(d_{\mu \circ \dim})(M \oplus N, L) = d_{\mu \circ \dim}(M,L) + d_{\mu \circ \dim}(N,0) = 2+3 = 5$. 
Which of these metrics is most appropriate will depend on the application.

\subsection*{Acknowledgments}
\label{sec:acknowledgments}

The authors would like to thank the referees whose many comments substantially improved the paper.
The first author would like to acknowledge that
this research was supported by the NSF-Simons Southeast Center for Mathematics and Biology (SCMB) through the grants National Science Foundation DMS1764406 and Simons Foundation/SFARI 594594, and that this material is based upon work supported by, or in part by, the Army Research Laboratory and the Army Research Office under contract/Grant No. W911NF-18-1-0307.




\bibliographystyle{hplain}
\bibliography{phalg}

\end{document}